\date{}
\newtheorem{theorem}{Theorem}
\newtheorem{lemma}{Lemma}
\newtheorem{corollary}{Corollary}
\newtheorem{proposition}{Proposition}
\newtheorem{conjecture}{Conjecture}
\date{}
\newenvironment{proof}[1][\hspace{-1.0ex}]%
{\par\addvspace{1mm}{\it Proof\hspace{1.0ex}{#1}.} }%
{\quad$\Box$\par\addvspace{1mm}}
    \newif\ifNoRemark
    \def\addtheorem#1#2#3#4{ 
    \ifthenelse{\expandafter\isundefined\csname the#2\endcsname}{\newcounter{#2}}{}
    \newenvironment{#1}[1][\global\NoRemarktrue]
     {\par\addvspace{2mm}\noindent 
       \refstepcounter{#2}{\bf #3~\csname the#2\endcsname
      \vphantom{##1}\ifNoRemark.\ \else\ (##1).\fi}\begingroup #4}%
     {\endgroup\par\addvspace{1mm}\global\NoRemarkfalse}
    \expandafter\newcommand\csname b#1\endcsname{\begin{#1}}
    \expandafter\newcommand\csname e#1\endcsname{\end{#1}}
    }
\begin{document}

\title{Every latin hypercube of order $5$ has transversals
\thanks{The research  has been carried out within the framework of a
state assignment of the Ministry of Education and Science of the
Russian Federation for the Institute of Mathematics of the Siberian
Branch of the Russian Academy of Sciences (project no.
FWNF-2022-0017). }}

\author{A. L. Perezhogin$^1$, V. N. Potapov$^1$, S. Yu. Vladimirov$^2$\\
$\quad^1$ Sobolev Institute of Mathematics\\
{pereal@math.nsc.ru, vpotapov@math.nsc.ru}\\
$\quad^2$ Novosibirsk State University\\
{s.vladimirov@g.nsu.ru} }

\maketitle

\begin{abstract}
We prove that for all $n>1$ every latin $n$-dimensional cube of
order $5$ has transversals.  We find all $123$ paratopy classes of
layer-latin cubes of order $5$ with no transversals. For each $n\geq
3$ and  $q\geq 3$ we construct a $(2q-2)$-layer latin
$n$-dimensional cuboid with no transversals.   Moreover, we find all
paratopy classes of nonextendible  and noncompletable latin cuboids
of order $5$.

\end{abstract}

Keywords: transversal, latin square, latin hypercube, permanent of
multidimensional matrix,  nonextendible latin cuboid.

 MSC[2020] 05B15

\section{Introduction}

A {\it latin square} of order $q$ is  a $q\times q$ array of $q$
symbols where each symbol occurs exactly once in every row and in
every column.  An $n$-dimensional array of size $q\times q\times
\cdots\times q$ satisfying the same condition, i.e., every line
contains $q$ different symbols, is called a {\it latin $n$-cube} of
order $q$. Without loss of generality, we will consider latin
squares and cubes under the alphabet $Q_q=\{0,1,\dots,q-1\}$. An
$n$-dimensional array
 $k\times\underbrace{ q\times\dots\times q}_{n-1} $, $k<q$, that
 contains different symbols from $Q_q$ in any line, is called a {\it latin
 $n$-cuboid}. A latin  $(n-1)$-dimensional cube obtained  by
 fixing the first coordinate of this  $n$-cuboid is called  a {\it layer},
  and arrays obtained by fixing any coordinate are called hyperplanes. Layers of  latin
 $n$-cubes may be obtained by fixing an arbitrary coordinate. Layers of a latin
 $n$-cube obtained by fixing the same coordinate are called {\it
 parallel}.


A {\it row-latin rectangle} of size $ k\times q$ is an array of $q$
symbols, where each row contains all $q$ different symbols.  We
define {\it $k$-layer latin $n$-cuboid} of order $q$ as an
$n$-dimensional generalization of row-latin rectangles such that
each of $k$ layers is a latin $(n-1)$-cube of order $q$. If $k=q$,
then a $k$-layer latin $n$-cuboid of order $q$ is called a {\it
layer-latin $n$-cube} of order $q$. If $n=3$ we will use the terms
cube and cuboid without specifying the dimension.

A  {\it transversal} in a latin square of order $q$ is a set of $q$
cells, where each row contains one cell, each column contains one
cell, and there is one cell containing each symbol. We generalize
the definition of transversals for arbitrary arrays of size
$k_1\times k_2\times \cdots \times k_d$. Let
$k=\min\{k_1,\dots,k_d\}$. A {\it transversal} in the array is a set
of $k$ cells such that every hyperplane (layer) contains at most one
cell of the transversal and all symbols in these cells are pairwise
different. The equivalent definitions of transversals  for row-latin
rectangles and latin $n$-cubes were known early (see \cite{Drisko},
\cite{AAT16}, \cite{Wan}).

In 1967,  Ryser \cite{Ryser} conjectured that, when $q$ is odd,
every latin square of order $q$ has a transversal. In 2011, Wanless
\cite{Wan} generalized Ryser's conjecture to latin hypercubes. He
conjectured that

($1$) if $q$ is odd, then for any $n\geq 2$ every latin $n$-cube of
order $q$ has a transversal;

($2$) if $q$ is even, then for any odd $n\geq 3$ every latin
$n$-cube of order $q$ has a transversal.

In \cite{AAT18} Taranenko proved Wanless's conjecture  for $q=4$. In
the present paper we prove Wanless's conjecture  for $q=5$ (Theorem
\ref{PPVth2}). Our result is based on the list of all paratopy
classes of $2$-layer latin cuboids of order $5$. This list is
constructed by  computer-aided calculations (see Section 5). It is
available in the internet data archive
https://zenodo.org/records/10204026 .

 A latin cuboid $C$ of size $k\times
q\times q$, $k<q$, is called {\it extendible} ({\it completable}) if
there exists a latin cuboid $(k+1)\times q\times q$ (a latin cube of
order $q$) that contains $C$. With the help of the  list of all
paratopy classes of $2$-layer latin cuboids of order $5$, we find
all paratopy classes of nonextendible and noncompletable latin
cuboids of order $5$. More results on nonextendible and
noncompletable latin cuboids are available in  \cite{BCMPW} and
\cite{McKW}.

In 1998,  Drisko \cite{Drisko} proved that any {row-latin rectangle}
of size $k\times q$, $k\geq 2q-1$, has a transversal, and that for
any $q>1$ and there exists a {row-latin rectangle} of size
$(2q-2)\times q$ with no transversals. In this paper we prove that
for any  $q\geq 3$ there exists a $(2q-2)$-layer latin $n$-cuboid of
order $q$ with no transversals (Theorem \ref{PPVth1}).

A nonnegative multidimensional matrix  is called  {\it
$t$-stochastic} if it has the same sums of elements in each
$t$-dimensional plane. A $1$-stochastic matrix is called
polystochastic. Latin $n$-cubes one-to-one correspond to the Cayley
tables of $n$-ary quasigroup operations, and graphs of $n$-ary
quasigroup operations one-to-one correspond to polystochastic
$(n+1)$-dimensional $(0,1)$-matrices (see \cite{AAT16}).  Any
layer-latin $n$-cube corresponds to a $2$-stochastic
$(n+1)$-dimensional $(0,1)$-matrix in the same way. It follows
straightforwardly from the definitions (see Theorem \ref{PPVth5}).
However, this correspondence is only injective. Transversals of a
layer-latin $n$-cube correspond to unity diagonals of
$(n+1)$-dimensional matrix. The {\it permanent} of a
multidimensional matrix is the sum of the products of entries over
all diagonals. Thus, the number of unity diagonals  in any
$(0,1)$-matrix is equal to the permanent of the matrix. In
\cite{AAT16} Taranenko generalized Wanless's conjecture in the
following way:

($1'$) the permanent of every polystochastic matrix of odd order
$q$ is positive;

($2'$) the permanent of every polystochastic $n$-dimensional matrix
of even order $q$ is positive if $n$ is even.

Taranenko proved her conjecture in two cases: for $q=3$ and an
arbitrary $n$ \cite{AAT16},  for $q=4$ and $n= 4$ \cite{AAT20}.
  We
construct $2$-stochastic $n$-dimensional matrices of order $q$ with
zero permanent for all $n>3$ and $q\geq 2$ (Theorem \ref{PPVth5}).
Consequently, it is not possible to generalize Taranenko's
conjecture to $2$-stochastic matrices.

In 1975, Stein \cite{Stein} conjectured that any latin square of
even order $q$ has a partial transversal of size $q-1$.  Moreover,
he propagated this conjecture for arrays with weaker  conditions,
namely for  row-latin squares. Stein's conjecture is also known from
\cite{BR}. Recently, Montgomery \cite{Mont} proved an asymptotical
(for sufficiently large order) version of Stein's hypothesis.
 A previous estimation on the length of partial transversals in latin
squares is proved in \cite{KPSY}. In \cite{PS} it is proved that an
analog of Stein's conjecture is false for arrays of size $q\times q$
containing every symbol $q$ times. Recently, it was shown that
almost all latin squares (for the growing order) have a lot of
transversals \cite{EMM, Kwan}. In \cite{BPW},  \cite{AAT18'},  and
\cite{AAT20} other generalizations of transversals in latin squares
and hypercubes are considered.

\section{Main Results}

It is well known that latin $n$-cubes of order $q$ are the same as
graphs of $n$-ary quasigroups of order $q$. More precisely,  any
latin $n$-cube of order $q$ is an array $\{(x_0,x_1,\dots,x_n) :
x_0=f(x_1,\dots,x_n)\}$ where $f:Q_q^n\rightarrow Q_q$ is an $n$-ary
quasigroup operation (or $n$-ary quasigroup for brevity). Let
$(\sigma_0,\sigma_1,\dots,\sigma_n)$ be an ordered set of
permutations of $Q_q$. It is easy to see that if $f$ is an $n$-ary
quasigroup, then
$g(x_1,\dots,x_n)=\sigma_0^{-1}(f(\sigma_1(x_1),\dots,\sigma_n(x_n)))$
is also an $n$-ary quasigroup. Such $n$-ary  quasigroups $g$ and $f$
(and the corresponding  latin $n$-cubes) are called {\it isotopic}.
The set of all latin $n$-cubes isotopic to a fixed latin $n$-cube is
called an {\it isotopy class}.
 Let $\tau$ be a permutation  of coordinates, i.e.,
 $\tau:\{0,1,\dots,n\}\rightarrow \{0,1,\dots,n\}$ is a bijection.
 If $$\{(x_0,x_1,\dots,x_n) :
x_0=f(x_1,\dots,x_n)\}=
\{(x_{\tau(0)},x_{\tau(1)},\dots,x_{\tau(n)}) :
x_0=g(x_1,\dots,x_n)\},$$ then $n$-ary quasigroups $f$ and $g$ are
called {\it parastrophic}. The equivalence class of all parastrophic
and isotopic $n$-ary quasigroups is called  a {\it paratopy class}.
If we are limited to permutations with  $\tau(0)=0$, then we will
use the term {\it incomplete paratopy class}. Two latin $n$-cubes
 are called {\it equivalent} if the corresponding
$n$-quasigroups belong to the same paratopy class. These definitions
are generalized to $k$-layer latin $n$-cuboids straightforwardly.

The number of latin cubes of order $5$ was counted in \cite{MW}, and
the number of paratopy classes of such cubes can be found in
\cite[Table 1]{McKW}.

\begin{proposition}\label{PPVprop1}
There are $2$ paratopy classes of latin squares of order $5$ and
$15$ paratopy classes of latin cubes of order $5$.
\end{proposition}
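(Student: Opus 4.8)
The plan is to reduce both counts to a finite orbit computation under the paratopy group, carried out by hand in the square case and via the computer census already in the literature in the cube case. Recall that two latin $n$-cubes of order $q$ are equivalent exactly when they lie in one orbit of the group $G_{n,q}=S_q\wr S_{n+1}=S_q^{\,n+1}\rtimes S_{n+1}$, whose normal subgroup $S_q^{\,n+1}$ consists of the isotopies (one alphabet permutation per coordinate) and whose quotient $S_{n+1}$ consists of the parastrophies (coordinate permutations). For $q=5$ both this group and the set of latin $n$-cubes are finite ($|G_{2,5}|=(5!)^{3}\cdot 3!$ for squares, $|G_{3,5}|=(5!)^{4}\cdot 4!$ for cubes), so counting paratopy classes is a bounded computation, and the whole proof amounts to performing it.

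For latin squares of order $5$ I would argue directly. Every isotopy class, hence every paratopy class, has a \emph{reduced} representative, i.e.\ a latin square whose first row and first column read $0,1,2,3,4$; there are exactly $56$ such squares (indeed $161280=56\cdot 5!\cdot 4!$). Sorting these $56$ reduced squares into isotopy classes --- for each pair, test whether some triple of permutations preserving reducedness carries one to the other --- yields exactly $2$ classes, and they are already separated by a paratopy invariant such as the number of intercalates ($2\times 2$ latin subsquares), which is $0$ for the Cayley table of $\mathbb Z_5$ and positive for the other class. Since this invariant is paratopy-invariant, no parastrophy can merge the two classes, so there are exactly $2$ paratopy classes. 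This is a short computation, doable by hand or trivially checked by machine.

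For latin cubes of order $5$ the scheme is the same, but the search space is far too large to inspect by hand, so the enumeration is performed exhaustively by computer --- this is the only genuine difficulty, and it is computational rather than conceptual. One generates a system of representatives of the paratopy classes of latin $3$-cubes of order $5$ by isomorph-free (``orderly'') generation, comparing cubes through a canonical form under $G_{3,5}$, for instance via the canonical labelling of an associated vertex-coloured graph built on the $5^{3}$ occupied cells and the $4\cdot 5$ coordinate symbols, coloured so that its automorphism group realises exactly the paratopy action; a naive pass over all latin cubes of order $5$ would be infeasible, so one needs both an efficient generation scheme and a reliable canonical-form routine for the $4$-dimensional paratopy action. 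This is precisely the census carried out by McKay and Wanless \cite{MW}, in which the number of latin cubes of order $5$ is determined, and the induced number of paratopy classes is recorded in \cite[Table 1]{McKW}; it equals $15$. (Equivalently, given that total count from \cite{MW}, one recovers $15$ by an orbit-counting argument over the orders of the autoparatopism groups.) Accordingly I would cite those sources for the value $15$, while establishing the value $2$ for squares directly as above.
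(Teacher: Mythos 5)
Your proposal is correct and takes essentially the same route as the paper, which justifies this proposition purely by appeal to the known enumerations (the count of latin cubes of order $5$ and the $15$ paratopy classes recorded in Table 1 of the McKay--Wanless census \cite{McKW}); your supplementary hand verification of the $2$ classes for squares, via the $56$ reduced squares and the intercalate count as a paratopy invariant separating the cyclic class from the other, is a sound way to make that half self-contained. The only slip is bibliographic: the census of small latin hypercubes is \cite{McKW} (McKay--Wanless), while \cite{MW} is Mullen--Weber, so your attribution of the census to \cite{MW} should be corrected.
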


A latin $n$-cube is called {\it linear} if it is equivalent to the
$n$-cube defined by the equation  $x_0= x_1+x_2+\dots+x_n$. Here and
below, we use $+$ to designate the addition by modulo $q$. Note that
only one paratopy class of latin squares of order $5$ consists of
nonlinear latin squares.

It is easy to verify the following statements:

\begin{corollary}\label{PPVcor1} \quad

$(1)$ Every latin cube of order $5$ has a transversal.

$(2)$ If a latin cube of order $5$ has a nonlinear layer, then it
has another parallel nonlinear layer.
\end{corollary}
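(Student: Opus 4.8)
The plan is to reduce both parts to a finite inspection of the $15$ paratopy classes of latin cubes of order $5$ furnished by Proposition~\ref{PPVprop1}. First I would record that the two properties in question are invariants of the paratopy class. For part~$(1)$: a transversal of a latin $n$-cube $\{(x_0,\dots,x_n):x_0=f(x_1,\dots,x_n)\}$ is exactly an $(n{+}1)$-tuple $(\pi_0,\dots,\pi_n)$ of permutations of $Q_5$ with $\pi_0(t)=f(\pi_1(t),\dots,\pi_n(t))$ for all $t$; replacing $f$ by an isotope $\sigma_0^{-1}(f(\sigma_1(x_1),\dots,\sigma_n(x_n)))$ turns this tuple into $(\sigma_0^{-1}\pi_0,\sigma_1^{-1}\pi_1,\dots,\sigma_n^{-1}\pi_n)$, and permuting coordinates obviously preserves transversals, so having a transversal depends only on the paratopy class. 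For part~$(2)$: linearity of an individual layer (a latin square of order $5$) is preserved by isotopies and by permutations of coordinates, and such maps send a parallel family of layers onto a parallel family of layers of a parastrophic cube; hence the multiset, over the four coordinate directions, of the numbers of nonlinear layers in the respective parallel families is an invariant of the paratopy class. Thus it suffices to treat one representative per class.

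Next I would run the inspection on explicit representatives of the $15$ classes (available from \cite{MW}, \cite[Table~1]{McKW}, and the data archive cited in the introduction). For $(1)$, the linear class $x_0=x_1+x_2+x_3$ admits the transversal $\{(3t,t,t,t):t\in\mathbb{Z}_5\}$, since $t\mapsto 3t$ is a bijection of $\mathbb{Z}_5$; for each of the remaining $14$ representatives a transversal is exhibited by a direct search over triples of permutations of $Q_5$, which is trivial in size. For $(2)$, I would classify each layer of each representative using Proposition~\ref{PPVprop1} (each layer is a latin square of order $5$, so it is linear or nonlinear, decided by testing isotopy to $x_0=x_1+x_2$) and count the nonlinear layers in each of the four parallel families; the claim to be confirmed is that this count is always $0$ or at least $2$, never exactly $1$.

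I expect the main (and essentially only) difficulty to be conceptual rather than computational: there does not seem to be a short structural argument for either statement that bypasses the classification --- in particular, I see no parity identity or counting obstruction forcing the number of nonlinear layers in a parallel family to avoid the value $1$ a priori --- so one really leans on the explicit list of $15$ classes. Once the invariance observations above are in place, both parts are settled by routine finite checks, which is why the statement is phrased as a corollary of Proposition~\ref{PPVprop1}.
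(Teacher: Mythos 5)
Your proposal is correct and matches the paper's (unstated) argument: the paper simply asserts the corollary as "easy to verify" from the classification into $15$ paratopy classes in Proposition~\ref{PPVprop1}, i.e., a finite check of representatives, which is exactly what you do. Your added observations that transversal-existence and the per-direction counts of nonlinear layers are paratopy invariants, together with the explicit transversal $\{(3t,t,t,t)\}$ for the linear class, just make explicit what the paper leaves implicit.
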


An $n$-ary quasigroup $f$ is called {\it reducible} if $f$ is a
composition of a $k$-ary quasigroup $h$ and  an $(n-k+1)$-ary
quasigroup $g$, where $k\geq 2$ and $n-k+1\geq 2$, i.e.,
$$f(x_1,\dots,x_k,x_{k+1},\dots,x_n)=g(h(x_1,\dots,x_k),x_{k+1},\dots,x_n).$$
  An $n$-ary quasigroup $f$ is called {\it permutably reducible} if
it is equivalent to some reducible $n$-ary quasigroup.  The
corresponding to $f$ latin $n$-cube is also called permutably
reducible. It is easy to see that a permutably reducible latin
$n$-cube can be obtained by a composition of two latin cubes with
fewer dimensions.

A latin $(n-k)$-cube obtained by by fixing some $k$ coordinates in
the latin $n$-cube  $C$ is called a {\it retract} of $C$. Parallel
retracts obtained by fixing the first $k$ coordinates in $C$ with
values $z_1,\dots,z_j,\dots, z_k$ and $z_1,\dots,z'_j,\dots, z_k$,
$z_i\in Q_q$, $z_j\neq z'_j$, are called adjacent in $j$th
direction. These adjacent retracts are layers of the retract of $C$
obtained by fixing  $k-1$ coordinates (the first $k$ coordinates
with the exception of $j$th coordinate)  with values
$z_1,\dots,z_{j-1},z_{j+1},\dots, z_k$. By fixing all coordinates
except for two coordinates of a latin $n$-cube, we obtain a latin
square. It is called a {\it square retract} of $n$-cube. A latin
$n$-cube is called {\it sublinear}, if all its square retracts
 are linear.

\begin{proposition}[\cite{KP}, Corollary 2]\label{PPV-KP}
All sublinear latin $n$-cubes of order $5$ are permutably reducible
for $n \geq 4$.
\end{proposition}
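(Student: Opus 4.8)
I would prove the contrapositive: an irreducible latin $n$-cube of order $5$ with $n\geq 4$ necessarily has a non-linear square retract (equivalently, sublinearity forces permutable reducibility). The argument goes by induction on $n$. The starting remark is that a non-linear square retract of any retract of $C$ is in particular a non-linear square retract of $C$; equivalently, sublinearity is hereditary under retracts, so in the inductive step one is free to pass to $(n-1)$-retracts.

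For the inductive step ($n\geq 5$) I would invoke a lifting principle of the type developed by Krotov and Potapov: an irreducible $n$-ary quasigroup with $n\geq 4$ all of whose retracts obtained by fixing a single coordinate are permutably reducible must be ``semilinear'', i.e.\ paratopic to a cube assembled from the linear cube $x_0=x_1+\dots+x_n$ over $\mathbb{Z}_5$ by a rigidly constrained isotopy. Hence either some $(n-1)$-retract of $C$ is itself irreducible --- then it is an irreducible latin $(n-1)$-cube of order $5$, and since $n-1\geq 4$ the induction hypothesis already supplies a non-linear square retract of $C$ --- or $C$ is semilinear, and one then checks directly that a semilinear latin $n$-cube of order $5$ exhibits a non-linear square retract. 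This second check is where the hypothesis that the order is the prime $5$, with $\operatorname{Aut}(\mathbb{Z}_5)\cong\mathbb{Z}_4$, is spent: it pins down the small family of semilinear cubes and shows that the ``twist'' cannot be hidden inside a single coordinate without surfacing two-dimensionally. Everything is thereby reduced to the base case $n=4$.

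The base case $n=4$ is the real work, and it cannot be reached inductively, because the ternary retracts of a sublinear latin $4$-cube of order $5$ need not be reducible --- irreducible sublinear latin cubes of order $5$ do exist, which is exactly why the statement fails at $n=3$. Here I would argue directly with a sublinear $C$. Each of the $\binom{4}{2}=6$ families of square retracts is a linear latin square of order $5$, so over $\mathbb{Z}_5$ it can be written $x_0=\sigma(\alpha(x_i)+\beta(x_j))$ for suitable permutations; the content of sublinearity lies in how the factor attached to a given coordinate in one such retract relates to the factor attached to the same coordinate in an overlapping retract, and one shows that these agree up to an element of $\operatorname{Aut}(\mathbb{Z}_5)$ and a translation. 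Propagating these compatibility relations around all six coordinate pairs should leave only two possibilities: either all coordinate permutations linearize simultaneously, so that $C$ is linear over $\mathbb{Z}_5$ and hence reducible, or the obstruction localizes to one pair $\{i,j\}$ and displays $C$ as a composition $g\bigl(h(x_i,x_j),x_k,x_l\bigr)$, again reducible.

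The principal obstacle is precisely this last step: certifying that the overlapping-retract constraints in dimension $4$ admit no ``twisted'' solution (and, in the same vein, disposing of the semilinear family). I expect to handle it by a finite analysis of the admissible transition permutations between adjacent linear layers; concretely, one can use Proposition \ref{PPVprop1} to enumerate the sublinear ternary latin cubes of order $5$ up to paratopy and then determine which quintuples of them can occur together as the layers of a single latin $4$-cube in all four coordinate directions at once. Should a purely human argument become unwieldy at that point, this enumeration is naturally amenable to a computer-assisted verification, in the spirit of the other computational components of the paper.
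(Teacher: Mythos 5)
The paper does not prove Proposition \ref{PPV-KP} at all: it is imported verbatim from \cite{KP} (Corollary~2 there), so there is no internal argument to measure your attempt against; the honest benchmark is the Krotov--Potapov proof itself. Your outline does capture the general architecture of that line of work: reduce the arity via a structure theorem for irreducible $n$-ary quasigroups all of whose $(n-1)$-ary retracts are reducible, observe that the exceptional class cannot occur at order $5$, and settle a small-arity base case by analysing how the linear square retracts fit together. Your preliminary observations are also sound: sublinearity is hereditary under retracts, and the induction genuinely cannot be pushed down to $n=3$ (consistent with the paper handling $n=2,3$ by exhaustive search in Corollary \ref{latinCor1} rather than via reducibility).

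As a proof, however, the proposal has gaps at exactly the two load-bearing steps. First, the ``lifting principle'' driving the inductive step is invoked as a black box; it is itself a deep theorem (essentially the content of \cite{KP} and its predecessors), and you give neither a proof sketch nor a precise statement of what ``semilinear'' means here. In the standard formulation, semilinear quasigroups are built over a group with a central involution and therefore have even order --- that vacuousness, not the $\mathrm{Aut}(\mathbb{Z}_5)\cong\mathbb{Z}_4$ computation you gesture at, is why the exceptional case disappears for order $5$; with your looser ad hoc definition the required ``direct check'' cannot be evaluated. Second, the base case $n=4$, which you yourself identify as ``the real work,'' is not carried out: ``propagating these compatibility relations \ldots should leave only two possibilities'' and ``I expect to handle it by a finite analysis \ldots amenable to a computer-assisted verification'' are statements of intent rather than arguments, and the proposed enumeration via Proposition \ref{PPVprop1} of which quintuples of ternary cubes can cohere into a $4$-cube is only a plan. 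What you have is a plausible reconstruction of the shape of the Krotov--Potapov argument, not a proof of the proposition.
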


\begin{proposition}[\cite{AAT16}, Proposition 13]\label{PPV-AAT}
Let a latin $n$-cube $f$ be permutably reducible, i.e., $f$ is
equivalent to the composition of $g$ and $h$. If $g$ and $h$ have
transversals, then $f$ has a transversal.
\end{proposition}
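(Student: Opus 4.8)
The plan is to reduce to the genuinely reducible case and then build a transversal of $f$ explicitly out of those of $g$ and $h$. First I would record the observation that a transversal of a latin $m$-cube, read off from its $(m+1)$-dimensional graph, is precisely a diagonal of that graph: a set of $q$ tuples that covers every value in each of the $m+1$ coordinates (the first coordinate being the symbol). This description is symmetric in all $m+1$ coordinates and unchanged by relabelling alphabets, so ``has a transversal'' is an invariant of the paratopy class. Consequently it is enough to treat the case where $f$ itself is reducible, say $f(x_1,\dots,x_n)=g\bigl(h(x_1,\dots,x_k),x_{k+1},\dots,x_n\bigr)$ with $h$ a $k$-ary quasigroup and $g$ an $(n-k+1)$-ary quasigroup.

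Next I would rephrase the hypotheses in the language of permutations. A transversal of the $k$-ary quasigroup $h$ amounts to permutations $\tau_1,\dots,\tau_k$ of $Q_q$ such that $t\mapsto h(\tau_1(t),\dots,\tau_k(t))$ is again a permutation, call it $\rho$. Likewise, indexing the arguments of $g$ by $0,k+1,k+2,\dots,n$ (the $0$th slot being the value supplied by $h$), a transversal of $g$ amounts to permutations $\mu_0,\mu_{k+1},\dots,\mu_n$ of $Q_q$ for which $s\mapsto g(\mu_0(s),\mu_{k+1}(s),\dots,\mu_n(s))$ is a permutation $\nu$.

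Then comes the construction. Put $\pi_i=\tau_i\circ\rho^{-1}\circ\mu_0$ for $i=1,\dots,k$ and $\pi_i=\mu_i$ for $i=k+1,\dots,n$; each $\pi_i$ is a permutation of $Q_q$, being a composition of permutations. By the defining property of the $\tau_i$ we get $h(\pi_1(t),\dots,\pi_k(t))=\rho\bigl(\rho^{-1}(\mu_0(t))\bigr)=\mu_0(t)$, hence
\[
f(\pi_1(t),\dots,\pi_n(t))=g\bigl(\mu_0(t),\mu_{k+1}(t),\dots,\mu_n(t)\bigr)=\nu(t),
\]
a permutation of $Q_q$. Thus the $q$ cells $(\pi_1(t),\dots,\pi_n(t))$, $t\in Q_q$, hit each hyperplane of the latin $n$-cube exactly once (a hyperplane fixing coordinate $i$ to $v$ contains only the cell for $t=\pi_i^{-1}(v)$) and carry pairwise distinct symbols; that is the required transversal of $f$.

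I expect the only real obstacle to be in the first step: making the ``transversal $=$ diagonal of the graph'' dictionary precise enough that paratopy invariance is visibly correct, in particular for the parastrophies that move the output coordinate into an input slot. Once that is in place, the substitution in the composition step is routine, and the translation of transversals into tuples of permutations is standard.
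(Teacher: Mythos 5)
Your argument is correct and complete: the reduction via paratopy invariance (transversals being diagonals of the $(n+1)$-dimensional graph, a description symmetric in all coordinates and preserved by isotopies) is sound, and the substitution $\pi_i=\tau_i\circ\rho^{-1}\circ\mu_0$ correctly splices a transversal of $h$ into the $0$th slot of a transversal of $g$. Note that the paper itself gives no proof of this proposition --- it is imported from Taranenko's work (\cite{AAT16}, Proposition 13) --- and your proof is the standard composition argument one would expect there, so nothing further is needed.
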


\begin{corollary}\label{latinCor1}
Every sublinear latin $n$-cube of order $5$ contains transversals.
\end{corollary}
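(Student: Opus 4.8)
The plan is to reduce the claim to the already-established facts by induction on the dimension $n$. For the base cases we use the small orders directly: when $n=2$ a sublinear latin square is by definition linear, and Proposition~\ref{PPVprop1} together with Corollary~\ref{PPVcor1}(1) (or the classical fact that the cyclic group $\mathbb Z_5$ has a complete mapping, since $5$ is odd) gives a transversal; when $n=3$ a sublinear latin cube of order $5$ is again covered by Corollary~\ref{PPVcor1}(1), which asserts that \emph{every} latin cube of order $5$ has a transversal. So the only substantive range is $n\geq 4$.

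For $n\geq 4$ the argument is a one-line chaining of the quoted propositions. By Proposition~\ref{PPV-KP}, every sublinear latin $n$-cube of order $5$ with $n\geq 4$ is permutably reducible, so it is equivalent to a composition of a $k$-ary quasigroup $h$ and an $(n-k+1)$-ary quasigroup $g$ with $2\leq k\leq n-1$. The key point I need is that the two factors $g$ and $h$ are themselves sublinear latin cubes of order $5$ of smaller dimension: indeed every square retract of $h$ is obtained by fixing coordinates in $f$ (the inner coordinates plus suitable fixed outer ones) and hence is a square retract of $f$, therefore linear; similarly for $g$, once one observes that fixing the value of the inner block $h(x_1,\dots,x_k)$ to a constant and varying two outer coordinates produces a square retract of $f$. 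Thus both $g$ and $h$ are sublinear of dimension strictly less than $n$, so by the induction hypothesis each has a transversal, and Proposition~\ref{PPV-AAT} then yields a transversal of $f$. Invariance of ``having a transversal'' and of ``being sublinear'' under paratopy lets us pass freely between $f$ and its equivalent reducible form.

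I expect the only real point requiring care — and the part I would write out in full — is the verification that square retracts of the factors $g$ and $h$ are genuinely square retracts of $f$, i.e.\ that sublinearity is inherited by the components of a permutable reduction; the bookkeeping with which coordinates are fixed and which are free is where a sloppy argument could go wrong, especially distinguishing retracts that touch the inner block from those that lie entirely among the outer coordinates. Everything else is an immediate appeal to Propositions~\ref{PPV-KP}, \ref{PPV-AAT} and Corollary~\ref{PPVcor1}, so the corollary really is ``easy to verify'' modulo that inheritance lemma.
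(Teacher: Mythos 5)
Your argument is correct and is essentially the paper's own proof: induction on $n$ with base cases $n=2,3$, Proposition~\ref{PPV-KP} to get permutable reducibility, the observation that the factors inherit sublinearity, and Proposition~\ref{PPV-AAT} to conclude. The only difference is that you spell out the inheritance of sublinearity by the factors (which the paper dismisses with ``by definition''), and that is a legitimate and harmless elaboration.
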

\begin{proof}
Proof is by induction. For $n=2,3$ this statement is true by the
complete search.  Suppose that we prove the corollary for $k\leq n$.
Consider a sublinear latin $(n+1)$-cube $L$ of order $5$. By
Proposition \ref{PPV-KP}, $L$ is a composition of two latin cubes
with fewer dimensions. By definition, these two latin cubes are
sublinear. By induction, these two latin cubes contain transversals.
By Proposition \ref{PPV-AAT}, $L$ contains  transversals.
\end{proof}

 Let $(y^1_1,\dots,y^1_k),\dots,(y^q_1,\dots,y^q_k)$ be
a diagonal, i.e., for every $j=1,\dots,k$ elements $y^1_j,\dots,
y^q_j$ are pairwise different. Consider  {\it retract} $C_i$,
$i=1,\dots,q$, of a latin or layer-latin $n$-cube $C$ obtained  by
fixing the first $k$ coordinates in $C$ with values $y^i_1,\dots,
y^i_k$.
 A
layer-latin $(n-k+1)$-cube consisting of layers $C_1,\dots,C_q$ is
called {\it diagonal latin-layer retract} of $C$. The following
proposition is true by the definition of a transversal.

\begin{proposition}\label{PPVprop4}
Any transversal  of the diagonal latin-layer retract is a
transversal of the hypercube.
\end{proposition}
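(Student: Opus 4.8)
The plan is simply to unwind the definitions, keeping careful track of coordinates. Recall that a cell of the retract $C_i$ sitting at position $(x_{k+1},\dots,x_n)$ is, by definition, the cell of $C$ at position $(y^i_1,\dots,y^i_k,x_{k+1},\dots,x_n)$, and it carries the same symbol. Hence a cell $(i,x_{k+1},\dots,x_n)$ of the diagonal latin-layer retract $D$ (whose layer $i$ is $C_i$) is naturally identified with the cell $(y^i_1,\dots,y^i_k,x_{k+1},\dots,x_n)$ of $C$, and this identification preserves symbols. Under it, a transversal of $D$ becomes a set of $q$ cells of $C$, and the claim is that this set is a transversal of $C$.

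So let $T$ be a transversal of $D$. Since $D$ has $q$ layers and is $(n-k+1)$-dimensional with all sides of length $q$, the set $T$ consists of exactly $q$ cells occupying pairwise distinct hyperplanes in each of the $n-k+1$ coordinate directions, with pairwise distinct symbols. After reordering, write the cells of $T$ as $(i^t,x^t_{k+1},\dots,x^t_n)$ for $t=1,\dots,q$; then $i^1,\dots,i^q$ is a permutation of $\{1,\dots,q\}$ and, for each $j\in\{k+1,\dots,n\}$, the values $x^1_j,\dots,x^q_j$ are pairwise distinct. I would then verify the three transversal conditions for the corresponding cells $(y^{i^t}_1,\dots,y^{i^t}_k,x^t_{k+1},\dots,x^t_n)$, $t=1,\dots,q$, of $C$. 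For a direction $j\le k$ the coordinates are $y^{i^1}_j,\dots,y^{i^q}_j$, which are pairwise distinct because $i^1,\dots,i^q$ is a permutation and, by the hypothesis that the tuples $(y^i_1,\dots,y^i_k)$ form a diagonal, $y^1_j,\dots,y^q_j$ are pairwise distinct. For a direction $j$ with $k<j\le n$ the coordinates are $x^1_j,\dots,x^q_j$, pairwise distinct by the transversal property of $D$. Finally, the $q$ symbols at these cells of $C$ equal the symbols of $D$ at the corresponding cells of $T$, hence are pairwise distinct by the transversal property of $D$. (Injectivity of the identification is automatic: for $t\ne t'$ either $x^t_j\ne x^{t'}_j$ for some $j$, or $i^t\ne i^{t'}$, and in the latter case already $y^{i^t}_1\ne y^{i^{t'}}_1$.) Thus the image of $T$ is a transversal of $C$.

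There is no genuine obstacle here: the statement holds, as the paper says, by the definition of a transversal. The only point requiring attention is the bookkeeping, namely distinguishing the $k$ coordinate directions of $C$ that get absorbed into the single layer direction of $D$ from the remaining $n-k$ directions that pass through unchanged, and invoking the diagonal hypothesis precisely for the absorbed directions while using the transversal property of $D$ for everything else.
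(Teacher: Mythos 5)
Your argument is correct and is exactly the definitional unwinding the paper has in mind: the paper gives no written proof, merely asserting that the proposition "is true by the definition of a transversal," and your bookkeeping (distinct layer indices plus the diagonal hypothesis handling the first $k$ directions, the transversal property of the retract handling the remaining directions and the symbols) supplies precisely the omitted details. No discrepancy to report.
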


Through computer-aided calculations, we  establish

\begin{theorem}\label{latinT1}
Every layer-latin cube of order $5$ with no transversals has no more
than one nonlinear layer.
\end{theorem}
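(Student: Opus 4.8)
The statement is finite in nature --- a layer-latin cube of order $5$ is just an ordered $5$-tuple $(L_0,\dots,L_4)$ of latin squares of order $5$, with $L_i$ the $i$-th layer --- so my plan is to reduce it to a terminating and certifiable computer search. I would first record the concrete meaning of a transversal in these terms: a transversal of $C=(L_0,\dots,L_4)$ is a pair of permutations $\sigma,\tau$ of $Q_5$ for which $i\mapsto L_i(\sigma(i),\tau(i))$ is again a permutation of $Q_5$ (reindexing the five cells by their first coordinate shows every transversal has this shape, and conversely); equivalently, a unity diagonal of the $2$-stochastic $4$-dimensional $(0,1)$-matrix associated with $C$ (Theorem \ref{PPVth5}). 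Testing a single cube is then immediate, e.g.\ by looping over the $(5!)^2$ pairs $(\sigma,\tau)$, or by computing the relevant permanent.

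Next I would enumerate layer-latin cubes of order $5$ up to equivalence. By Proposition \ref{PPVprop1} every layer is paratopic either to the linear (cyclic) square or to the unique nonlinear square $N$. The paratopy group of a layer-latin cube keeps the layer coordinate in place but may permute the layers (via an isotopy of that coordinate) and may apply a common parastrophy and isotopy of the row/column/symbol coordinates to all layers simultaneously; I would use this to normalise $L_0$ to a fixed representative of its paratopy class and then append $L_1,L_2,L_3,L_4$ one layer at a time, doing canonical-form isomorph rejection after each addition relative to the stabiliser of the part built so far. The published list of paratopy classes of $2$-layer cuboids of order $5$ is exactly the output of the first extension step and serves as the seed of this generation. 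A helpful pruning is available: as soon as a partial $k$-layer cuboid has the property that the residual triples of row, column and symbol left by its size-$k$ partial transversals already meet every possible further layer, all of its completions acquire a transversal and the branch can be dropped. At the end one has all paratopy classes of layer-latin cubes of order $5$; discarding those with a transversal leaves a finite list (there are $123$ classes), and the theorem follows by the observation --- read straight off that list --- that every one of them has at most one nonlinear layer. Since a latin cube of order $5$ is a special layer-latin cube and always has a transversal (Corollary \ref{PPVcor1}(1)), no latin cube of order $5$ survives the filtering, which is a built-in consistency check.

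The main difficulty is not any single clever step but guaranteeing that the enumeration is complete and the isomorph rejection sound --- that the layer-by-layer generation together with the paratopy canonicalisation visits each equivalence class exactly once, and that the stabiliser groups used for pruning are correctly computed; this is the sort of claim one backs up with a second, differently organised search and with agreement against known enumeration counts for order $5$. One point that must be handled carefully throughout is that the correspondence in Theorem \ref{PPVth5} is only injective: not every $2$-stochastic $4$-dimensional $(0,1)$-matrix is a layer-latin cube, so the generation must be carried out over tuples of latin squares rather than over $2$-stochastic matrices.
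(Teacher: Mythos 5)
Your proposal is correct and follows essentially the same route as the paper: Theorem \ref{latinT1} is established there by an exhaustive computer search that enumerates all layer-latin cubes of order $5$ without transversals up to paratopy, seeded by the classified $2$-layer cuboids and extended layer by layer with pruning, after which the claim is read off the resulting list of $123$ classes. Your generic canonical-form isomorph rejection and transversal-based pruning differ only in implementation detail from the paper's specific pruning criteria (Corollaries \ref{PPVprop2cube21} and \ref{PPVprop2cube22} on the tables $T(A)$), so the two arguments are substantively the same.
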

We will explain these calculations in detail in Section 5.

\begin{proposition}\label{latinProp1}
If a latin hypercube of order $5$ contains a nonlinear  square
retract, then it has  transversals.
\end{proposition}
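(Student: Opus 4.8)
The plan is to reduce to the three-dimensional situation, which is completely understood. For $n=2$ the hypothesis forces $L$ to be a nonlinear latin square of order $5$; by Proposition~\ref{PPVprop1} there is exactly one such paratopy class, and it has a transversal. So from now on let $n\ge 3$. By Proposition~\ref{PPVprop4} it is enough to find a diagonal latin-layer retract of $L$ which is a layer-latin cube of order $5$ with a transversal, and by Theorem~\ref{latinT1} any layer-latin cube of order $5$ with at least two nonlinear layers has a transversal. Hence it suffices to exhibit a diagonal $y^1,\dots,y^5$ of $Q_5^{\,n-2}$ such that at least two of the square retracts $L|_{x_1=y^i_1,\dots,x_{n-2}=y^i_{n-2}}$ are nonlinear.

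After applying a suitable paratopy I may assume that the nonlinear square retract promised by the hypothesis is $R=L|_{x_1=a_1,\dots,x_{n-2}=a_{n-2}}$, with free coordinates $x_{n-1},x_n$. Let $G\subseteq Q_5^{\,n-2}$ be the set of all $z$ for which $L|_{x_1=z_1,\dots,x_{n-2}=z_{n-2}}$ is nonlinear; so $a=(a_1,\dots,a_{n-2})\in G$. The crucial point is a local thickness property: for every $z\in G$ and every $j\in\{1,\dots,n-2\}$ there is a point $z'\in G$ agreeing with $z$ in all coordinates except possibly the $j$th. Indeed, fixing in $L$ all of the first $n-2$ coordinates except the $j$th at the corresponding values of $z$ produces a latin cube of order $5$ in the coordinates $x_j,x_{n-1},x_n$; its layer $x_j=z_j$ equals $L|_z$ and is nonlinear, so by Corollary~\ref{PPVcor1}(2) some parallel layer $x_j=c$ with $c\ne z_j$ is also nonlinear, and the corresponding point lies in $G$.

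Now start at $a$ and walk through $G$: at step $t=1,\dots,n-2$ invoke local thickness in direction $t$ to move to a point of $G$ whose $t$th coordinate differs from the current one — which at that moment still equals $a_t$, since coordinates $t,\dots,n-2$ have not yet been touched. After the $n-2$ steps we arrive at a point $b\in G$ that differs from $a$ in every one of the first $n-2$ coordinates. Any two points of $Q_5^{\,n-2}$ differing in all coordinates can be completed to a diagonal (fill each of the $n-2$ coordinate columns with an arbitrary permutation of $Q_5$), so take the diagonal $y^1=a$, $y^2=b$, $y^3,y^4,y^5$. The associated diagonal latin-layer retract has layers $C_i=L|_{x_1=y^i_1,\dots,x_{n-2}=y^i_{n-2}}$, each a latin square of order $5$, so it is a layer-latin cube of order $5$; its layers $C_1=L|_a$ and $C_2=L|_b$ are nonlinear, so by Theorem~\ref{latinT1} it has a transversal, and by Proposition~\ref{PPVprop4} so does $L$.

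The main obstacle is the diagonal constraint: Corollary~\ref{PPVcor1}(2) by itself only produces pairs of nonlinear square retracts differing in a single fixed coordinate, and for $n\ge 4$ no two points of $Q_5^{\,n-2}$ differing in just one coordinate lie on a common diagonal. The coordinate-by-coordinate walk is the device that upgrades one nonlinear square retract to a pair differing in all $n-2$ fixed coordinates; the remaining checks — that the walk never stalls (Corollary~\ref{PPVcor1}(2) is unconditional) and that the constructed object is a layer-latin cube, so Theorem~\ref{latinT1} applies (its layers are square retracts of $L$) — are routine.
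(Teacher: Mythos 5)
Your argument is correct and is essentially the paper's own proof: starting from the given nonlinear square retract, you use Corollary~\ref{PPVcor1}(2) to walk one coordinate direction at a time through adjacent nonlinear square retracts until you reach one differing from the original in all of the $n-2$ fixed coordinates, place the two on a common diagonal, and conclude via Theorem~\ref{latinT1} and Proposition~\ref{PPVprop4}. You merely spell out a few details the paper leaves implicit (the $n=2$ case, why Corollary~\ref{PPVcor1}(2) applies to a $3$-dimensional retract, and why the two endpoints extend to a diagonal).
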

\begin{proof} Let $C$ be a latin $n$-cube of order $5$ satisfying
conditions of the proposition. Without loss of generality, we assume
that a nonlinear square retract of $C$ is obtained by fixing all
coordinates except for $n$th and $(n-1)$th coordinates. By Corollary
\ref{PPVcor1} (2), if $C$ contains a nonlinear square retract, then
$C$ contains other
 nonlinear square retracts these are adjacent the first one in any direction.
 Thus we can
find a sequence of $n-1$  nonlinear square retract such that the
$1$st and the $2$nd square retracts are adjacent in the first
direction, the $2$nd and the $3$rd square retracts are adjacent in
the second direction, and so on. Take the first and last square
retracts from this sequence. By the construction, these two square
retracts were obtained from $C$ by fixing $1$st, $2$nd, ...,
$(n-2)$th coordinates with different values. Consider a diagonal
layer-latin retract that contains both of these square retracts as
layers. By Theorem \ref{latinT1}, this diagonal layer-latin square
retract has a transversal. By Proposition \ref{PPVprop4}, $C$ has
transversals.
\end{proof}

\begin{theorem}\label{PPVth2}
Every latin $n$-cube of order $5$ has transversals.
\end{theorem}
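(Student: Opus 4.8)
The plan is to reduce the theorem to a short case analysis on the square retracts of the cube, since all the substantive work has already been packaged into the auxiliary statements above. First I would dispose of the two small dimensions directly: for $n=2$, Proposition \ref{PPVprop1} gives only two paratopy classes of latin squares of order $5$, and both contain a transversal (for the linear square $x_0=x_1+x_2$ the cells with $x_1=x_2$ form a transversal because $2$ is invertible modulo $5$, and the unique nonlinear class is checked directly); for $n=3$ the claim is exactly Corollary \ref{PPVcor1}(1), which comes from the enumeration of the $15$ paratopy classes of latin cubes of order $5$.

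For $n\ge 4$ I would split into two cases according to whether the latin $n$-cube $C$ is sublinear. If every square retract of $C$ is linear, that is, $C$ is sublinear, then $C$ has a transversal by Corollary \ref{latinCor1}; recall that this corollary is itself proved by induction on $n$, the inductive step splitting $C$ as a permutably reducible composition of lower-dimensional sublinear cubes via Proposition \ref{PPV-KP} and recombining their transversals via Proposition \ref{PPV-AAT}. If instead some square retract of $C$ is nonlinear, then $C$ has a transversal by Proposition \ref{latinProp1}. Since every latin $n$-cube falls into exactly one of these two cases, the proof is complete.

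The main obstacle is therefore not in this final assembly but in the ingredients it rests on, chiefly Theorem \ref{latinT1}, which asserts that a layer-latin cube of order $5$ with no transversal has at most one nonlinear layer and is obtained from the computer-generated list of all paratopy classes of $2$-layer latin cuboids of order $5$, then lifted to full layer-latin cubes. The delicate point in combining the ingredients lives inside Proposition \ref{latinProp1}: starting from a single nonlinear square retract, one applies Corollary \ref{PPVcor1}(2) repeatedly to build a chain of $n-1$ successively adjacent nonlinear square retracts whose two endpoints differ in all $n-2$ of the fixed coordinates, so that both endpoints occur as layers of one common diagonal latin-layer retract; that retract then has at least two nonlinear layers, so Theorem \ref{latinT1} forces it to contain a transversal, and Proposition \ref{PPVprop4} lifts the transversal back to $C$. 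I would check carefully that this chain can always be threaded through the directions $1,\dots,n-2$ for every $n\ge 4$, the low-dimensional degeneracies being already absorbed into the base cases.
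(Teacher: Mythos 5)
Your proof is correct and follows essentially the same route as the paper: dispose of $n=3$ via Corollary \ref{PPVcor1}(1), then for $n\ge 4$ split on whether the cube is sublinear, invoking Corollary \ref{latinCor1} in one case and Proposition \ref{latinProp1} in the other. Your explicit treatment of $n=2$ and your commentary on the chain construction inside Proposition \ref{latinProp1} are accurate additions but do not change the argument.
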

\begin{proof} If $n=3$, then a latin $3$-cube of order $5$ has transversals
by Corollary \ref{PPVcor1} (1). Let $n\geq 4$. If a latin $n$-cube
contains a nonlinear square retract, then it has transversals by
Proposition \ref{latinProp1}. If it does not  a nonlinear square
retract, then the latin $n$-cube is sublinear, so it has
transversals by Corollary \ref{latinCor1}. \end{proof}

\section{Layer-latin cubes}

Let $L[\pi,n]$ be a linear latin $n$-cube that is defined by the
equation:
$$L[\pi,n]=\pi x_1+x_2+\dots+x_n.$$

\begin{lemma}\label{PPVlemma1}\quad

$(1)$ A layer-latin $(n+1)$-cube of odd  order $q$ consisting of
layers
$L[\pi_1,n],\dots, L[\pi_q,n]$ does not have transversals, if 
the row-latin square consisting of layers $\pi_1,\dots,\pi_q$ does
not  have a diagonal with zero sum.

$(2)$ A layer-latin $(n+1)$-cube of even order $q$ consisting of
layers
$L[\pi_1,n],\dots, L[\pi_q,n]$ does not have transversals, if 
the row-latin square consisting of layers $\pi_1,\dots,\pi_q$ does
not have a diagonal with a zero sum if $n$ is even and with $q/2$
sum if $n$ is odd.
\end{lemma}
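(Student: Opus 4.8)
The plan is to analyze a putative transversal of the layer-latin $(n+1)$-cube $C$ with layers $L[\pi_1,n],\dots,L[\pi_q,n]$ directly in coordinates, and to show that it would force a structured diagonal in the row-latin square $P$ with layers $\pi_1,\dots,\pi_q$ of exactly the forbidden type. Recall that layer $i$ of $C$ (fixing the first coordinate to $i$) consists of all tuples $(i,x_0,x_1,\dots,x_n)$ with $x_0=\pi_i(x_1)+x_2+\dots+x_n$. A transversal of $C$ is a set of $q$ cells, one in each of the $q$ layers (and meeting every other hyperplane at most once), with pairwise distinct symbols. So a transversal picks, for each $i\in Q_q$, a tuple $(x_1^i,\dots,x_n^i)$ with output symbol $x_0^i=\pi_i(x_1^i)+x_2^i+\dots+x_n^i$; the hyperplane conditions say that for each coordinate $j\in\{0,1,\dots,n\}$ the values $x_j^1,\dots,x_j^q$ are a permutation of $Q_q$, i.e. each column of the $q\times(n+1)$ array $(x_j^i)$ ranges over all of $Q_q$.

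First I would sum the output equation over $i$. Writing the transversal as above and summing $x_0^i=\pi_i(x_1^i)+x_2^i+\dots+x_n^i$ over $i\in Q_q$, the left side is $\sum_{i}x_0^i=0+1+\dots+(q-1)$ since the $x_0^i$ form a permutation of $Q_q$; likewise $\sum_i x_j^i=0+1+\dots+(q-1)$ for each $j\in\{2,\dots,n\}$. Hence
\[
\binom{q}{2}\equiv \sum_{i\in Q_q}\pi_i(x_1^i)+(n-1)\binom{q}{2}\pmod q,
\]
so $\sum_{i\in Q_q}\pi_i(x_1^i)\equiv -(n-2)\binom{q}{2}\pmod q$. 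Now the key observation: as $i$ runs over $Q_q$, the pairs $(i,x_1^i)$ have distinct first coordinates (one per layer) and, because the column $x_1^1,\dots,x_1^q$ is a permutation of $Q_q$, also distinct second coordinates; thus $(x_1^1,\dots,x_1^q)$ read along the rows $i=0,\dots,q-1$ is precisely a diagonal of the row-latin square $P=(\pi_i(\cdot))$, and $\sum_i\pi_i(x_1^i)$ is exactly the sum along that diagonal. Therefore the existence of a transversal of $C$ yields a diagonal of $P$ whose sum is $-(n-2)\binom{q}{2}\bmod q$. The lemma follows by case analysis of $-(n-2)\binom{q}{2}\bmod q$: when $q$ is odd, $\binom{q}{2}=q(q-1)/2\equiv 0\pmod q$, so the forced diagonal sum is $0$, matching part $(1)$; when $q$ is even, $\binom{q}{2}\equiv q/2\pmod q$ and $-(n-2)(q/2)\equiv 0$ for $n$ even and $\equiv q/2$ for $n$ odd (since $-(n-2)$ is odd iff $n$ is odd), matching part $(2)$. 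Contrapositively, if $P$ has no diagonal with the stated sum, then $C$ has no transversal.

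The routine parts are the two congruence computations; the only point requiring a little care — and the part I would write out most carefully — is the translation between transversal cells and diagonals of $P$, namely verifying that the column-permutation conditions on coordinates $0$ and $1$ of the transversal are exactly what is needed to make $(x_1^i)_{i\in Q_q}$ a legitimate diagonal of the row-latin square and to make the $x_0^i$ and $x_2^i,\dots,x_n^i$ contribute full residue sums. I expect no genuine obstacle here: the argument is a direct summation once the correspondence is set up, and it uses only that $q\mid\binom{q}{2}$ for $q$ odd and $q/2\equiv\binom{q}{2}\pmod q$ for $q$ even.
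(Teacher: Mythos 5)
Your proposal is correct and follows essentially the same route as the paper: sum the defining equation $x_0^i=\pi_i(x_1^i)+x_2^i+\dots+x_n^i$ over the $q$ cells of a putative transversal, use that each coordinate's values form a permutation of $Q_q$ so each full column contributes $\binom{q}{2}\bmod q$ (which is $0$ for odd $q$ and $q/2$ for even $q$), and conclude that $(x_1^i)_i$ is a diagonal of the row-latin square with the forced sum. Your explicit computation of $-(n-2)\binom{q}{2}\bmod q$ just spells out the case analysis the paper leaves as "analogous" in item (2).
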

\begin{proof} 
(1) A transversal
$\{(y^1_0,y^1_1,\dots,y^1_n),(y^2_0,y^2_1,\dots,y^2_n),
\dots,(y^q_0,y^q_1,\dots,y^q_n)\}$ of $L[\pi_1,n]$,\dots,
$L[\pi_q,n]$
 satisfy the following system of linear equations:

$$
 \left\{
\begin{array}{l}
y_0^1=\pi_1 y_1^1+y_2^1+\dots+y^1_n,\\
y_0^2=\pi_2 y_1^2+y_2^2+\dots+y^2_n,\\
\dots\\
y_0^q=\pi_q y_1^q+y_2^q+\dots+y^q_n.\\
\end{array}
\right.
$$

By the definition of transversals, for each $i=0,1,\dots,n$  sets
$\{y_i^1,\dots,y_i^q\}$ coincide with $Q_q$. The sum of all elements
from $Q$ is equal to zero by modulo $q$. Consequently,
$\pi_1y_1^1+\dots+\pi_qy_1^q=0$, i.e., the row-latin square
consisting of layers $\pi_1,\dots,\pi_q$ has a diagonal with zero
sum.

The proof of item (2) is analogous to item (1), with the exception
that the sum of all elements from $Q$ is equal to $q/2$ by modulo
$q$ if $q$ is even.
\end{proof}

\begin{corollary}\label{corol_pi}
A layer-latin cube of odd order $5$ consisting of layers
$L[\pi_1,2],\dots, L[\pi_5,2]$ does not have transversals if 
the row-latin square consisting of layers $\pi_1,\dots,\pi_5$ does
not  have a diagonal with a zero sum.
\end{corollary}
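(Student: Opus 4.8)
The plan is to recognize that Corollary \ref{corol_pi} is nothing more than the instance $q=5$, $n=2$ of Lemma \ref{PPVlemma1}(1). Since $5$ is odd, part (1) of the lemma applies verbatim; a latin (or layer-latin) $3$-cube is exactly what the paper calls simply a ``cube'', so $L[\pi_i,2]$ are latin squares, the $3$-dimensional layer-latin cube they form is the object in the corollary, and ``diagonal with a zero sum'' is the same condition in both statements. So the proof is one sentence: apply Lemma \ref{PPVlemma1}(1) with $q=5$ and $n=2$.

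For completeness I would also include the short self-contained recap of why this works in this special case, which is just the specialization of the lemma's proof. Suppose, for contradiction, that the layer-latin cube built from $L[\pi_1,2],\dots,L[\pi_5,2]$ had a transversal, say $\{(y^i_0,y^i_1,y^i_2) : i=1,\dots,5\}$. By the defining equation of the layers, $y^i_0=\pi_i y^i_1+y^i_2$ for each $i$. By the definition of a transversal, for each fixed coordinate $j\in\{0,1,2\}$ the multiset $\{y^1_j,\dots,y^5_j\}$ equals $Q_5$, hence $\sum_{i=1}^5 y^i_j\equiv 0\pmod 5$ because $0+1+2+3+4\equiv 0\pmod 5$. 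Summing the five equations over $i$ therefore yields $\sum_{i=1}^5 \pi_i y^i_1\equiv 0\pmod 5$. But the cells $(i,y^i_1)$ for $i=1,\dots,5$ pick one cell in each row of the row-latin square with rows $\pi_1,\dots,\pi_5$, one cell in each column (since $\{y^1_1,\dots,y^5_1\}=Q_5$), and their entries $\pi_i y^i_1$ sum to $0$ modulo $5$; that is exactly a diagonal of this row-latin square with zero sum, contradicting the hypothesis.

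The only place where anything is used beyond bookkeeping is the fact that the sum of all symbols in $Q_q$ vanishes modulo $q$ when $q$ is odd, which is why the hypothesis is stated for odd order and why $q=5$ is admissible; this is already built into Lemma \ref{PPVlemma1}(1). Consequently there is no real obstacle here: the statement is a direct corollary and requires no new ideas, only substitution of the parameters.
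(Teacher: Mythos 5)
Your proposal is correct and matches the paper exactly: the corollary is stated without a separate proof precisely because it is the instance $q=5$, $n=2$ of Lemma~\ref{PPVlemma1}(1), and your recap of the summation argument is the same as the lemma's proof specialized to this case.
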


Denote by $\varepsilon_i$ a permutation of $q$ elements such that
$\varepsilon_ix=x+i$.

\begin{corollary}\label{corol_pi0}\quad

$(1)$ A layer-latin $(n+1)$-cube of odd  order $q$ consisting of
layers $L[\varepsilon_{i_0},n],L[\varepsilon_{i_1},n]$,\dots,
$L[\varepsilon_{i_{q-1}},n]$ has transversals if and only if
$\sum\limits_{j=0}^{q-1}i_j=0$.

$(2)$ A layer-latin $(n+1)$-cube of even order $q$ consisting of
layers $L[\varepsilon_{i_0},n],L[\varepsilon_{i_1},n]$,\dots,
$L[\varepsilon_{i_{q-1}},n]$ has transversals if and only if
$\sum\limits_{j=0}^{q-1}i_j=0$ if $n$ is odd and
$\sum\limits_{j=0}^{q-1}i_j=q/2$ if $n$ is even.
\end{corollary}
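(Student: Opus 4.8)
The plan is to derive Corollary~\ref{corol_pi0} from Lemma~\ref{PPVlemma1} by specializing the permutations $\pi_j$ to the cyclic shifts $\varepsilon_{i_j}$ and, crucially, by upgrading the one-directional implication of the lemma to an equivalence in this special case. The lemma says that if the auxiliary row-latin square built from $\pi_1,\dots,\pi_q$ has no diagonal with the relevant sum (zero, or $q/2$), then there is no transversal. For the converse I must exhibit an actual transversal whenever that sum condition is met, and the linear structure of the $\varepsilon_{i_j}$'s is exactly what makes this possible.

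First I would record what the auxiliary row-latin square looks like when $\pi_j=\varepsilon_{i_j}$. Its $j$th row is the permutation $x\mapsto x+i_j$ of $Q_q$, so a diagonal of it is a choice of one entry from each row and each column; concretely, picking column $c_j$ in row $j$ contributes the value $c_j+i_j$, and the ``diagonal'' (one entry per row, one per column) condition forces the $c_j$ to be a permutation of $Q_q$ and the values $c_j+i_j$ to be a permutation of $Q_q$. The sum of the diagonal entries is then $\sum_j (c_j+i_j)=\sum_j c_j+\sum_j i_j$. Since $\{c_j\}=Q_q$ we have $\sum_j c_j\equiv 0$ if $q$ is odd and $\equiv q/2$ if $q$ is even; hence the diagonal sum equals $\sum_j i_j$ in the odd case, and $\sum_j i_j+q/2$ in the even case. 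Plugging this into Lemma~\ref{PPVlemma1}: in the odd case (1), a transversal forces $\sum_j i_j=0$; in the even case (2) with $n$ even the required diagonal sum is $0$, forcing $\sum_j i_j=0$, while with $n$ odd the required diagonal sum is $q/2$, forcing $\sum_j i_j+q/2\equiv q/2$, i.e.\ $\sum_j i_j=0$ again --- wait, I must match this against the \emph{shifted} statement of the corollary, so I need to be careful: the corollary asserts the criterion is $\sum i_j=0$ when $n$ is odd and $\sum i_j=q/2$ when $n$ is even, which is the reverse parity pairing from the raw lemma; the point is that here $L[\varepsilon_{i_j},n]$ uses $n$ summands $x_2,\dots,x_{n+1}$ \emph{plus} the $\varepsilon_{i_j}x_1$ term, and when one also ranges $x_1$ over $Q_q$ the shift on the first coordinate contributes an extra $\sum$ over $Q_q$, which is $0$ for $q$ odd and $q/2$ for $q$ even --- so the even/odd bookkeeping for $n$ gets an extra flip, exactly producing the stated thresholds. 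I would spell this single sign-chase out cleanly rather than gesture at it.

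For the converse direction I would construct the transversal explicitly. Assume the relevant sum condition on the $i_j$ holds. Order the layers as $C_0,\dots,C_{q-1}$ with $C_j=L[\varepsilon_{i_j},n]$. I want cells $(y^j_0,y^j_1,\dots,y^j_{n+1})$, one per layer $j$, with each coordinate-vector $(y^0_t,\dots,y^{q-1}_t)$ a permutation of $Q_q$ and all $y^j_0$ distinct (the symbol condition). A clean choice is to take $y^j_1=j$ (so the layer index and the first coordinate coincide, a permutation automatically), and $y^j_t=a_t+j\cdot b_t$ for suitable constants $a_t,b_t\in Q_q$ with $b_t$ invertible mod $q$ for $2\le t\le n+1$, which again gives permutations in each of those coordinates. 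Then $y^j_0=\varepsilon_{i_j}(j)+\sum_{t=2}^{n+1}y^j_t = j+i_j+\sum_t(a_t+jb_t)$, and I need the map $j\mapsto y^j_0$ to be a permutation of $Q_q$. Writing $y^j_0 = (i_j + j) + A + jB$ where $A=\sum a_t$, $B=\sum b_t$, the obstacle is that $j\mapsto i_j+j$ need not be a permutation (the $i_j$ are arbitrary with prescribed sum, not forming a permutation). So a pure ``linear in $j$'' ansatz will not work for the $0$-coordinate; I expect to need to choose the correspondence between layers and first coordinates more cleverly, or to use the known classification/search for small $q$ together with an induction on $n$ via diagonal latin-layer retracts (Proposition~\ref{PPVprop4}) to bootstrap from a base case. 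I anticipate this converse --- producing a genuine transversal from the sum condition --- to be the main obstacle; the forward direction is just Lemma~\ref{PPVlemma1} plus the arithmetic of $\sum_{x\in Q_q}x$, whereas the converse genuinely uses that the layers are all cyclic-shift-linear and likely proceeds by first reducing to the all-equal case $i_j\equiv 0$ (which has the principal diagonal of ``standard'' linear cubes as a transversal) and then showing that changing the $i_j$ while keeping the sum fixed corresponds to a relabelling that preserves having a transversal.

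Finally I would assemble the two directions into the stated iff, keeping the two cases ($q$ odd for part (1); $q$ even split by the parity of $n$ for part (2)) visible, and double-check the edge arithmetic $q/2$ modulo $q$ in part (2). A sanity check against Corollary~\ref{corol_pi}: that corollary, for $q=5$ and general permutations, only gives the one-sided implication, consistent with the fact that the equivalence here is special to the shift family $\varepsilon_i$.
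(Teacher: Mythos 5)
Your forward (necessity) direction is fine once the sign-chase is actually written out: summing the layer equations $y^j_0=y^j_1+i_j+y^j_2+\dots+y^j_n$ over $j$ and using $\sum_{x\in Q_q}x=0$ (resp.\ $q/2$) gives $\sum_j i_j=(1-n)\sum_{x\in Q_q}x$, which is exactly the stated thresholds. There is no ``extra flip'' beyond the single contribution $\sum_j y^j_1$ that you had already accounted for, so the mismatch you worried about is illusory; this part is just Lemma~\ref{PPVlemma1} specialized to $\pi_j=\varepsilon_{i_j}$, where every diagonal of the auxiliary row-latin square has the same sum $\sum_j i_j+\sum_{x\in Q_q}x$.

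The genuine gap is the converse, and you correctly sense it but do not close it. What is needed is: given $i_0,\dots,i_{q-1}\in\mathbb{Z}_q$ with the prescribed sum, produce permutations $\sigma_0,\dots,\sigma_n$ of $\mathbb{Z}_q$ with $\sigma_0(j)=\sigma_1(j)+i_j+\sigma_2(j)+\dots+\sigma_n(j)$ for all $j$. Fixing $\sigma_2,\dots,\sigma_n$ arbitrarily and setting $c_j=i_j+\sigma_2(j)+\dots+\sigma_n(j)$, the sum hypothesis makes $\sum_j c_j=0$, and the problem reduces to writing a zero-sum sequence $(c_j)$ as a difference $\sigma_0(j)-\sigma_1(j)$ of two permutations of $\mathbb{Z}_q$. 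That this is always possible is M.~Hall's 1952 theorem on abelian groups (the same tool Drisko uses for the row-latin rectangle analogue the authors point to); it is a real theorem, not a relabelling. Both of your fallback ideas fail: the linear ansatz fails for the reason you give, and the proposed reduction ``changing the $i_j$ while keeping the sum fixed is a relabelling'' is false, since the only equivalences preserving the family $\{L[\varepsilon_i,n]\}$ shift all $i_j$ by a common constant or permute them, so e.g.\ $(0,1,2)$ and $(0,0,0)$ in $\mathbb{Z}_3$ both have zero sum but are not equivalent and must both be handled. (For comparison, the paper states this corollary without any proof, treating it as known from the row-latin rectangle case.)
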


\begin{proposition}\label{PPVprop22}
Every cell of a layer-latin $(n+1)$-cube $C$ of  order $q$
consisting of layers $L[\varepsilon_{i_0},n]$,
$L[\varepsilon_{i_1},n]$, \dots, $L[\varepsilon_{i_{q-1}},n]$ is
included in the same number of transversals.
\end{proposition}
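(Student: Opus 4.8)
The plan is to exploit the high degree of symmetry of the cube $C$ built from layers $L[\varepsilon_{i_0},n],\dots,L[\varepsilon_{i_{q-1}},n]$, namely the group of coordinate translations. For a vector $a=(a_0,a_1,\dots,a_n)\in Q_q^{n+1}$ define the map $\varphi_a$ on cells by $\varphi_a(x_0,x_1,\dots,x_n)=(x_0+a_0,x_1+a_1,\dots,x_n+a_n)$. First I would check that $\varphi_a$ sends cells of $C$ to cells of $C$ provided $a_0=a_1+a_2+\dots+a_n$: indeed, if $x_0=\varepsilon_{i_j}x_1+x_2+\dots+x_n = x_1+i_j+x_2+\dots+x_n$ holds in the $j$th layer (so $x_j$-coordinate direction $1$ has value $x_1$ and the layer index is $x_1$'s companion — more precisely the layer is selected by the first coordinate, which I take to be the "layer" coordinate, call it index $1$ here), then adding $a_k$ to $x_k$ for $k\ge 1$ and $a_0=\sum_{k\ge1}a_k$ to $x_0$ preserves the defining equation of that same layer. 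Thus $\varphi_a$ is an automorphism of the $(n+1)$-dimensional array $C$ for every such $a$, and this family of automorphisms has size $q^{n}$.

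The key point is that this automorphism group acts transitively on the cells of $C$. Given two cells $c$ and $c'$ of $C$, I would produce $a$ with $\varphi_a(c)=c'$ by choosing $a_k = x'_k - x_k$ for $k\ge 1$, which forces $a_0=\sum_{k\ge1}a_k$, and then verifying $a_0 = x'_0-x_0$ automatically because both $c$ and $c'$ satisfy the (same-indexed) layer equation — here one uses that the layer index is itself a coordinate, so $c$ and $c'$ lying in layers $x_1$ and $x'_1$ respectively both satisfy $x_0 = x_1 + i_{x_1} + x_2+\dots$ and $x'_0 = x'_1 + i_{x'_1}+x'_2+\dots$; subtracting and using $a_k=x'_k-x_k$ gives $a_0 = x'_0-x_0$ once we note $i_{x'_1}-i_{x_1}$ is absorbed. (I should be careful here: the layer-translation in the index coordinate permutes the layers among themselves, carrying layer $L[\varepsilon_{i_\ell},n]$ to the layer that sits in position $\ell+a_1$; since \emph{all} the layers have the form $L[\varepsilon_{i},n]$ and automorphisms need only preserve the array as a set of layered cells, this is fine — the defining equation after the shift is the equation of whichever layer now occupies that slot, and consistency is exactly the statement that the numbers $i_\ell$ were fixed in advance.) Once transitivity is established, a standard orbit-counting/double-counting argument finishes the proof: if $N$ is the number of transversals and each transversal has $q$ cells, then summing over all $\varphi_a$ and all transversals shows each cell lies in $(qN)/(\text{number of cells in one layer-diagonal position class})$ transversals, a number independent of the cell. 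Concretely, the multiset $\{\varphi_a(T) : a, \ T \text{ transversal}\}$ both (i) consists only of transversals, since automorphisms map transversals to transversals, and (ii) covers each (cell, transversal-through-it) incidence equally often by transitivity; comparing the two counts gives the claim.

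The main obstacle, and the step deserving the most care, is the precise bookkeeping of which coordinate is the "layer" coordinate and how the translation in that coordinate interacts with the list $(i_0,\dots,i_{q-1})$. The automorphism $\varphi_a$ does not fix each layer setwise when $a_1\neq 0$; it permutes the layers cyclically. This is harmless for the counting argument — we only need $\varphi_a$ to preserve $C$ as a whole and to map transversals to transversals — but it must be stated correctly, and one must confirm that the defining equation is genuinely preserved, i.e. that translating into the slot previously occupied by layer $i_{x_1+a_1}$ is compatible, which it is because the shift in $x_0$ was chosen using the generic relation $a_0=\sum_{k\ge1}a_k$ that does not reference any particular $i_\ell$. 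An alternative, perhaps cleaner, route that avoids touching the layer coordinate at all: restrict to vectors $a$ with $a_1=0$ (so every layer is fixed setwise) together with the one extra automorphism $\psi$ that relabels layers by $x_1\mapsto x_1+1$ while simultaneously shifting $x_0$ by $i_{x_1}$'s telescoping correction — but since $\{i_\ell\}$ need not be an arithmetic progression this $\psi$ may fail to be well defined, so in general one really does need the full transitivity statement and hence the careful argument above. I would present the direct transitivity proof as the main line, flagging the layer-permutation subtlety explicitly.
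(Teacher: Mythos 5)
Your first step---the translations $\varphi_a$ with $a_0=a_1+\cdots+a_n$ that fix every layer setwise and act transitively on the $q^n$ cells of each layer---is correct, and it is exactly the autotopy the paper uses. The gap is the transitivity claim \emph{across} layers. The difference $i_{x'_1}-i_{x_1}$ is not ``absorbed'': subtracting the two layer equations gives $x'_0-x_0=\sum_{k\geq 1}(x'_k-x_k)+(i_{x'_1}-i_{x_1})$, so the translation with $a_0=\sum_{k\geq1}a_k$ carries $c$ to $c'$ only when $i_{x_1}=i_{x'_1}$; and if you instead adjust $a_0$ so as to hit $c'$, the resulting translation no longer preserves $C$ as a set unless $i_{\ell+a_1}-i_{\ell}$ is independent of $\ell$, i.e.\ unless the $i_\ell$ form an arithmetic progression in the layer index. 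For a concrete failure take $q=5$ and $(i_0,\dots,i_4)=(0,0,0,0,1)$ (precisely the kind of tuple the proposition is applied to in Lemma \ref{PPVlemma2}): the group of translations preserving $C$ then has order exactly $q^{n}$ and its orbits are precisely the $q$ layers, so it cannot be transitive on the $q^{n+1}$ cells. You in fact notice this very obstruction in your final paragraph (the map $\psi$ ``may fail to be well defined'' when $\{i_\ell\}$ is not an arithmetic progression), but you then fall back on ``the full transitivity statement,'' which is the statement that fails; the argument is circular at that point.

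What is missing is a much simpler bridge between layers, and it is how the paper finishes: by the definition of a transversal, every transversal of $C$ meets every layer in exactly one cell, so for each layer the number of (cell, transversal) incidences inside that layer equals the total number $N$ of transversals of $C$. Since your within-layer translations already show that all $q^{n}$ cells of a given layer lie in the same number of transversals, that common number is $N/q^{n}$, which does not depend on the layer. No automorphism moving one layer onto another is needed, and with this replacement your argument coincides with the paper's proof.
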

\begin{proof}  Consider a  linear latin $n$-cube
$L[\varepsilon_{0},n]$ defined by the equation $x_0=x_1+\dots+x_n$.
Let tuples $(y_0,y_1,\dots,y_n)$ and $(z_0,z_1,\dots,z_n)$ suffice
the equation. Consider an isotopy $\tau_0=\sum_{i=1}^n z_i-y_i$,
$\tau_i=z_i-y_i$, $i=1,\dots,n$. It maps $(y_0,y_1,\dots,y_n)$ to
$(z_0,z_1,\dots,z_n)$  and preserves $L[\varepsilon_{0},n]$. It is
easy to see that this map is an autotopy of $L[\varepsilon_{j},n]$
for any $j\in\{1,\dots,q-1\}$. Therefore,
$\tau=(\tau_0,\dots,\tau_n)$ is an autotopy of $C$. Each autotopy
maps  transversals to transversals. Then cells $(y_1,\dots,y_n)$ and
$(z_1,\dots,z_n)$ in any fixed layer are included in the same number
of transversals of $C$. The number of transversals is equal for all
cells from any fixed layer  because $(y_1,\dots,y_n)$ and
$(z_1,\dots,z_n)$ are arbitrary.
 By the definition of transversal,
every transversal intersects each layer exactly once. Then the
number of transversals is equal for any cells from different layers.
\end{proof}

\begin{lemma}\label{PPVlemma2}
Let $\sum\limits_{j=0}^{q-1}i_j=0$. A layer-latin $(n+1)$-cube of
odd order $q$ consisting of layers
$L_0,L[\varepsilon_{i_1},n],\dots, L[\varepsilon_{i_{q-1}},n]$ has
no transversals if and only if
 $n$-cubes $L_0$ and
$L[\varepsilon_{i_0},n]$ are different in each cell.
\end{lemma}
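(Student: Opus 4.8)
The plan is to analyze exactly when a transversal of the $(n+1)$-cube with layers $L_0, L[\varepsilon_{i_1},n],\dots, L[\varepsilon_{i_{q-1}},n]$ can exist, using the same linear-algebra setup as in Lemma~\ref{PPVlemma1}. Write a candidate transversal as $\{(y^0_0,\dots,y^0_n),\dots,(y^{q-1}_0,\dots,y^{q-1}_n)\}$, where the $j$th cell lies in the $j$th layer. For $j\geq 1$ the constraint is $y^j_0 = y^j_1+\dots+y^j_n+i_j$, while for $j=0$ the constraint is $y^0_0 = L_0(y^0_1,\dots,y^0_n)$. By the transversal condition each coordinate sequence $y^0_i,\dots,y^{q-1}_i$ is a permutation of $Q_q$, so summing all $q$ equations and using $\sum_{x\in Q_q}x = 0$ (valid since $q$ is odd) gives $0 = 0 + \sum_{j=1}^{q-1} i_j + \big(L_0(y^0_1,\dots,y^0_n) - (y^0_1+\dots+y^0_n)\big)$. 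Since $\sum_{j=0}^{q-1} i_j = 0$, this says precisely $L_0(y^0_1,\dots,y^0_n) = y^0_1+\dots+y^0_n + i_0 = L[\varepsilon_{i_0},n](y^0_1,\dots,y^0_n)$. Hence any transversal must pass through a cell of layer $0$ at which $L_0$ and $L[\varepsilon_{i_0},n]$ agree; contrapositively, if $L_0$ and $L[\varepsilon_{i_0},n]$ differ in every cell, there is no transversal. That is one direction.

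For the converse, suppose $L_0$ and $L[\varepsilon_{i_0},n]$ agree at some cell $(a_1,\dots,a_n)$, so $L_0(a_1,\dots,a_n) = a_1+\dots+a_n+i_0 =: a_0$. I want to build an actual transversal through this cell. The natural idea is to extend the full linear cuboid $L[\varepsilon_{i_0},n], L[\varepsilon_{i_1},n],\dots, L[\varepsilon_{i_{q-1}},n]$: by Corollary~\ref{corol_pi0}(1), since $\sum_j i_j = 0$, this cuboid does have transversals, and by Proposition~\ref{PPVprop22} every cell — in particular $(a_1,\dots,a_n)$ in layer $0$ — lies in some transversal of it. Take such a transversal $T$. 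Its layer-$0$ cell is $(a_0,a_1,\dots,a_n)$, at which $L_0$ and $L[\varepsilon_{i_0},n]$ coincide; the remaining cells of $T$ all lie in layers $1,\dots,q-1$, which are identical in the two cuboids. Therefore $T$ is also a valid transversal of the cuboid with layer $0$ replaced by $L_0$: every layer still meets $T$ exactly once and the symbols on $T$ are unchanged and hence still pairwise distinct. This gives a transversal, completing the "if" direction.

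The one point that needs a little care is the phrase "$L_0$ and $L[\varepsilon_{i_0},n]$ are different in each cell'': I should confirm this is meant as equality of the $n$-cubes viewed as functions $Q_q^n\to Q_q$, i.e. $L_0(x_1,\dots,x_n)\neq x_1+\dots+x_n+i_0$ for all $(x_1,\dots,x_n)$, which is how the proof above uses it. With that reading, the argument is essentially a combination of the summation trick of Lemma~\ref{PPVlemma1} (forcing agreement at the layer-$0$ cell) with the transitivity/regularity statement of Proposition~\ref{PPVprop22} (producing a transversal through any prescribed cell of the purely linear cuboid). The main obstacle, such as it is, is just making sure the transversal of the linear cuboid can be chosen through the specific agreeing cell $(a_1,\dots,a_n)$ of layer $0$ — but Proposition~\ref{PPVprop22} says every cell of that cuboid lies in the same positive number of transversals, so as long as that number is nonzero (guaranteed by Corollary~\ref{corol_pi0}(1)), such a transversal exists. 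No genuinely hard step remains.
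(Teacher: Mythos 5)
Your proof is correct and follows essentially the same route as the paper: the ``only if'' direction is the mod-$q$ summation trick of Lemma~\ref{PPVlemma1} forcing the layer-$0$ cell to be an agreement cell (the paper packages this as replacing $L_0$ at that cell by the unique matching linear layer $L[\varepsilon_j,n]$ with $j\neq i_0$ and invoking Corollary~\ref{corol_pi0}, which is the same computation), and the ``if'' direction uses Proposition~\ref{PPVprop22} together with Corollary~\ref{corol_pi0}(1) to route a transversal of the all-linear cuboid through the agreeing cell, exactly as the paper does. No gaps.
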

\begin{proof} Suppose that this layer-latin $(n+1)$-cube has a transversal.
Consider a cell $z = (z_1,\dots,z_n)$ with value
$L_0(z_1,\dots,z_n)$ from the transversal. By the conditions of the
lemma, $L_0(z)=L[\varepsilon_{j},n](z)$ for some $j\neq i_0$.  Then
the layer-latin $(n+1)$-cube  consisting of layers
$L[\varepsilon_{j},n],L[\varepsilon_{i_1},n],\dots,
L[\varepsilon_{i_{q-1}},n]$ has a transversal under the same set of
cells. But
 by
Corollary \ref{corol_pi0}, for any $j\neq i_0$ the latin-layer
$(n+1)$-cube consisting of layers
$L[\varepsilon_{j},n],L[\varepsilon_{i_1},n],\dots,
L[\varepsilon_{i_{q-1}},n]$  does not have  transversals.

The inverse statement follows from Proposition \ref{PPVprop22}. If
$L_0(z)=L[\varepsilon_{i_0},n](z)$,  then there exists a transversal
containing cell $z$. \end{proof}

\begin{theorem}\label{PPVth1}
 For any  $q>1$ there exists a $(2q-2)$-layer latin
$(n+1)$-cuboid of order $q$ that does not have transversals.
\end{theorem}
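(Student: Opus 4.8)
The plan is to build the desired cuboid explicitly from linear layers and to invoke the earlier lemmas to certify that it has no transversal. Recall that Drisko's construction gives a row-latin rectangle of size $(2q-2)\times q$ with no transversal; the natural idea is to lift each of its $2q-2$ rows to a linear latin $n$-cube via the map $\varepsilon_i \mapsto L[\varepsilon_i,n]$ and hope that Lemma \ref{PPVlemma1} transfers the nonexistence of a zero-sum (resp. $q/2$-sum) diagonal upward. Concretely, Drisko's rectangle can be taken to have rows that are the cyclic permutations $\varepsilon_0,\varepsilon_1,\dots,\varepsilon_{q-1},\varepsilon_1,\varepsilon_2,\dots,\varepsilon_{q-1},\varepsilon_0$ — that is, two interleaved copies of the cyclic group with a shift — arranged so that every diagonal (a choice of one cell per row and at most one per column) has nonzero sum. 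First I would verify, by the standard Drisko argument, that no transversal of this $(2q-2)\times q$ row-latin rectangle exists; equivalently, no system of $2q-2$ representatives, one per row, all in distinct columns, sums to the ``forbidden'' value $0$ (for odd case) or $q/2$ (for even case, even $n$).

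The second step is to set up the cuboid $C$ of size $(2q-2)\times \underbrace{q\times\cdots\times q}_{n}$ whose layers are $L[\varepsilon_{i_1},n],\dots,L[\varepsilon_{i_{2q-2}},n]$, where $(i_1,\dots,i_{2q-2})$ is the sequence of shifts from the Drisko rectangle. Since each layer is a linear latin $n$-cube, $C$ is a $(2q-2)$-layer latin $(n+1)$-cuboid. Now suppose $C$ had a transversal $\{(y^k_0,y^k_1,\dots,y^k_n)\}_{k=1}^{2q-2}$. As in the proof of Lemma \ref{PPVlemma1}, the defining equations $y^k_0 = \varepsilon_{i_k} y^k_1 + y^k_2 + \cdots + y^k_n = y^k_1 + \cdots + y^k_n + i_k$ hold for each $k$. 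Summing over $k$ and using that the transversal picks, in each coordinate direction $0,1,\dots,n$, a set of $2q-2$ distinct values from $Q_q$ — hence the multiset of values in that coordinate consists of all of $Q_q$ once plus some $(q-2)$-element subset $S$ once more — I would track the total sum modulo $q$. The key arithmetic identity is that $\sum_k y^k_0 - \sum_{i=1}^n \sum_k y^k_i \equiv \sum_k i_k \pmod q$, and the left side telescopes to a difference of ``extra'' subset-sums that is forced to be $0$ (because the same extra subset $S$ in coordinate $0$ balances against those in coordinates $1,\dots,n$ — here I would use a suitable choice/normalization, or more carefully argue that $\sum_k i_k$ must equal the forbidden value). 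This yields $\sum_k i_k \equiv 0 \pmod q$ (odd case), contradicting that the corresponding cells in the Drisko rectangle form a diagonal of nonzero sum.

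The main obstacle I anticipate is the bookkeeping in the summation step: because the layer index runs to $2q-2$ rather than $q$, each coordinate direction is hit $2q-2$ times, so the set $\{y^k_i : k\}$ is not a single copy of $Q_q$ but $Q_q$ together with a repeated $(q-2)$-subset, and I must show these repeated-subset contributions cancel across coordinates. One clean way around this: instead of summing all $2q-2$ equations at once, exploit that a transversal of $C$ restricted to any set of $q$ layers among the $2q-2$ that happen to use $q$ distinct columns in the ``$x_1$'' direction would have to be a transversal of that sub-cuboid, and apply Corollary \ref{corol_pi0} there; combined with the combinatorial structure of Drisko's rectangle (every maximal partial transversal of size $q$ sits inside it in a controlled way), this reduces everything to the already-proven one-dimensional statement and Lemma \ref{PPVlemma1}. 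If that reduction is awkward, the alternative — and probably cleanest — route is to mimic Drisko's original potential/parity argument directly in the $(n+1)$-dimensional setting: assign to each cell the ``weight'' $i_k$ of its layer, observe that a transversal's total weight is determined modulo $q$ independently of which cells are chosen (by the linear-algebra computation above), and then note this forced value is exactly the one Drisko's construction avoids. I expect the write-up to be short once the right invariant is isolated; the difficulty is purely in choosing the cleanest invariant and the cleanest explicit shift-sequence.
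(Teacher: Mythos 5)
Your overall strategy for the case $q$ odd (and $q$ even with $n$ odd) is the paper's: take $q-1$ layers $L[\varepsilon_0,n]$ and $q-1$ layers $L[\varepsilon_1,n]$, and use the sum invariant. Two corrections first. A transversal of a $(2q-2)\times q\times\cdots\times q$ cuboid has only $q=\min$ cells, not $2q-2$, so the ``bookkeeping obstacle'' you worry about does not exist: each of the $n$ non-layer directions is hit exactly once per hyperplane, the multiset of values in each such coordinate is exactly $Q_q$, and the whole argument is precisely your fallback route --- the transversal selects $q$ of the layers, hence is a transversal of a $q$-layer sub-cube to which Corollary \ref{corol_pi0} applies directly; the sum of the selected shifts is $b\cdot 1$ with $1\le b\le q-1$, never $0$. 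Also, your displayed shift sequence has $2q$ terms rather than $2q-2$; Drisko's extremal rectangle is simply $q-1$ copies of $\varepsilon_0$ and $q-1$ copies of $\varepsilon_1$.

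The genuine gap is the case $q$ even, $n$ even. There the forbidden sum is $q/2$, and the multiset consisting of $q-1$ zeros and $q-1$ ones \emph{does} admit a $q$-subset with sum $q/2$ (take $q/2$ shifts of each kind); since Corollary \ref{corol_pi0}(2) is an ``if and only if,'' such a selection of layers actually \emph{has} transversals, so the lifted Drisko construction fails outright in this case rather than merely being hard to verify. No two-valued shift sequence of the Drisko type will work uniformly here, and the paper uses a different construction: take all $2q-2$ layers equal to $L[\varepsilon_0,n]$. Then any transversal of the cuboid is a transversal of the latin $n$-cube $L[\varepsilon_0,n]$ itself (the Cayley table of the iterated group $\mathbb{Z}_q$), which has none when $q$ and $n$ are both even, again by the sum invariant. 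You need to add this second case (or some equivalent choice of shifts all of whose $q$-subset sums avoid $q/2$) for the theorem to be proved for all $q>1$ and all $n$.
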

\begin{proof}  Case 1: $q$ is odd or $q$ is even and $n$ is odd.
 Consider a layer-latin $(n+1)$-cuboid consisting of $q-1$ layers
 $L[\varepsilon_{0},n]$ and $q-1$ layers
 $L[\varepsilon_{1},n]$.
 This layer-latin
$(n+1)$-cuboid does not have transversal
  by  Corollary \ref{corol_pi0}(1,2).

Case 2:  $q$ is even and $n$ is even.  Consider a layer-latin
$(n+1)$-cuboid $C$  consisting of $m\geq q$ identical layers
$L[\varepsilon_{0},n]$. Any transversal of $C$ is equivalent to a
transversal of $L[\varepsilon_{0},n]$. But latin $n$-cube
$L[\varepsilon_{0},n]$ does not have transversals by Corollary
\ref{corol_pi0}(2).
  \end{proof}

The previous statements of this section are well known in the case
of row-latin rectangles (see \cite{Drisko}).  We assume that it is
possible to generalize the main results of \cite{Drisko} as well.

\begin{conjecture}
 Every $(2q-1)$-layer latin $n$-cuboid of odd order $q$ has
transversals.
\end{conjecture}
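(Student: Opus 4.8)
\medskip
\noindent\emph{Towards a proof.} The bound $2q-1$ is presumably best possible: Theorem \ref{PPVth1} exhibits a $(2q-2)$-layer latin $n$-cuboid of order $q$ with no transversal. For the positive direction the natural plan is induction on the dimension, the base case $n=2$ being Drisko's theorem \cite{Drisko} that every $(2q-1)\times q$ row-latin square has a transversal. For the inductive step, take a $(2q-1)$-layer latin $(n+1)$-cuboid $C$ of order $q$, fix a combinatorial diagonal pattern in one of the last $n$ coordinate directions, pass to a diagonal latin-layer retract of dimension one lower, apply the induction hypothesis to it, and lift the resulting transversal back to $C$ by Proposition \ref{PPVprop4}. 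The work lies in choosing the pattern so that this retract is again a $(2q-1)$-layer \emph{latin} $n$-cuboid of order $q$: preserving both the number of layers and the latin property is the delicate point.

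The main obstacle is that no such reduction can be purely formal. Applied to the cuboid whose $2q-1$ layers all equal one fixed latin $(n-1)$-cube $L$ (take $n\ge 3$), the conjecture asserts precisely that $L$ has a transversal; hence it contains Wanless's conjecture $(1)$ \cite{Wan} as a special case and is at least as hard as that open problem. Every argument must therefore handle the regime of equal, or nearly equal, layers, which is exactly the Wanless regime --- and which for even order already defeats the analogous statement: by the construction in the proof of Theorem \ref{PPVth1} (Case 2), a stack of arbitrarily many copies of $L[\varepsilon_{0},n]$ with both $q$ and $n$ even has no transversal. So I do not expect a proof in full generality without, in effect, also proving Wanless's conjecture $(1)$.

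The feasible targets are then the orders for which Wanless's conjecture is already known, in particular $q=5$, now settled by Theorem \ref{PPVth2}. For $q=5$ I would imitate the structure of that proof: for a $9$-layer latin cuboid of order $5$ of any dimension, separate the sublinear case from the case with a nonlinear square retract. In the sublinear case one adapts Corollary \ref{latinCor1} --- permutable reducibility via Proposition \ref{PPV-KP}, transversals of composites via Proposition \ref{PPV-AAT} --- with the base of the induction a $9\times 5$ row-latin square, which has a transversal by Drisko's theorem since $9=2\cdot 5-1$. In the case with a nonlinear square retract one mimics Proposition \ref{latinProp1}, except that Theorem \ref{latinT1} must be replaced by its $9$-layer analogue: that a diagonal layer-latin retract built from at least $2q-1$ layers, not all linear, has a transversal. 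That is itself a case of what one is trying to prove, so it would require a dedicated computer-aided analysis of $9$-layer cuboids of order $5$, or a Drisko-style combinatorial argument for them. An order-free alternative is to encode a transversal of $C$ as a rainbow face in the independence complex of an auxiliary graph and to invoke a topological connectivity bound, as in the modern topological proof of Drisko's theorem; there the obstacle is that for $n\ge 3$ this complex is no longer a bipartite matching complex, so the sharp connectivity estimate that yields the exact threshold $2q-1$ is not available and must be established anew --- and it, too, would have to be strong enough to cover the equal-layer case.
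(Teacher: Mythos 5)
The statement you were given is not proved in the paper at all: it is stated as a \emph{conjecture}, motivated only by the remark that the analogous results are known for row-latin rectangles (Drisko's theorem), so there is no proof of the authors' to compare yours against. You correctly recognize this and, rather than claiming a proof, give an analysis of the obstacles; that analysis is sound. In particular, your central observation is correct and worth making explicit: stacking $2q-1$ identical copies of a latin $(n-1)$-cube $L$ of odd order $q$ yields a $(2q-1)$-layer latin $n$-cuboid whose transversals are exactly the transversals of $L$ (two chosen cells cannot share their last $n-1$ coordinates, since they would then lie in a common hyperplane), so the conjecture for dimension $n$ implies Wanless's conjecture $(1)$ for dimension $n-1$ and is therefore at least as hard as that open problem. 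Your other supporting points also check out: the sharpness of $2q-1$ via Theorem \ref{PPVth1}, the base case $n=2$ via Drisko, the failure of the equal-layer stack for even $q$ and even $n$ via Case 2 of the proof of Theorem \ref{PPVth1}, and the genuine difficulty that a diagonal latin-layer retract of a $(2q-1)$-layer cuboid need not again be a $(2q-1)$-layer latin cuboid. Since you do not claim to have closed the argument, there is no hidden gap to flag --- but equally there is no proof here, which is consistent with the paper leaving the statement open. The one concrete contribution you could extract and actually prove is the reduction showing the conjecture subsumes Wanless's conjecture $(1)$; the rest (the $q=5$ programme imitating Theorem \ref{PPVth2}, or a topological Drisko-style argument) is a reasonable research plan but nothing more at this stage.
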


\begin{theorem}\label{PPVth5}
There exist $2$-stochastic $(n+2)$-dimensional matrices of order $q$
with zero permanent for all $n>1$ and odd $q$.
\end{theorem}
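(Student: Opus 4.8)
The plan is to translate the assertion into the language of layer-latin cubes, where the required example is already supplied by Corollary \ref{corol_pi0}. First I would make precise the correspondence announced in the introduction: to a layer-latin $m$-cube $C$ of order $q$ (a stack of $q$ latin $(m-1)$-cubes), associate the $(m+1)$-dimensional $(0,1)$-matrix $A$ whose entry indexed by $(x_0,x_1,\dots,x_m)$ equals $1$ exactly when $x_0$ is the symbol that $C$ places in the cell $(x_1,\dots,x_m)$. A short case check --- splitting on whether the two free coordinates of a $2$-dimensional plane of $A$ both come from $x_1,\dots,x_m$ or one of them is $x_0$ --- shows that every $2$-dimensional plane of $A$ has sum exactly $q$, using in each case that the appropriate retract of $C$ (a latin line, or a latin square inside one layer) behaves as a latin object; hence $A$ is $2$-stochastic. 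Moreover, a diagonal of $A$ is entirely filled with $1$'s if and only if the corresponding $q$ cells of $C$ carry pairwise distinct symbols and no two of them lie in a common hyperplane of $C$, i.e.\ form a transversal of $C$; thus the permanent of $A$ equals the number of transversals of $C$. I would apply this with $m=n+1$, so that $A$ is $(n+2)$-dimensional of order $q$.

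It then remains to produce a layer-latin $(n+1)$-cube of order $q$ with no transversals. Fix odd $q>1$ and $n>1$ and let $C$ consist of one layer $L[\varepsilon_{1},n]$ and $q-1$ layers $L[\varepsilon_{0},n]$; in the notation of Corollary \ref{corol_pi0} this is the choice $i_0=1$, $i_1=\dots=i_{q-1}=0$, for which $\sum_{j=0}^{q-1} i_j = 1 \not\equiv 0 \pmod q$ since $q>1$. By Corollary \ref{corol_pi0}(1) this $C$ has no transversals, so the associated $2$-stochastic $(n+2)$-dimensional $(0,1)$-matrix $A$ of order $q$ has zero permanent, which proves the theorem. (For even $q$ one argues in the same spirit, taking $q$ equal copies of $L[\varepsilon_{0},n]$ and invoking Corollary \ref{corol_pi0}(2) when $n$ is even; I omit this, since the statement above concerns odd $q$.)

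I do not anticipate a real difficulty here: once Corollary \ref{corol_pi0} is in hand the construction is immediate, and a zero-permanent matrix in dimension $n+2$ with $n>1$ is obtained at once. The only step demanding care --- and it is entirely routine --- is the translation itself: verifying that $A$ is genuinely $2$-stochastic (equal sums over all $2$-dimensional planes, not merely along some axes) and that the map $C\mapsto A$ matches transversals of $C$ with unity diagonals of $A$ bijectively, so that a transversal-free $C$ produces a zero-permanent $A$. It is worth recording, as in the introduction, that $C\mapsto A$ is injective but not surjective onto $2$-stochastic $(0,1)$-matrices; this is harmless, since only one such matrix is needed, and it is exactly the failure of Taranenko's conjecture $(1')$ for $2$-stochastic (rather than polystochastic) matrices of odd order that the theorem is meant to witness.
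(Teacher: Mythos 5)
Your proposal is correct and follows essentially the same route as the paper: the same encoding of a layer-latin $(n+1)$-cube as an $(n+2)$-dimensional $(0,1)$-matrix, the same verification of $2$-stochasticity and of the bijection between transversals and unity diagonals, and a transversal-free input cube obtained from Corollary \ref{corol_pi0}(1). The only cosmetic difference is that you instantiate the cube directly (one layer $L[\varepsilon_1,n]$ and $q-1$ layers $L[\varepsilon_0,n]$, sum $1\not\equiv 0$) whereas the paper takes $q$ layers of the $(2q-2)$-layer cuboid from Theorem \ref{PPVth1}; these amount to the same example.
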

\begin{proof} Consider a layer-latin $(n+1)$-cube $C$ of order $q$, where
$i$-th layer is defined by equation $z_i=f_i(y_1,y_2,\dots,y_n)$,
$i=0,\dots,q-1$. Let us define an $(n+2)$-dimensional matrix $M$ of
size $\underbrace{q\times\dots\times q}_{n+2}$ by the following
rule:
$$
M(i,y_0,y_1,\dots,y_n)= \left\{
\begin{array}{l}
1,\ {\rm if}\ y_0=f_i(y_1,y_2,\dots,y_n),\\
0,\ {\rm otherwise.}\\
\end{array}
\right. $$

Consider a line determined by a nonfixed coordinate $y_j$,
$j=0,\dots, q-1$. Then we have fixed value  $i$ in the first
coordinate.  By the definition of latin $n$-cubes, the equation
$y_0=f_i(y_1,y_2,\dots,y_n)$ has a unique solution. Then such a line
of
 $M$ contains only one unit. Therefore, $M$ has $q$ units in any
$2$-dimensional face. Then matrix $M$ is $2$-stochastic by
definition.

Let $C$ consist of $q$ layers taken from $(2q-2)$-layer latin
$(n+1)$-cuboid of order $q$ with no transversals. Then $C$ does not
have transversals. It is easy to see that transversals of $C$
one-to-one correspond to unity diagonals of $M$. Since ${\rm per} M$
is equal to the number of unity diagonals consisting, the proof is
completed by Theorem \ref{PPVth1}. \end{proof}

\section{Listing of layer-latin cubes of order $5$ with no transversals. Preliminaries}

In this section we prove some properties of $2$-layer ($3$-layer)
latin cuboids of order $5$. We need these properties to search
effectively  layer-latin cubes  with no transversals.

 Define a latin rectangle
of size $k_1\times k_2$ under the alphabet $Q_q$ as a table
$k_1\times k_2$ consisting of elements from $Q_q$ such that elements
of every row and every column are distinct. We can get a latin
rectangle of size $k_1\times k_2$ under alphabet $Q_q$ from latin
squares of order $q$ by choosing $k_1$ rows and $k_2$ columns.

We will call the set of latin rectangles a {\it layer-latin
parallelepiped}. We consider layer-latin parallelepipeds of size
$2\times 5\times 2$ ($3\times 5\times 3$) chosen from $2$-layer
($3$-layer) latin cuboids of order $5$. For brevity, we will call
the $2\times 2\times 2$ ($3\times 3\times 3$) subcube of the
$2$-layer ($3$-layer) latin cuboid a $2$-cube ($3$-cube). Two
$2$-cubes derived from the same layer-latin parallelepiped are
adjacent if they have a nonempty intersection. A diagonal of
$2$-cube ($3$-cube) is a set of $2$ ($3$) cells that intersect all
hyperplains. Recall that we  use the term transversal for diagonals
that contain pairwise different values. The set of cell values is a
{\it range} of transversal.
 Transversals of $2$- or $3$-cubes
can have $10={5 \choose 2}={5 \choose 3}$ different ranges. A
$2$-cube has $4$ diagonals and it can have no more than $4$
different ranges of transversals.

Examples.

1. A $2$-cube with layers
 $\begin{array}{cc}
2&0 \\
0&1
\end{array}$,  $\begin{array}{cc}
1&2 \\
3&0
\end{array}$ has transversals with  $2$ different ranges
 $\{0,3\}$ and $\{0,2\}$.

2.  A $3$-cube with layers
 $\begin{array}{ccc}
\bf{0}&1&2 \\
1&2& 3\\
\it{2}&3 &4
\end{array}$,  $\begin{array}{ccc}
1&2&3 \\
2&\bf{3}& \it{4}\\
3&4&0
\end{array}$,
$\begin{array}{ccc}
2&\it{3}&4 \\
3&4& 0\\
4&0&\bf{1}
\end{array}$
 has transversals with $2$ different ranges
$\{0,3,1\}$ (indicate by bold font) and $\{2,3,4\}$ (indicate by
italic).

For every $2$-layer (or $3$-layer) latin cuboid $A$ of order $5$, we
define a table $T(A)$ of size $10\times 10$. Let us numerate all
$10$ subsets of $Q_5$ with cardinality $2$.  Any $3$-element subset
of $Q_5$ is the complement of some $2$-element subset. Let us label
a $2$-element set $S$ by the same number $i$,  $i\in
\{1,\dots,10\}$, as its $3$-element complement $Q_5\setminus S$.
Fixing the $i$th subset of rows and the $j$th subset of columns of
$A$, we get a $2$-cube (or  a $3$-cube) of $A$.  The number of
different ranges of transversals in the corresponding $2$-cube (or
$3$-cube) of $A$ is the element $T(A)(i,j)$ of $T(A)$. We will say
that labels $i,j\in \{1,\dots,10\}$ are adjacent if the
corresponding $2$-cubes are adjacent.

Example. Consider a $2$-layer  latin cuboid
$$A=\quad
   \begin{array}{ccccccccccc}
1 &|2 &0| &3 &4 &\quad & 1 &|0 &4| &2 &3 \\
0 &|1 &3| &4 &2 &\quad & 0 &|2 &1| &3 &4 \\
\hline
2 &|\bf{3} &\bf{4}| &0 &1 &\quad & 3 &|\bf{4} &\bf{0}| &1 &2 \\
3 &|\bf{4} &\bf{2}| &1 &0 &\quad & 4 &|\bf{3} &\bf{2}| &0 &1 \\
\hline 4 &|0 &1| &2 &3 &\quad & 2 &|1 &3| &4 &0
   \end{array}.
$$
Consider  $2$-element subsets of the $5$-element set with
characteristic functions $(01100)$ and $(00110)$. The first subset
has label $5$ in the lexicographic order and the second subset has
label $8$. The  $2$-cube  corresponding to the  subset $(00110)$ of
rows and the subset  $(01100)$ of columns consists of layers
$\begin{array}{cc}
3&4 \\
4&2
\end{array}$ and  $\begin{array}{cc}
4&0 \\
3&2
\end{array}$. This $2$-cube has $\{3,2\}$, $\{4,3\}$, $\{4,0\}$ and $\{2,4\}$ ranges
of transversals. Then $T(A)(8,5)=4$.

The following proposition can be derived from the pigeonhole
principle.

\begin{proposition}\label{PPVprop5}
Let $A$ be two layers of a layer-latin cube $X$ of order $5$ and let
$A'$ be the other three layers of $X$. If there exist $i,j\in
\{1,\dots,10\}$ such that $T(A)(i,j)+T(A')(i,j)\geq 11$, then $X$
contains a transversal.
\end{proposition}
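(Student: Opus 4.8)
The plan is to fix a pair of labels $i,j \in \{1,\dots,10\}$ with $T(A)(i,j) + T(A')(i,j) \geq 11$ and to extract a transversal of the whole cube $X$ from the $5 \times 5$ square retract $R$ of $X$ obtained by fixing the first coordinate-selection to the $i$th subset of rows and the second to the $j$th subset of columns — more precisely, from the corresponding $3 \times 5 \times 2$ and $2 \times 5 \times 3$ sub-parallelepipeds of the two halves. The key observation is that $i$ labels a $2$-element subset on one side and its $3$-element complement on the other side, so fixing $i$ on the rows of $A$ (a $2$-layer cuboid) gives a $2$-cube, while fixing the label $i$ on the rows of $A'$ (a $3$-layer cuboid) gives a $3$-cube whose two ``row-planes'' are exactly the complement; similarly for $j$ on the columns. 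Thus the $2$-cube $B$ of $A$ and the $3$-cube $B'$ of $A'$ determined by $(i,j)$ together fill out a $5 \times 2 \times 2$-type and $5 \times 3 \times 3$-type object sitting inside $X$, and by Proposition \ref{PPVprop4} (viewing this as a diagonal latin-layer retract) any transversal built from a transversal of $B$ together with a transversal of $B'$ whose ranges are disjoint extends to a transversal of $X$.

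First I would make the counting precise: $B$ has a set $\mathcal{R}(B)$ of at most $4$ distinct ranges, each range a $2$-element subset of $Q_5$, and $B'$ has a set $\mathcal{R}(B')$ of ranges, each a $3$-element subset of $Q_5$; by definition $T(A)(i,j) = |\mathcal{R}(B)|$ and $T(A')(i,j) = |\mathcal{R}(B')|$. A $2$-element range $S$ from $\mathcal{R}(B)$ and a $3$-element range $S'$ from $\mathcal{R}(B')$ are \emph{compatible} exactly when $S \cap S' = \emptyset$, i.e. when $S' = Q_5 \setminus S$. So each $2$-element range $S$ is compatible with at most one $3$-element range, namely its complement. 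Therefore, if $\mathcal{R}(B)$ and $\mathcal{R}(B')$ fail to produce a compatible pair, then the complements of the ranges in $\mathcal{R}(B)$ are all absent from $\mathcal{R}(B')$; since there are only $10$ possible $3$-element subsets of $Q_5$, this forces $|\mathcal{R}(B')| \leq 10 - |\mathcal{R}(B)|$, i.e. $T(A)(i,j) + T(A')(i,j) \leq 10$. Contrapositively, the hypothesis $T(A)(i,j) + T(A')(i,j) \geq 11$ guarantees a range $S \in \mathcal{R}(B)$ whose complement lies in $\mathcal{R}(B')$.

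Given such an $S$, pick a transversal of $B$ with range $S$ and a transversal of $B'$ with range $Q_5 \setminus S$; their cells together occupy the $2$ chosen rows and $3$ chosen complementary rows (so all $5$ row-hyperplanes once each), likewise all $5$ column-hyperplanes once each, and their $5$ symbols are $S \sqcup (Q_5 \setminus S) = Q_5$, all distinct. This is exactly a transversal of the square retract $R$, hence by Proposition \ref{PPVprop4} a transversal of $X$. The only point that needs care — and the one I would write out most carefully — is the bookkeeping in the previous paragraph: checking that the label convention (a $2$-set and its $3$-set complement share a label) makes ``the $i$th $2$-cube of $A$'' and ``the $i$th $3$-cube of $A'$'' sit in genuinely complementary row-positions of $X$, so that a transversal of $B$ and a transversal of $B'$ can be concatenated into a single diagonal of $X$ hitting each hyperplane at most once. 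This is really the pigeonhole step the authors advertise; everything after it is immediate from Proposition \ref{PPVprop4}.
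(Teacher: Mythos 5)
Your argument is correct and is exactly the pigeonhole argument the paper alludes to (the paper states the proposition without proof): the ten $2$-subsets of $Q_5$ pair off with their complements, so $T(A)(i,j)+T(A')(i,j)\geq 11$ forces some $2$-range of the $2$-cube to be complementary to some $3$-range of the $3$-cube, and the union of the two corresponding sub-transversals meets every layer, row, column and symbol exactly once. The only cosmetic slip is the framing via a ``square retract'' $R$ and Proposition~\ref{PPVprop4} --- the five combined cells form a transversal of $X$ directly by definition, no retract is needed --- but your explicit verification in the final paragraph already establishes this.
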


Now we will study some properties of  ranges of transversals in
$2$-cubes.
 Any $2$-cube has two
layers, where every layer is a latin subsquare. Every subsquare
consists of two columns. Let $\alpha$ be a column $\begin{array}{c}
a \\
b
\end{array}$. By $\overline{\alpha}$  we designate a column
$\begin{array}{c}
b \\
a
\end{array}$.

The following properties of $2$-cubes can be verified
straightforwardly.

\begin{proposition}\label{PPVprop2cube}

Suppose that a $2$-cube $F$ is contained in a layer-latin
parallelepiped of size $2\times 5\times 2$. Then $F$ satisfies the
following properties:

$(1)$ If $F$ does not have transversals and it has the first layer
$\alpha\beta$, then the second layer of $F$ is
$\overline{\beta}\overline{\alpha}$.

$(2)$ If $F$ does not have  transversals, then any adjacent $2$-cube
has $4$ transversals.

$(3)$ If $F$ has $4$ transversals with the same range, then $F$
consists of layers of types $\alpha\overline{\alpha}$ and
$\overline{\alpha}\alpha$.

$(4)$ If $F$ has $4$ transversals and contains $5$ different
symbols, then $F$ has transversals of at least $3$ ranges.

$(5)$ If $F$ has $4$ transversals with two different ranges and the
second columns of  layers of $F$ have types $\beta$ and
$\overline{\beta}$, then $F$ consists of layers of types
$\alpha\beta$ and $\overline{\alpha}\overline{\beta}$ or $F$ is
constituted by two layers, where the first layer is
 $\begin{array}{cc}
b&a \\
a&b
\end{array}$   and another layer  is
 $\begin{array}{cc}
a&b \\
c&a
\end{array}$ or $\begin{array}{cc}
c&b \\
b&a
\end{array}$.

\end{proposition}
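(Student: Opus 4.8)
The plan is to prove each of the five items by a direct but careful case analysis on the structure of a $2$-cube sitting inside a layer-latin parallelepiped of size $2\times5\times2$, exploiting the fact that both layers of such a $2$-cube are $2\times2$ latin subsquares of order-$5$ latin squares and hence each layer is a pair of columns drawn from the same pair of rows of a latin square, with distinct entries in each row and each column. The key combinatorial input is that a $2$-cube has exactly four diagonals: if the layers are $\alpha\beta$ (first layer) and $\gamma\delta$ (second layer) with columns $\alpha=\binom{a_1}{a_2}$, etc., then the diagonals pair the top-left of one layer with the bottom-right of the other, so the four diagonals have value-sets $\{a_1,\delta_2\},\{a_1,\gamma_2'?\}$ — more precisely $\{(\alpha\text{-top},\delta\text{-bottom}),(\beta\text{-top},\gamma\text{-bottom}),(\gamma\text{-top},\beta\text{-bottom}),(\delta\text{-top},\alpha\text{-bottom})\}$. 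I would set up this notation once and then read off everything.

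For (1), if $F=\alpha\beta\,/\,\gamma\delta$ has no transversal, then each of the four diagonals above repeats a symbol; since within a layer the two entries of a column are distinct, the repetition must come from matching a top entry of one column with the bottom entry of a column in the other layer. Writing $\alpha=\binom ab$, $\beta=\binom cd$ and using that $\{a,b,c,d\}$-constraints force $a\neq c$, $b\neq d$, one checks that the only way all four cross-diagonals repeat is $\gamma=\overline\beta=\binom dc$ and $\delta=\overline\alpha=\binom ba$; this is the unique assignment making each diagonal $\{x,x\}$. For (2), given $F$ has no transversal, by (1) its layers are $\alpha\beta$ and $\overline\beta\,\overline\alpha$; an adjacent $2$-cube $F'$ shares one column-position or one row-pair with $F$, and I would list the (few) ways $F'$ overlaps $F$ inside the parallelepiped and verify in each case that $F'$ cannot itself be of the forbidden shape from (1), hence $F'$ has all four diagonals transversal. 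For (3), if all four diagonals have the \emph{same} range $\{x,y\}$, then every diagonal entry lies in $\{x,y\}$, so the whole $2$-cube uses only the two symbols $x,y$; being layer-latin this forces each layer to be $\binom xy\binom yx$ or its reverse, and matching up the diagonal-range condition pins the two layers to $\alpha\overline\alpha$ and $\overline\alpha\alpha$ with $\alpha=\binom xy$. Items (4) and (5) are finer: for (4) I would argue that with $5$ distinct symbols and $4$ transversals, having only $2$ ranges would force (by counting which symbol-pairs the four diagonals realize) a coincidence that collapses the symbol count below $5$ or violates latin-ness of a layer; for (5) I would start from "$4$ transversals, two ranges, second columns $\beta,\overline\beta$" and run the diagonal-value bookkeeping to conclude the first columns are either $\alpha,\overline\alpha$ (giving layers $\alpha\beta$ and $\overline\alpha\,\overline\beta$) or fall into the two listed exceptional $2\times2$ patterns.

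Concretely, I would organize the proof as a single lemma-style computation: fix the generic layers $\begin{pmatrix}a&c\\b&d\end{pmatrix}$ and $\begin{pmatrix}a'&c'\\b'&d'\end{pmatrix}$ with the latin constraints $a\neq b$, $c\neq d$, $a\neq c$, $b\neq d$ (and likewise with primes), write out the four diagonal value-sets $\{a,d'\},\{c,b'\},\{a',d\},\{c',b\}$, and then for each item impose the hypothesis ("no transversal" means all four sets are singletons; "$4$ transversals of $k$ ranges" means all four sets are doubletons taking $k$ distinct values; etc.) and solve. Each item reduces to a handful of equalities among $a,b,c,d,a',b',c',d'$ and the conclusion is read off directly — this is exactly the "can be verified straightforwardly" the statement advertises.

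The main obstacle is purely organizational rather than deep: keeping the bookkeeping of the four diagonal-value-sets consistent across all five items without the case split exploding, and correctly handling the degenerate sub-cases in (5) where a symbol repeats between the two layers (which is what produces the two non-$\alpha\beta/\overline\alpha\overline\beta$ exceptional configurations). I would mitigate this by proving (1) and (3) first as the two "rigid" cases, then deriving (2) from (1), and finally using the structure from (1) and (3) to constrain the analysis of (4) and (5) so that only a small, explicitly enumerable set of $2\times2$ patterns over a $5$-symbol alphabet needs to be checked by hand.
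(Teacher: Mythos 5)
The paper gives no proof of this proposition (it is stated as "can be verified straightforwardly"), so the only question is whether your verification plan is sound. Your framework --- writing the two layers as $\bigl(\begin{smallmatrix}a&c\\b&d\end{smallmatrix}\bigr)$, $\bigl(\begin{smallmatrix}a'&c'\\b'&d'\end{smallmatrix}\bigr)$ and reading everything off the four diagonal value-sets $\{a,d'\},\{c,b'\},\{b,c'\},\{d,a'\}$ together with the row/column distinctness constraints --- is exactly the right one, and it does deliver (1), (3), (4) and (5). In particular your counting argument for (4) (two $2$-element ranges can cover at most $4$ symbols, but every entry lies in some diagonal, so $5$ symbols force $\ge 3$ ranges) is clean, and the case analysis you describe for (5) does terminate in the two exceptional patterns.

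The one genuine gap is in your derivation of item (2). You argue that an adjacent $2$-cube $F'$ "cannot itself be of the forbidden shape from (1), \emph{hence} $F'$ has all four diagonals transversal." That inference is false: the negation of the shape in (1) only guarantees that \emph{at least one} diagonal of $F'$ is a transversal, whereas (2) asserts all four are (e.g.\ layers $\bigl(\begin{smallmatrix}0&1\\1&2\end{smallmatrix}\bigr)$, $\bigl(\begin{smallmatrix}2&0\\0&1\end{smallmatrix}\bigr)$ are not of the forbidden shape yet have only $3$ transversals). The correct argument must use the ambient parallelepiped: inside a $2\times5\times2$ parallelepiped a $2$-cube is determined by a choice of $2$ of the $5$ rows, so an adjacent $F'$ shares exactly one row with $F$. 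By (1) the shared row contributes entries $a,c$ to layer $1$ and $d,b$ to layer $2$ (with $\alpha=\binom ab$, $\beta=\binom cd$), while the new row contributes entries $e,f$ and $g,h$; each diagonal of $F'$ pairs an entry of the shared row in one layer with an entry of the new row in the \emph{same column} of the other layer's $5\times2$ latin rectangle is not quite it --- rather, each of the four diagonals $\{a,h\},\{c,g\},\{e,b\},\{f,d\}$ pairs two entries lying in a common column of one of the two $5\times2$ latin rectangles of the parallelepiped (e.g.\ $h$ sits in the same column of layer $2$ as $b$ and $a$, so $h\ne a$), and column-distinctness of those latin rectangles forces all four pairs to be doubletons. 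With that replacement, item (2) is correct and the rest of your plan goes through.
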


\begin{proposition}\label{PPVprop2cube2}
Suppose that a $2$-cube is chosen from a layer-latin parallelepiped
of size $2\times 5\times 2$. If this $2$-cube does not have
transversals, then any adjacent $2$-cube has transversals of  at
least $2$ different ranges.
\end{proposition}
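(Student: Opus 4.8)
The plan is to argue by contradiction, drawing on parts (1), (2), (3) of Proposition~\ref{PPVprop2cube}. Let $F$ be the transversal-free $2$-cube and let $G$ be any $2$-cube adjacent to it; both lie in one layer-latin parallelepiped of size $2\times 5\times 2$, so they share the same two layers and the same two columns and occupy two of the five rows each. By Proposition~\ref{PPVprop2cube}(2), $G$ has $4$ transversals, and since a $2$-cube has only $4$ diagonals, \emph{every} diagonal of $G$ is a transversal. Hence the assertion ``$G$ has transversals of at least $2$ ranges'' is equivalent to ``the $4$ diagonals of $G$ are not all of one range'', and so I assume for contradiction that they all share a single range. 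By Proposition~\ref{PPVprop2cube}(3) the two layers of $G$ then have types $\alpha\overline{\alpha}$ and $\overline{\alpha}\alpha$ for some column $\alpha=\bigl(\begin{smallmatrix}a\\b\end{smallmatrix}\bigr)$ with $a\neq b$.

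Next I would normalize the configuration. Since $F$ and $G$ are distinct adjacent $2$-cubes of the same parallelepiped, their row-sets meet in exactly one row. Reflecting rows and columns and swapping the two layers if necessary (operations preserving transversals, adjacency, and all hypotheses), I may assume that the first layer of $G$ is $\bigl(\begin{smallmatrix}a&b\\b&a\end{smallmatrix}\bigr)$, its second layer is $\bigl(\begin{smallmatrix}b&a\\a&b\end{smallmatrix}\bigr)$, and the row shared with $F$ is the bottom row of these displays; listing that shared row first in $F$, the top row of $F$ reads $(b,a)$ in the first layer and $(a,b)$ in the second. Write the bottom row of $F$ as $(p,q)$ in the first layer. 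Because the first layer of the ambient parallelepiped is a $5\times 2$ latin rectangle, each of its columns has five distinct entries, and the entries already present in a given column are $a,b$ (the two rows of $G$) together with the new entry coming from $F$; hence $p,q\notin\{a,b\}$ and $p\neq q$.

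Now I would apply Proposition~\ref{PPVprop2cube}(1) to $F$. Its first layer is $\bigl(\begin{smallmatrix}b&a\\p&q\end{smallmatrix}\bigr)$, i.e.\ of type $\mu\nu$ with $\mu=\bigl(\begin{smallmatrix}b\\p\end{smallmatrix}\bigr)$ and $\nu=\bigl(\begin{smallmatrix}a\\q\end{smallmatrix}\bigr)$; since $F$ has no transversal, its second layer must equal $\overline{\nu}\,\overline{\mu}=\bigl(\begin{smallmatrix}q&p\\a&b\end{smallmatrix}\bigr)$, whose top row is $(q,p)$. But by the normalization the top row of $F$ in the second layer is $(a,b)$, forcing $q=a$ and $p=b$, which contradicts $p,q\notin\{a,b\}$. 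This contradiction shows the $4$ diagonals of $G$ cannot all have the same range, so $G$ has transversals of at least $2$ different ranges, as required.

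The only point needing genuine care is the normalization step: keeping straight which of the five rows is the one shared by $F$ and $G$, the order in which rows and columns are recorded in the $2\times 2$ displays, and which of the two common layers is assigned the type $\alpha\overline{\alpha}$. A careless choice there risks permuting away the very case that yields the contradiction. Everything else is a direct reading-off from parts (1), (2), (3) of Proposition~\ref{PPVprop2cube} together with the column-distinctness property of the $5\times 2$ latin rectangle.
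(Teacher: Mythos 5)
Your argument is correct and follows essentially the same route as the paper's: both proofs assume the adjacent $2$-cube has a single range, invoke Proposition~\ref{PPVprop2cube}(2) to get four transversals and Proposition~\ref{PPVprop2cube}(3) to force the layers into the $\alpha\overline{\alpha}$/$\overline{\alpha}\alpha$ form, combine this with the structure of the transversal-free $2$-cube from Proposition~\ref{PPVprop2cube}(1), and reach a contradiction with the distinctness of entries in a column of the $5\times 2$ latin rectangle. The only difference is the order of normalization (you fix $G$ first and then constrain $F$, while the paper fixes $F$ first), which is immaterial.
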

\begin{proof} By Proposition \ref{PPVprop2cube} (1), we obtain that the
first layer of the parallelepiped has columns $\alpha\beta\gamma$,
and the second layer of the parallelepiped has columns
$\overline{\beta}\overline{\alpha}\delta$.  By Proposition
\ref{PPVprop2cube} (2),  an adjacent $2$-cube has a unique range of
transversals.  Hence, it holds $\overline{\gamma}=\delta$. Moreover,
$\gamma=\overline{\alpha}$ and $\delta=\beta=\alpha$ or
$\gamma=\overline{\beta}$ and $\delta=\alpha=\beta$ by Proposition
\ref{PPVprop2cube} (3). Two identical columns in the layer
contradict the definition of layer-latin parallelepiped. \end{proof}

By Propositions \ref{PPVprop5} and \ref{PPVprop2cube2}, we have

\begin{corollary}\label{PPVprop2cube21}
Let $X$ be a layer-latin cube of order $5$,  and let $A'$ be some
three layers of $X$. If $T(A')(i,j)+T(A')(i,k)\geq 19$ or
$T(A')(j,i)+T(A')(k,i)\geq 19$ for some $i$ and some adjacent $j$
and $k$, then $X$ has transversal.
\end{corollary}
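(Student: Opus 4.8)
The plan is to combine the two preceding facts in a direct counting argument. By Proposition~\ref{PPVprop5}, if we can locate a pair of labels $i,j\in\{1,\dots,10\}$ with $T(A)(i,j)+T(A')(i,j)\geq 11$ for some partition of the five layers of $X$ into two layers $A$ and three layers $A'$, then $X$ has a transversal. So first I would split off a single layer from $A'$ in two ways: write $A'$ as the union of a two-layer piece $B$ and the remaining layer, and also as the union of another two-layer piece $B'$ and a (different) remaining layer, chosen so that $B$ and $B'$ together use all the layers appropriately — more precisely, I want to produce from the three-layer cuboid $A'$ two different two-layer sub-cuboids whose $T$-values at suitable coordinates are controlled by $T(A')$.

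The key step is the following bookkeeping. Fix a label $i$ and two adjacent labels $j$ and $k$. The hypothesis $T(A')(i,j)+T(A')(i,k)\geq 19$ forces, in particular, that at least one of the two values is large; since each $T$-entry is at most $4$ and the sum is at least $19>16$ is impossible, the correct reading is that we are counting ranges across the $3$-cubes and $19$ is the right threshold after accounting for the layering structure. I would argue: in the $3$-cube indexed by $(i,j)$ the three layers split (via the three pairs of layers) into three $2$-cubes, and the transversal-range count of the $3$-cube relates to those of its $2$-cube faces; a large $T(A')(i,j)$ together with a large $T(A')(i,k)$ — where $j,k$ adjacent means the corresponding $2$-element column-sets overlap — forces, via Proposition~\ref{PPVprop2cube2}, that \emph{every} relevant $2$-cube face has transversals of at least two different ranges, in particular none of them is transversal-free. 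Then each two-layer restriction $A$ of $X$ obtained by deleting one layer of $A'$ inherits $T(A)(i,j)\geq 2$ at the coordinate in question, so $T(A)(i,j)+T(A')(i,j)\geq 2+ (\text{something}\geq 9)$; reorganising, one of the two partitions yields a pair meeting the threshold $11$ of Proposition~\ref{PPVprop5}, and we conclude by that proposition. The symmetric statement with rows and columns exchanged is identical after transposing.

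Concretely, the steps in order are: (1) assume $T(A')(i,j)+T(A')(i,k)\geq 19$ for adjacent $j,k$ (the column case being symmetric); (2) observe that since each entry of $T(A')$ is at most $10$ and, for genuine $3$-cuboids arising from a layer-latin cube, at most a smaller explicit bound, the inequality forces both $T(A')(i,j)$ and $T(A')(i,k)$ to be near-maximal, hence neither the $(i,j)$- nor the $(i,k)$-$3$-cube is transversal-free; (3) use Proposition~\ref{PPVprop2cube2} (applied to the $2$-cube faces obtained from adjacent pairs of layers inside $A'$, using that $j$ and $k$ are adjacent) to conclude that the $2$-cube faces cannot all be degenerate — in particular for at least one choice of a layer to delete from $A'$, the resulting two-layer cuboid $A$ has $T(A)(i,j)\geq 2$; (4) add this to $T(A')(i,j)\geq 9$ to get $T(A)(i,j)+T(A')(i,j)\geq 11$; (5) invoke Proposition~\ref{PPVprop5} to produce a transversal of $X$. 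The main obstacle I expect is step~(3): pinning down exactly which $2$-cube faces are forced to be non-degenerate and checking that the arithmetic of ``$19$'' is precisely the threshold that makes Proposition~\ref{PPVprop2cube2} applicable to at least one of the two reorganisations — this requires carefully tracking how a range-count of a $3$-cube distributes among its three $2$-cube faces, and is the only place where the specific constant in the statement is used.
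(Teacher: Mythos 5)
Your overall instinct --- that the corollary follows by combining Proposition \ref{PPVprop5} with Proposition \ref{PPVprop2cube2} --- is exactly the paper's intent, but the way you set up the combination does not work. The first problem is the identity of $A$. Proposition \ref{PPVprop5} requires $A$ to be the \emph{two layers of $X$ complementary to} $A'$, so that the $2$-cube of $A$ and the $3$-cube of $A'$ at a common label $(i,j)$ occupy complementary rows, columns and layers, and matching ranges (a $2$-set and its complementary $3$-set carry the same label) glue to a transversal of $X$. In your steps you instead take $A$ to be a two-layer piece \emph{inside} $A'$ (``obtained by deleting one layer of $A'$''); for such an $A$ the pair $(A,A')$ does not partition the layers of $X$, and Proposition \ref{PPVprop5} simply does not apply. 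Relatedly, you apply Proposition \ref{PPVprop2cube2} to $2$-cube ``faces'' of the $3$-cubes of $A'$, whereas the adjacency in the hypothesis is between the labels $j$ and $k$, i.e.\ between the two $2$-cubes of the \emph{complementary} two-layer cuboid $A$ at positions $(i,j)$ and $(i,k)$ (they share one column inside the $2\times 5\times 2$ parallelepiped cut out by the $i$th row pair). Finally, your step (3) asserts $T(A)(i,j)\geq 2$ without a trigger: Proposition \ref{PPVprop2cube2} only bounds an \emph{adjacent} $2$-cube from below once you know some $2$-cube is transversal-free, and nothing in your argument produces such a $2$-cube. You correctly flag this step as the weak point; as written it fails.

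The arithmetic that makes $19$ the right constant is a short case analysis you are missing. From $T(A')(i,j)+T(A')(i,k)\geq 19$ and the bound $T(A')\leq 10$ entrywise, both entries are $\geq 9$ and at least one equals $10$; say $T(A')(i,j)=10$. If the $2$-cube of the complementary $A$ at $(i,j)$ has a transversal, then $T(A)(i,j)+T(A')(i,j)\geq 1+10=11$ and Proposition \ref{PPVprop5} finishes. If it has none, then by Proposition \ref{PPVprop2cube2} the adjacent $2$-cube at $(i,k)$ satisfies $T(A)(i,k)\geq 2$, whence $T(A)(i,k)+T(A')(i,k)\geq 2+9=11$, and again Proposition \ref{PPVprop5} applies. (Equivalently: if $X$ had no transversal, Proposition \ref{PPVprop5} would force $T(A)(i,j)+T(A)(i,k)\leq 20-19=1$, so one of these two $2$-cubes would be transversal-free while its adjacent one had at most one range, contradicting Proposition \ref{PPVprop2cube2}.) The row/column-swapped case is identical.
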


\begin{proposition}\label{PPVprop2cube3}
If  a layer-latin parallelepiped of size $2\times 5\times 2$
contains two nonadjacent $2$-cubes with no transversals, then any
other $2$-cube from the  parallelepiped has transversals of  at
least $3$ different ranges.
\end{proposition}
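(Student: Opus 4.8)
The plan is to use Proposition~\ref{PPVprop2cube} (1) to determine the parallelepiped up to the placement of the two transversal-free $2$-cubes, and then to inspect the eight remaining $2$-cubes in two families. Relabel the rows as $1,\dots,5$ so that the two transversal-free $2$-cubes sit on the row-pairs $\{1,2\}$ and $\{3,4\}$; these are disjoint because the two $2$-cubes are nonadjacent, so row $5$ is the leftover one. Write $u,v$ for the two columns of the first layer and $u',v'$ for the two columns of the second layer, each regarded as a function from the rows to $Q_5$; the latin-rectangle conditions say that $u,v,u',v'$ are permutations of $Q_5$ and that $u_i\neq v_i$, $u'_i\neq v'_i$ for every row $i$. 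Applying Proposition~\ref{PPVprop2cube} (1) to the $2$-cube on $\{1,2\}$ gives $u'_1=v_2$, $u'_2=v_1$, $v'_1=u_2$, $v'_2=u_1$, and applying it to the $2$-cube on $\{3,4\}$ gives $u'_3=v_4$, $u'_4=v_3$, $v'_3=u_4$, $v'_4=u_3$. Since $u'$ and $v'$ are permutations their row-$5$ values are then forced, $u'_5=v_5$ and $v'_5=u_5$; equivalently, with $\sigma=(1\,2)(3\,4)$ denoting the corresponding permutation of the rows, the second layer is $(v\circ\sigma \mid u\circ\sigma)$.

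In this notation the $2$-cube on rows $i$ and $j$ has the four diagonal ranges $\{u_i,u_{\sigma(j)}\}$, $\{v_i,v_{\sigma(j)}\}$, $\{u_j,u_{\sigma(i)}\}$, $\{v_j,v_{\sigma(i)}\}$. Whenever $\{i,j\}$ is neither $\{1,2\}$ nor $\{3,4\}$ the $2$-cube is adjacent to one of the two transversal-free ones, so by Proposition~\ref{PPVprop2cube} (2) all four of its diagonals are transversals; it therefore remains to show that these four ranges do not all lie in a two-element collection. The eight remaining row-pairs form two families: (I) $\{1,3\},\{1,4\},\{2,3\},\{2,4\}$ and (II) $\{1,5\},\{2,5\},\{3,5\},\{4,5\}$. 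The hypotheses are invariant under swapping $1\leftrightarrow2$, under swapping $3\leftrightarrow4$, and under interchanging the pairs $\{1,2\}$ and $\{3,4\}$, and these symmetries act transitively on each family, so it suffices to treat one representative of each.

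For the representative $\{1,3\}$ of family (I) the $2$-cube carries the values $u_1,v_1,u_3,v_3$ in its first layer and $v_2,u_2,v_4,u_4$ in its second, so its set of symbols is $\{u_1,u_2,u_3,u_4\}\cup\{v_1,v_2,v_3,v_4\}=(Q_5\setminus\{u_5\})\cup(Q_5\setminus\{v_5\})$, which equals $Q_5$ because $u_5\neq v_5$. Thus this $2$-cube contains all five symbols and has four transversals, so by Proposition~\ref{PPVprop2cube} (4) it has transversals of at least three ranges.

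The representative $\{1,5\}$ of family (II) is the delicate case, since such a $2$-cube may involve only three symbols and Proposition~\ref{PPVprop2cube} (4) is then unavailable; one must argue with the ranges directly, and this is the step I expect to be the main obstacle. Using $\sigma(1)=2$ and $\sigma(5)=5$, the four ranges are $R_1=\{u_1,u_5\}$, $R_2=\{v_1,v_5\}$, $R_3=\{u_5,u_2\}$, $R_4=\{v_5,v_2\}$. Because $u$ and $v$ are injective and $1\neq2$, we have $R_1\neq R_3$ and $R_2\neq R_4$, so at least two ranges always occur, and exactly two occur only if $\{R_1,R_3\}=\{R_2,R_4\}$, i.e. if ($R_1=R_2$ and $R_3=R_4$) or ($R_1=R_4$ and $R_3=R_2$). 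In the first case, since $u_x\neq v_x$ for every row $x$, the equality $R_1=R_2$ forces $u_1=v_5$ while $R_3=R_4$ forces $u_2=v_5$, so $u_1=u_2$, a contradiction; in the second case the same reasoning gives $u_1=v_5=u_2$, again a contradiction. Hence every remaining $2$-cube has transversals of at least three different ranges, which is the assertion.
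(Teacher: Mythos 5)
Your proof is correct and takes essentially the same route as the paper: Proposition~\ref{PPVprop2cube}(1), applied to both transversal-free $2$-cubes, pins down the second layer; the remaining $2$-cubes split into those avoiding the leftover index (handled, exactly as in the paper, by showing they contain all five symbols and invoking Proposition~\ref{PPVprop2cube}(4)) and those containing it. The only deviation is that for the latter family the paper appeals to parts (3) and (5) of Proposition~\ref{PPVprop2cube}, whereas you rule out having only two ranges by a direct computation with the explicit range formulas $\{u_i,u_{\sigma(j)}\}$, etc.; both arguments are sound.
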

\begin{proof} All lines of length $5$ of the layer-latin parallelepiped
contains all elements from $Q_5$. Consequently, the layers of the
parallelepiped has types  $\alpha\beta\gamma\delta\epsilon$ and
$\overline{\beta}\overline{\alpha}\overline{\delta}\overline{\gamma}\overline{\epsilon}$
by Proposition \ref{PPVprop2cube} (1). If columns $\alpha, \beta,
\gamma, \delta$ contain only $4$ symbols, then $\epsilon$ consists
of the fifth symbol. It contradicts the definition of a layer-latin
parallelepiped. Consider $2$-cubes avoiding column $\epsilon$.
Without loss of generality, take a $2$-cube with layers
$\alpha\gamma$ and $\overline{\beta}\overline{\delta}$. This
$2$-cube contains $5$ symbols, so it has transversals of at least
$3$ different ranges by Proposition \ref{PPVprop2cube}(4). Consider
$2$-cubes containing column $\epsilon$. Without loss of generality,
take a $2$-cube with layers $\alpha\epsilon$ and
$\overline{\beta}\overline{\epsilon}$. If this $2$-cube contains
transversals of $1$ or $2$ different ranges, then $\alpha=\beta$ or
$\alpha=\begin{array}{c}
b \\
a
\end{array}$, $\overline{\beta}=\begin{array}{c}
a \\
c
\end{array}$ or $\overline{\beta}=\begin{array}{c}
c \\
b
\end{array}$
by Proposition \ref{PPVprop2cube} (3,5). All these cases contradict
the definition of a layer-latin parallelepiped, because columns
$\alpha$ and $\beta$ contain equal symbols in the first or second
positions. Then every $2$-cube with column $\epsilon$ has
transversals of  at least $3$ different ranges. \end{proof}

By Propositions \ref{PPVprop5} and \ref{PPVprop2cube3}, we have

\begin{corollary}\label{PPVprop2cube22}
Let $X$ be a layer-latin $3$-cube of order $5$,  and let $A'$ be
some three layers of $X$. If any row $T(A')(i,*)$ or any column
$T(A')(*,i)$ of $T(A)$ contains  elements $10, 10$,  and $y$, where
$y\geq 8$,  then $X$ has a transversal.
\end{corollary}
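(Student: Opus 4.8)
The plan is to argue by contradiction, using Proposition \ref{PPVprop5} to control the table $T(A)$ of the complementary pair of layers and then invoking Proposition \ref{PPVprop2cube3} to reach a contradiction. Let $A$ be the two layers of $X$ that do not belong to $A'$. Assume $X$ has no transversal; then by the contrapositive of Proposition \ref{PPVprop5} we have $T(A)(a,b)+T(A')(a,b)\le 10$ for all labels $a,b\in\{1,\dots,10\}$.

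First I would treat the row case. Fix $i$ so that the row $T(A')(i,*)$ contains the entries $10$, $10$, $y$ with $y\ge 8$, occurring in three distinct columns $j_1,j_2,j_3$ (since the labels are distinct they correspond to three distinct $2$-element column subsets $S_{j_1},S_{j_2},S_{j_3}$ of $Q_5$, the $3$-element subsets used for $A'$ being their complements). The inequality above forces $T(A)(i,j_1)=T(A)(i,j_2)=0$ and $T(A)(i,j_3)\le 10-y\le 2$. That is, inside $A$ the $2$-cube on the row pair $S_i$ and the column pair $S_{j_1}$ has no transversal, and likewise for $S_{j_2}$, while the $2$-cube on $S_i$ and $S_{j_3}$ has transversals of at most $2$ ranges.

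Next I would pass to the layer-latin parallelepiped $P$ of size $2\times 2\times 5$ obtained from $A$ by keeping only the two rows in $S_i$; this is a $2\times 5\times 2$ parallelepiped after transposing its two non-layer coordinates, and layer-latin structure and transversals are symmetric in rows and columns, so Propositions \ref{PPVprop2cube} and \ref{PPVprop2cube3} apply to $P$. The $2$-cubes of $P$ are exactly the $2$-cubes of $A$ with row pair $S_i$, indexed by the chosen column pair, and two of them are adjacent precisely when the two column pairs intersect. Since a $2$-cube without transversals forces every adjacent $2$-cube to have $4$ transversals by Proposition \ref{PPVprop2cube}(2), the two transversal-free $2$-cubes at $S_{j_1}$ and $S_{j_2}$ cannot be adjacent, that is $S_{j_1}\cap S_{j_2}=\emptyset$. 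Applying Proposition \ref{PPVprop2cube3} to $P$ with this pair of nonadjacent transversal-free $2$-cubes shows that every other $2$-cube of $P$ — in particular the one on the column pair $S_{j_3}$ — has transversals of at least $3$ distinct ranges, so $T(A)(i,j_3)\ge 3$, contradicting $T(A)(i,j_3)\le 2$. Hence $X$ has a transversal. The column case follows in the same way after transposing $X$ in its last two coordinates, which transposes every $T$-table and preserves transversals.

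I expect the delicate points to be organisational rather than mathematical: checking that adjacency of $2$-cubes inside $P$ really amounts to the column pairs intersecting (so that Proposition \ref{PPVprop2cube}(2) genuinely forces non-adjacency of the two transversal-free $2$-cubes, which is what lets Proposition \ref{PPVprop2cube3} apply), and keeping straight the two labelling conventions — a $2$-element subset and its $3$-element complement sharing a label — so that Proposition \ref{PPVprop5} is invoked on the correct entries of $T(A)$ and $T(A')$. Both are routine once set up, and I do not anticipate a substantive obstacle.
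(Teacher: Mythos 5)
Your proposal is correct and follows essentially the same route as the paper, which derives the corollary from Proposition \ref{PPVprop5} (forcing $T(A)(i,j_1)=T(A)(i,j_2)=0$ and $T(A)(i,j_3)\le 2$) combined with Proposition \ref{PPVprop2cube3}; your explicit use of Proposition \ref{PPVprop2cube}(2) to show the two transversal-free $2$-cubes must be nonadjacent is exactly the step the paper leaves implicit. No issues.
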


\section{Listing of layer-latin cubes of order 5 with no transversals. Algorithms}

By the definitions of isotopy and parastrophy, we obtain the
following proposition.

\begin{proposition}\label{PPVprop00}
The number of isotopies acting on a $k$-layer $n$-cuboid of order
$q$ equals\\ $q!((q-1)!)^{n-1}k!$. The number of parastrophies
acting on a $k$-layer $n$-cuboid of order $q$ equals $n!$.
\end{proposition}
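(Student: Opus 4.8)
The plan is to read both numbers straight off the definitions of isotopy and parastrophy, so that the whole argument is a short count. It helps to regard a $k$-layer latin $n$-cuboid as (the graph of) a family $f_0,\dots,f_{k-1}$ of $(n-1)$-ary quasigroups of order $q$; the underlying relation then has $n+1$ coordinate positions — the layer position, of size $k$, together with the value position and the $n-1$ argument positions, each of size $q$.

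For the parastrophies I would argue as follows. A parastrophy permutes the $n+1$ coordinate positions and must again produce a $k$-layer latin $n$-cuboid with the layer position in its designated first place; hence it is forced to fix the layer position. When $k<q$ this is immediate, since that position alone has size $k$; in general it holds because only that position, when fixed, yields a layer, i.e., a latin $(n-1)$-cube, whereas fixing the value position or an argument position need not (for instance when two layers coincide). Conversely, each of the $n!$ permutations of the remaining value and argument positions is a parastrophy: it merely rearranges the $n$ coordinates of each layer, and a latin $(n-1)$-cube stays latin under any choice of its coordinate as output. Hence there are exactly $n!$ parastrophies.

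For the isotopies I would unwind the definition of an isotopy of a $k$-layer latin $n$-cuboid into its free pieces — a relabelling of the $q$ symbols, a relabelling of the $q$ positions in each of the $n-1$ argument directions, and a relabelling of the $k$ layers — and multiply the numbers of choices, obtaining $q!\cdot\big((q-1)!\big)^{n-1}\cdot k!$. The one point I expect to require attention is precisely this tally: one must check against the definition that each argument direction contributes a factor $(q-1)!$ while the symbols contribute the full $q!$ and the layers the full $k!$. Once that is confirmed the computation is immediate, the parastrophy count above already illustrating the style of reasoning.
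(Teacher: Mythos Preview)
Your approach---read both numbers off the definitions---is exactly the paper's; the paper offers nothing beyond the sentence ``By the definitions of isotopy and parastrophy, we obtain the following proposition.'' Your parastrophy count is complete and correct: of the $n+1$ coordinate positions the layer position must stay fixed (it alone has size $k$, and even when $k=q$ it is the only position whose hyperplanes are guaranteed to be latin $(n-1)$-cubes), while the remaining $n$ positions can be permuted freely, giving $n!$.

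The isotopy count, however, has a real gap, and you have put your finger on it without resolving it. Under the paper's own definition an isotopy of an $n$-ary quasigroup is a tuple $(\sigma_0,\sigma_1,\dots,\sigma_n)$ of arbitrary permutations of $Q_q$, ``generalized to $k$-layer latin $n$-cuboids straightforwardly''; the straightforward generalization gives one permutation of the $k$ layers, one permutation of $Q_q$ for the value, and one permutation of $Q_q$ for each of the $n-1$ argument directions, i.e.\ $k!\,(q!)^n$ isotopies, not $k!\,q!\,((q-1)!)^{n-1}$. Nothing in the stated definition singles out a factor $(q-1)!$ per argument direction. Your plan ``one must check against the definition that each argument direction contributes a factor $(q-1)!$'' will therefore fail if carried out literally. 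The discrepancy almost certainly reflects an unstated normalization the authors use in their representative lists (for example, fixing $\sigma_j(0)=0$ in each argument direction, or equivalently working with cuboids whose first layer is semi-reduced), which is consistent with the bound in the subsequent Corollary and with how $N_0$ is used in Algorithm~2; but that convention is never spelled out, so neither you nor the paper actually supplies the missing step.
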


\begin{corollary}
Cardinalities of  paratopy classes of $2$-layer latin cuboids are
not greater than $5!\cdot (4!)^2\cdot 2\cdot 3!$, cardinalities of
incomplete paratopy classes of $2$-layer latin cuboids are not
greater than $N_0=5!\cdot (4!)^2\cdot 4$.
\end{corollary}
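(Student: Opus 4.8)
The plan is to derive both bounds by a short counting argument from Proposition~\ref{PPVprop00}, using that a paratopy is a composition of an isotopy and a parastrophy, together with the orbit--stabilizer principle.

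First I would substitute $q=5$, $n=3$ (a cuboid is three-dimensional by convention) and $k=2$ into Proposition~\ref{PPVprop00}: this gives exactly $5!\cdot(4!)^{2}\cdot 2!=5!\cdot(4!)^{2}\cdot 2$ isotopies and $3!$ parastrophies acting on a $2$-layer latin cuboid of order $5$. The paratopies form a group, and every paratopy is a composition of an isotopy with a parastrophy; hence there are at most $5!\cdot(4!)^{2}\cdot 2\cdot 3!$ paratopies acting on $2$-layer latin cuboids of order $5$. A paratopy class is an orbit of this group on the set of $2$-layer latin cuboids of order $5$, and by orbit--stabilizer the cardinality of an orbit equals the index of a point stabilizer, so it does not exceed the order of the acting group; this gives the first bound.

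For incomplete paratopy classes I would rerun the argument with the parastrophy group replaced by its subgroup of coordinate permutations $\tau$ with $\tau(0)=0$. The step that needs care is counting this subgroup: a parastrophy must preserve the lengths of the coordinate axes, hence it fixes the layer axis (of length $k=2<5$) and merely permutes the $n=3$ axes of length $q$; imposing additionally $\tau(0)=0$ leaves exactly $(n-1)!=2$ admissible parastrophies. Multiplying by the isotopy count yields at most $5!\cdot(4!)^{2}\cdot 2\cdot 2=N_{0}$ incomplete paratopies, and the same orbit--stabilizer estimate bounds every incomplete paratopy class by $N_{0}$. The argument is otherwise pure bookkeeping; the one genuinely non-automatic observation is that the length-$k$ axis cannot be moved by a parastrophy, which is exactly why the incomplete parastrophy count is $(n-1)!=2$ and why $N_{0}$ is three times smaller than the full paratopy bound.
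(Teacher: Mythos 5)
Your proposal is correct and follows exactly the route the paper intends: substitute $q=5$, $n=3$, $k=2$ into Proposition~\ref{PPVprop00}, multiply the isotopy count by the number of admissible parastrophies ($3!$ in general, $(n-1)!=2$ once the value coordinate and the short layer axis are both pinned down), and bound each orbit by the order of the acting group. The paper gives no further argument for this corollary, so your write-up simply makes the intended bookkeeping explicit; nothing is missing.
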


\begin{proposition}[\cite{McKW}, Table 1]\label{PPVprop111}
There are $N_1=56\cdot 5!\cdot 4!$ different latin cubes of order
$5$.
\end{proposition}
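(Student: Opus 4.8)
\emph{Remark on status.} The statement is a census value recorded in \cite{MW} and \cite{McKW}, so the "proof" is ultimately an appeal to an exhaustive enumeration; what follows is how I would organise and independently verify that enumeration. The cleanest self-contained route combines the orbit--stabilizer theorem with Proposition~\ref{PPVprop1}. The paratopy group $\Gamma$ (isotopies composed with parastrophies, of the explicit order supplied by Proposition~\ref{PPVprop00} together with the definitions of isotopy and parastrophy) acts on the set $\mathcal{L}$ of all latin cubes of order $5$ with exactly the $15$ orbits listed in Proposition~\ref{PPVprop1}. Hence $|\mathcal{L}| = \sum_{i=1}^{15}|\Gamma|/|\Gamma_i|$, where $\Gamma_i$ is the autoparatopism group of a chosen representative $C_i$ of the $i$th paratopy class. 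So everything reduces to producing the $15$ representatives $C_1,\dots,C_{15}$ together with the orders $|\Gamma_1|,\dots,|\Gamma_{15}|$, and then checking that the resulting sum equals $56\cdot 5!\cdot 4!$.

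To obtain the representatives and their autoparatopism groups I would run a backtracking search that builds a latin cube of order $5$ cell by cell, at each step propagating the constraint "every axis-parallel line contains all five symbols" to prune branches, and rejecting isomorphic copies by maintaining a canonical form under the (large) group $\Gamma$. This simultaneously reconfirms that there are exactly $15$ paratopy classes and records each $|\Gamma_i|$; substituting into $\sum_{i=1}^{15}|\Gamma|/|\Gamma_i|$ then returns $56\cdot 5!\cdot 4!$. An equivalent and historically more standard variant is to enumerate directly the latin cubes in a fixed \emph{reduced} form (prescribed low-index lines put in natural order), count them --- obtaining $56$ --- and multiply by $5!\cdot 4!$, the size of each fibre of the reduction map; the two computations must agree.

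The genuine obstacle is the enumeration itself, not the closing arithmetic: the naive search tree over the $5^3$ cells is astronomically large, so the real work lies in choosing a good cell ordering, propagating the line constraints aggressively, and implementing efficient canonical-form based isomorph rejection under $\Gamma$. Once the $15$ representatives and their autoparatopism orders are in hand (as tabulated in \cite{McKW}), verifying $\sum_{i=1}^{15}|\Gamma|/|\Gamma_i| = 56\cdot 5!\cdot 4!$ is routine. As external sanity checks I would compare the output against the independently known enumeration of latin cubes of order $4$ in \cite{McKW} and against the classical counts of reduced latin squares of small order.
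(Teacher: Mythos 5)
The paper offers no proof here: the value is imported verbatim from \cite{McKW} (Table 1), and the word ``cubes'' in the statement is evidently a slip for ``squares'' --- $N_1$ is used in Algorithms 1 and 2 to index the latin \emph{squares} $B_2$, $B_3$ that are appended as single layers, and $56\cdot 5!\cdot 4!=161280$ is the classical count of latin squares of order $5$ ($56$ reduced squares, each fibre of the reduction map having size $5!\cdot 4!$). Your proposal takes the statement literally and tries to verify it as a count of $3$-dimensional latin cubes, and this is where it goes wrong in substance, not just in presentation.

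Concretely, your primary route --- orbit--stabilizer over the $15$ paratopy classes of latin cubes from Proposition~\ref{PPVprop1} --- counts the wrong objects: $\sum_{i=1}^{15}|\Gamma|/|\Gamma_i|$ would return the total number of $3$-dimensional latin cubes of order $5$, which is on the order of $10^9$, not $161280$. Your ``equivalent and historically more standard variant'' is in fact the latin-\emph{square} computation: the factor $5!\cdot 4!$ is the fibre size for reducing a $2$-dimensional array, whereas for a $3$-dimensional cube the corresponding fibre has size $5!\cdot(4!)^2$ and the number of reduced cubes is not $56$. So the two computations you assert ``must agree'' count different things and cannot agree; the claimed internal consistency check would fail and should have flagged the misidentification. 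The correct (and much easier) verification of the proposition as it is actually used in the paper is the classical one: enumerate the $56$ reduced latin squares of order $5$ and multiply by $5!\cdot 4!$; no search over $5^3$ cells, no canonical forms under the paratopy group of cubes, and no appeal to Proposition~\ref{PPVprop1} is needed.
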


By complete search, we find that

\begin{proposition}\label{PPVprop112}
There are $1625$ paratopy classes and $N_2=4427$ incomplete paratopy
classes of $2$-layer latin cuboids of order $5$.
\end{proposition}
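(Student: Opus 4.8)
The plan is to verify Proposition \ref{PPVprop112} by an exhaustive computer search, organized so that the number of candidate objects to be enumerated stays manageable. The final count — $1625$ paratopy classes and $N_2=4427$ incomplete paratopy classes of $2$-layer latin cuboids of order $5$ — is an arithmetical fact about a finite set, so the only real task is to enumerate that set correctly and then perform the orbit-counting modulo the relevant symmetry group.

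Concretely, I would proceed as follows. First, generate all latin squares of order $5$, or rather one representative from each of the $2$ paratopy classes (Proposition \ref{PPVprop1}), since the first layer of a $2$-layer cuboid may be assumed, up to isotopy, to be one of a small set of canonical latin squares; in fact it is cleanest to fix the first layer to be in reduced form and to range over all reduced latin squares of order $5$ (there are $56$ of them, consistent with Proposition \ref{PPVprop111}). Second, for each choice of first layer, extend it in all possible ways by a second latin square of order $5$ that is row-wise and column-wise ``compatible'' (i.e. the pair of layers forms a latin cuboid: each vertical line of length $2$ must have distinct symbols, which here is automatic, and each layer must individually be latin — so the constraint is simply that the second layer is a latin square of order $5$ distinct from the first in every cell is \emph{not} required; only that the two layers together form a $2$-layer latin cuboid, i.e. both layers are latin squares). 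Third, collect all such ordered pairs of layers, and reduce modulo the paratopy group: isotopies $q!((q-1)!)^{n-1}k!$ together with the $n!=3!$ parastrophies for the full paratopy count, and modulo the incomplete paratopy group (parastrophies fixing the first coordinate, of order $2$ on the two layers together with the $2$ coordinate permutations fixing coordinate $0$) for the $N_2$ count. Canonical-form computation (e.g. via a graph-isomorphism routine such as \texttt{nauty} applied to the natural coloured graph encoding the cuboid) gives a canonical representative per orbit, and the orbit representatives are counted.

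The arithmetic sanity check is the main guard against bugs: the total number of $2$-layer latin cuboids of order $5$ equals $56\cdot 5!\cdot 4!$ (choices of second layer, given a reduced first layer, scaled appropriately) times the number of first layers, and this must equal the sum over the $1625$ paratopy classes of their orbit sizes, each orbit size dividing $5!\cdot(4!)^2\cdot 2\cdot 3!$; similarly the $4427$ incomplete classes must have orbit sizes summing to the same total with each dividing $N_0=5!\cdot(4!)^2\cdot 4$. Performing this consistency check for both the complete and incomplete paratopy counts, and cross-checking against the known count of latin cubes (a $2$-layer cuboid is a sub-object of a latin cube whenever it is extendible, and the extendible ones form a large but computable subset), is how I would gain confidence the enumeration is complete and non-redundant.

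The main obstacle is purely one of implementation correctness rather than mathematical depth: ensuring that the canonical-form routine correctly captures the intended symmetry group (isotopy $\times$ parastrophy, with the subtlety that parastrophies permute the \emph{coordinates} $\{0,1,\dots,n\}$ of the associated $(n+1)$-dimensional $(0,1)$-matrix, not the role of ``layer'' versus ``row'' versus ``column'' in a naive way), and that the two different group actions — full paratopy versus incomplete paratopy — are each implemented without accidentally identifying or splitting orbits. A secondary concern is sheer volume: the raw list of ordered pairs of layers is on the order of $56\cdot 5!\cdot 4!$ objects before reduction, which is well within reach, but the orbit reduction must be done carefully (e.g. by storing canonical forms in a hash set) to avoid double-counting. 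Once the enumeration code is validated on the smaller cases $q=3,4$ (where the answers are independently known or easily recomputed), the count for $q=5$ follows, and the resulting list is exactly the data archive referenced in the introduction.
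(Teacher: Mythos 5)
The paper offers no argument for this proposition beyond the words ``by complete search,'' so at the level of method your proposal --- exhaustive generation followed by orbit reduction under the (incomplete) paratopy group, using canonical forms and arithmetic consistency checks, validated first on $q=3,4$ --- is essentially the same as what the authors did, and your implementation remarks are sensible.

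There is, however, one concrete error in your specification of the set to be enumerated, and it would make your computation return different numbers. You explicitly decide that the two layers need \emph{not} differ in every cell, i.e.\ you propose to enumerate all ordered pairs of latin squares of order $5$. The counts $1625$ and $N_2=4427$ can only refer to genuine latin cuboids of size $2\times5\times5$, in which every line --- including the length-$2$ lines running across the two layers --- carries distinct symbols. This is forced by the paper's own data: by the corollary following Proposition \ref{PPVprop00}, each incomplete paratopy class contains at most $N_0=5!\,(4!)^2\cdot 4=276480$ cuboids, so $4427$ classes account for at most about $1.22\times10^9$ objects, whereas the number of arbitrary ordered pairs of latin squares of order $5$ is $(56\cdot5!\cdot4!)^2\approx 2.6\times10^{10}$, which would necessarily split into at least roughly $9\times10^4$ incomplete classes. (The same list is used in Section 6 to classify nonextendible and noncompletable cuboids of size $2\times5\times5$, which only makes sense for cellwise-distinct pairs, and every representative displayed there is indeed cellwise distinct.) Your parenthetical is also internally inconsistent: you first assert that the vertical lines of length $2$ ``must have distinct symbols, which here is automatic'' (it is not automatic --- two latin squares can agree in a cell), and then assert that cellwise distinctness is not required. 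Restricting the enumeration to ordered pairs of latin squares that disagree in every cell is necessary to reproduce the stated counts; a secondary slip is that the raw list before reduction is not ``on the order of $56\cdot5!\cdot4!$'' objects --- that is the number of single latin squares of order $5$, not of admissible pairs.
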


The list of  representatives of paratopy classes is in the internet
data archive https://zenodo.org/records/10204026 .

We implement two algorithms for finding layer-latin cubes of order
$5$ with no transversals.

\subsection{Algorithm 1}

{\bf for} $i=1$ {\bf to} $i=N_2$

\hskip 10mm Take the $i$th $2$-layer latin cuboid with layers $A_0,
A_1$

\hskip 10mm {\bf for} $j=1$ {\bf to} $j=N_1$

\hskip 15mm Take  the $j$th  latin square  $B_2$.

\hskip 15mm Consider the $3$-layer latin cuboid $A'$ with layers
$A_0, A_1, B_2$.

\hskip 15mm {\bf If} $T(A')$ satisfies the conditions of Corollary
\ref{PPVprop2cube21} or \ref{PPVprop2cube22}

\hskip 20mm {\bf Then} $j:=j+1$

\hskip 20mm {\bf Else for} $k=1$ {\bf to} $k=N_1$

\hskip 25mm Take the $k$th  latin square  $B_3$.

\hskip 25mm  Construct cell by cell a latin square $B_4$  such that

\hskip 25mm  the layer-latin cube with layers $A_0, A_1, B_2, B_3,
B_4$

\hskip 25mm does not contain transversals, $k:=k+1$.

\hskip 20mm $j:=j+1$

\hskip 10mm  $i:=i+1$

\subsection{Algorithm 2}

{\bf for} $i=1$ {\bf to} $i=N_2$

\hskip 10mm Take the $i$th $2$-layer latin cuboid with layers $A_0,
A_1$

\hskip 10mm {\bf for} $j=1$ {\bf to} $j=N_1$

\hskip 15mm Take the $j$th  latin square  $B_2$.

\hskip 15mm Consider the $3$-layer latin cuboid $A'$ with layers
$A_0, A_1, B_2$.

\hskip 15mm {\bf If} $T(A')$ satisfies the conditions of Corollary
\ref{PPVprop2cube21} or \ref{PPVprop2cube22}

\hskip 20mm {\bf Then} $j:=j+1$

\hskip 20mm {\bf Else for} $k=1$ {\bf to} $k=N_2$

\hskip 25mm Take  the $k$th   $2$-layer latin cuboid with layers
$B_3, B_4$.

\hskip 25mm {\bf  for} $m=1$ {\bf to} $m=N_0$

\hskip 30mm  Get $B'_3, B'_4$  from $B_3, B_4$ by $m$th paratopy.

\hskip 30mm Check the existence of transversals in the layer-latin
cube with layers $A_0, A_1, B_2, B'_3, B'_4$.

\hskip 30mm  $m:=m+1$.

\hskip 25mm  $k:=k+1$.

\hskip 20mm $j:=j+1$

\hskip 10mm $i:=i+1$

\bigskip

Using Algorithm 1 and Algorithm 2 independently, we list all
layer-latin cubes of order $5$ with no transversals (see Appendix
1). In particular, we establish the following statement.

\begin{proposition}\label{PPVprop1120}
There are $123$ paratopy classes and $241$  incomplete paratopy
classes of $2$-layer latin cuboids of order $5$.
\end{proposition}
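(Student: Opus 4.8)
The plan is to establish the statement by a complete, independently cross-checked computer enumeration, so the real content of the proof is (i) delimiting a finite search space, (ii) a decision procedure that selects the $2$-layer cuboids to be counted, and (iii) a provably correct reduction to paratopy and to incomplete paratopy classes. The natural starting point is Proposition \ref{PPVprop112}: it already yields explicit representatives of the $N_2=4427$ incomplete paratopy classes (equivalently, of the $1625$ paratopy classes) of \emph{all} $2$-layer latin cuboids of order $5$. Every cuboid that must be counted here is paratopic to exactly one of these representatives, so it suffices to run the decision procedure on the $4427$ stored representatives and then merge those that survive under the appropriate group.

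The cuboids to be counted are precisely those $(A_0,A_1)$ that extend to a layer-latin cube of order $5$ with no transversal, and this membership is exactly what Algorithms 1 and 2 decide. I would run both algorithms independently on each representative. Algorithm 1 loops over the $N_1$ latin squares $B_2$ (Proposition \ref{PPVprop111}) and each $B_3$, and tries to build the final layer $B_4$ cell by cell so that $A_0,A_1,B_2,B_3,B_4$ has no transversal; the tests of Corollaries \ref{PPVprop2cube21} and \ref{PPVprop2cube22} prune the overwhelming majority of the partial completions before that step and are what make the loop feasible. Algorithm 2 is the complementary brute force: it extends $(A_0,A_1,B_2)$ by every $2$-layer cuboid $(B_3,B_4)$ under all $N_0$ paratopies and tests directly for transversals, using none of the structural corollaries. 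A representative is retained exactly when some transversal-free completion is found.

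The retained representatives are then canonicalized twice. Applying the full paratopy group, of order $5!\cdot(4!)^2\cdot 2\cdot 3!$ (Proposition \ref{PPVprop00}), to each retained cuboid and counting distinct canonical forms gives the $123$ paratopy classes; restricting to the subgroup of paratopies fixing the first coordinate, of order $N_0=5!\cdot(4!)^2\cdot 4$, and recounting gives the $241$ incomplete paratopy classes. That $241$ falls slightly below $2\cdot 123$ reflects the few classes stabilized by a coordinate-reversing paratopy, a convenient arithmetic sanity check on the two orbit computations.

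The main obstacle is correctness rather than mathematics: I must guarantee that the canonical-form map neither splits a single class nor identifies two inequivalent cuboids, and that the pruning in Algorithm 1 never discards a cuboid that does admit a transversal-free completion. Both dangers are controlled by the independence of the two searches --- Algorithm 1 leans on Corollaries \ref{PPVprop2cube21} and \ref{PPVprop2cube22}, whereas Algorithm 2 leans on nothing structural --- so their agreement on the identical list of surviving representatives is strong evidence that the pruning is sound and the enumeration exhaustive. What remains is the large but routine bookkeeping of orbit computation under the groups whose orders are recorded in Proposition \ref{PPVprop00}.
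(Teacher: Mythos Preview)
There is a genuine gap, traceable to a typo in the printed statement. The sentence immediately preceding the proposition, the abstract, and Appendix~1 all make clear that the $123$ (and hence the $241$) counts paratopy classes of \emph{layer-latin cubes of order~$5$ with no transversals} --- five-layer objects --- not of $2$-layer cuboids. Your plan discards the completions produced by Algorithms~1 and~2 and canonicalizes only the seed pairs $(A_0,A_1)$ that admit at least one transversal-free extension; the paper instead keeps every transversal-free cube $(A_0,A_1,B_2,B_3,B_4)$ that the algorithms output and canonicalizes those. These are orbit counts on different sets: a single transversal-free layer-latin cube contributes $\binom{5}{2}$ two-layer sub-cuboids that need not be mutually paratopic, and conversely a fixed $(A_0,A_1)$ may complete to several non-paratopic transversal-free cubes. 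There is no a~priori reason your count of seed pairs should equal $123$.

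Your arithmetic sanity check also uses the wrong index. By the Corollary after Proposition~\ref{PPVprop00}, the full paratopy group has order $5!\cdot(4!)^2\cdot 2\cdot 3!$ and the incomplete one order $N_0=5!\cdot(4!)^2\cdot 4$, so the index is $3$, not $2$: the parastrophies are the $3!$ permutations of the three order-$5$ coordinates $\{0,2,3\}$, of which only the $2!$ fixing the value coordinate~$0$ survive in the incomplete group. The correct upper bound on incomplete classes is therefore $3\cdot 123=369$; that the actual number $241$ lies well below this merely reflects that many of the $123$ classes possess a parastrophy exchanging the value coordinate with a spatial one, not that the index is~$2$.
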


 We find out that every layer-latin cube of order $5$ with no
transversals satisfies the conditions of Lemmas \ref{PPVlemma1} or
\ref{PPVlemma2} up to equivalence. Consequently, Theorem
\ref{latinT1} is true.

\section{Nonextendible latin cuboids}

Recall that a latin cuboid $C$ of size $k\times q\times q$, $k<q$,
is called extendible (completable) if there exists a latin cuboid
$(k+1)\times q\times q$ (a latin cube of order $q$) containing $C$.
By definition, every completable  latin cuboid is extendible. But a
noncompletable   latin cuboid may be extendable. It is well known
that by the K\"onig--Hall theorem any latin rectangle of size
$k\times q$ is completable.
 The minimal example of nonextendable
latin cuboid of size $k\times q\times q$ was found by Kochol
\cite{Kochol}. It has size $2\times 5\times 5$ and  is provided in
\cite{MW}.

The following property is  known and  can be verified
straightforwardly.

\begin{proposition}\label{PPVprop100}\quad

$(1)$ Any latin cuboid of size $(q-1)\times q\times q$ or $1\times
q\times q$ is completable to a latin cube of order $q$.

$(2)$ Equivalent latin cuboids are extendible (completable) or
nonextendible (noncompletable) simultaneously.
\end{proposition}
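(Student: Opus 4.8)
The statement is essentially definitional, so the plan is to verify the two parts by elementary means. For part~(1) I would treat the two extreme sizes by explicit constructions. A latin cuboid $C$ of size $1\times q\times q$ is just a single latin square $L_0$, and then $f(x_1,x_2,x_3)=x_1+L_0(x_2,x_3)$ (with $+$ addition modulo $q$ on $Q_q$) defines a latin cube of order $q$: fixing any two of $x_1,x_2,x_3$ leaves a bijection in the third, because $L_0$ is a latin square and $t\mapsto t+c$ is a bijection, and the layer obtained by fixing $x_1=0$ is exactly $L_0$; hence $C$ is completable. If $C$ has size $(q-1)\times q\times q$ with layers $L_0,\dots,L_{q-2}$, then for each cell $(i,j)$ the $q-1$ symbols $L_0(i,j),\dots,L_{q-2}(i,j)$ are pairwise distinct by the line condition of $C$ in the layer direction, so exactly one symbol is missing; define $L_{q-1}(i,j)$ to be that symbol. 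Every line of the resulting $q\times q\times q$ array in the layer direction then contains all of $Q_q$ by construction, and $L_0,\dots,L_{q-2}$ are already latin squares, so it remains to check that $L_{q-1}$ is a latin square. Fixing a row $i$, the $(q-1)\times q$ array whose $k$th row is the $i$th row of $L_k$ has distinct entries in each row (each $L_k$ is a latin square) and in each column (by the line condition of $C$), hence is a latin rectangle on $q-1$ rows; a symbol cannot be missing from two of its columns, since then it would occur in at most $q-2$ cells while occurring once per row, so the missing symbols form a permutation, i.e.\ the $i$th row of $L_{q-1}$ is a permutation. The same argument for columns shows $L_{q-1}$ is a latin square, and $C$ is thereby completed.

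For part~(2) I would show that each of the two kinds of generators of the equivalence relation---isotopies and parastrophies---preserves extendibility, and that the identical argument yields completability. Suppose $C$ of size $k\times q\times q$ is contained in a latin cuboid $D$ of size $(k+1)\times q\times q$ as its first $k$ layers (the completability case is the same, with $D$ a latin cube of order $q$). A parastrophy of a $k$-layer $3$-cuboid permutes among themselves only the symbol coordinate and the two non-layer position coordinates, fixing the layer coordinate; hence it acts verbatim on $(k+1)$-layer cuboids and on latin cubes of order $q$, it preserves the latin-cuboid property, and it acts layerwise as a parastrophy on each layer, so it carries $D$ to a latin cuboid $D'$ whose first $k$ layers are precisely the parastrophic image of $C$. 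An isotopy of $C$ is given by a symbol permutation, a permutation $\sigma_1$ of the layer index set $\{0,\dots,k-1\}$, and permutations of the two remaining coordinates; extending $\sigma_1$ to a permutation of $\{0,\dots,k\}$ fixing $k$ (respectively, to a permutation of $Q_q$ fixing the subset $\{0,\dots,k-1\}$) turns it into an isotopy of $D$, and since $\sigma_1$ permutes $\{0,\dots,k-1\}$ setwise the first $k$ layers of the image coincide with the isotopic image of $C$. Thus each generator sends an extension (completion) of $C$ to an extension (completion) of an equivalent cuboid; composing generators, so does every paratopy, and applying the inverse paratopy gives the converse. This proves~(2).

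There is no genuine obstacle in this proposition; the only points that need care are the observation in part~(1) that the complementary layer $L_{q-1}$ is \emph{forced} to be a latin square---which is the latin-rectangle (equivalently pigeonhole/counting) argument---and, in part~(2), the bookkeeping that a parastrophy of a cuboid never disturbs the layer coordinate and that the layer-permutation component of an isotopy extends to the enlarged index set so as to keep the first $k$ layers in place.
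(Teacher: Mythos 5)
Your proof is correct, and it supplies exactly the "straightforward verification" that the paper omits (the paper states Proposition \ref{PPVprop100} without proof). Part (1) for size $(q-1)\times q\times q$ is the standard latin-rectangle counting argument forcing the complementary layer to be a latin square, and part (2) correctly reduces to checking the two generators of paratopy, with the right observation that a parastrophy of a $k\times q\times q$ cuboid with $k<q$ must fix the layer coordinate and that the layer-permutation component of an isotopy extends to the enlarged index set.
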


By Proposition \ref{PPVprop100},  noncomplitable latin cuboids of
size $1\times 5\times 5$ and $4\times 5\times 5$ do not exist.
Moreover, all extendible latin cuboids of size $3\times 5\times 5$
are completable.

 Using the
list of paratopy classes of $2$-layer latin cuboids of order $5$, we
verify that

$(I)$ There is a unique paratopy class of nonextendible latin
cuboids of size $2\times 5\times 5$. The representative of this
class is

$$
   \begin{array}{ccccccccccc}
0 &1 &2 &3 &4 &\qquad & 1 &0 &4 &2 &3 \\
1 &0 &3 &4 &2 &\qquad & 0 &2 &1 &3 &4 \\
2 &3 &4 &0 &1 &\qquad & 3 &4 &0 &1 &2 \\
3 &4 &1 &2 &0 &\qquad & 4 &3 &2 &0 &1 \\
4 &2 &0 &1 &3 &\qquad & 2 &1 &3 &4 &0
   \end{array}
$$

$(II)$ There are $10$ paratopy classes of extendible but
noncomletable latin cuboids of size $2\times 5\times 5$. The list of
representatives of these paratopy classes presented in the Appendix
2.

$(III)$ There are $69$ paratopy classes of nonextendible latin
cuboids of size $3\times 5\times 5$. The list of representatives of
these paratopy classes can be found in the internet data archive
https://zenodo.org/records/10204026 .

$25$ out of  $69$ paratopy classes  contain $2\times 5\times 5$
cuboids from $10$ noncompletable paratopy classes, and $24$ out of
$69$ paratopy classes do not contain such subcuboids.

\newpage

\section{Appendix 1}

We find $123$ paratopy classes of layer-latin cubes of order $5$
with no transversals. $34$ classes among them can be represented  in
the form of $L[\pi_1,2],\dots, L[\pi_5,2]$ (see Corollary
\ref{corol_pi}), $67$ classes can be represented in the form of
$L_0,L[\varepsilon_{i_1},2],\dots, L[\varepsilon_{i_{4}},2]$ (see
Lemma \ref{PPVlemma2}), and $22$ classes can be represented in the
both ways. Below, the last $22$ classes are represented in the first
way. Thus, we  represent $56$ classes in the first way and the
remaining $67$ classes in the second way.

The first $56 $ classes of layer-latin cubes of order $5$ with no
transversals    can be divided  into three sets.

The first $30$ classes consist of latin squares
$L[\varepsilon_0,2]$, $L[\varepsilon_0,2]$, $L[\varepsilon_1,2]$,
and two latin squares $L[\pi_4,2]$ and $L[\pi_5,2]$ obtained from
$L[\varepsilon_0,2]$ by permutations of rows and columns. In Table
\ref{tab001}, each permutation is given by the first column of the
corresponding latin square. For example, the first row of the table
corresponds to the five squares
$$
   \begin{array}{ccccccccccccccccc}
  0 &1 &2 &3 &4 &\qquad & 0 &1 &2 &3 &4 &\qquad & 1 &2 &3 &4 &0 \\
  1 &2 &3 &4 &0 &\qquad & 1 &2 &3 &4 &0 &\qquad & 2 &3 &4 &0 &1 \\
  2 &3 &4 &0 &1 &\qquad & 2 &3 &4 &0 &1 &\qquad & 3 &4 &0 &1 &2 \\
  3 &4 &0 &1 &2 &\qquad & 3 &4 &0 &1 &2 &\qquad & 4 &0 &1 &2 &3 \\
  4 &0 &1 &2 &3 &\qquad & 4 &0 &1 &2 &3 &\qquad & 0 &1 &2 &3 &4
   \end{array}
$$
$$
   \begin{array}{ccccccccccc}
0 &1 &2 &3 &4 &\qquad & 1 &2 &3 &4 &0 \\
1 &2 &3 &4 &0 &\qquad & 2 &3 &4 &0 &1 \\
2 &3 &4 &0 &1 &\qquad & 3 &4 &0 &1 &2 \\
4 &0 &1 &2 &3 &\qquad & 0 &1 &2 &3 &4 \\
3 &4 &0 &1 &2 &\qquad & 4 &0 &1 &2 &3
   \end{array}
$$

\begin{table}
        \begin{center}

            \caption{Case $\varepsilon_0$, $\varepsilon_0$, $\varepsilon_1$.}\label{tab001}

\begin{tabular}{c|c} \hline
 $\pi_4$ & $\pi_5$  \\ \hline
 $0\ 1\ 2\ 4\ 3$ & $1\ 2\ 3\ 0\ 4 $ \\ \hline
 $0\ 1\ 2\ 4\ 3$ & $2\ 3\ 4\ 1\ 0  $ \\ \hline
 $0\ 1\ 3\ 4\ 2$ & $2\ 1\ 4\ 3\ 0  $ \\ \hline
 $0\ 1\ 4\ 2\ 3$ & $3\ 2\ 4\ 1\ 0  $ \\ \hline
 $0\ 1\ 4\ 3\ 2$ & $0\ 1\ 4\ 3\ 2  $ \\ \hline
 $0\ 1\ 4\ 3\ 2$ & $3\ 4\ 2\ 1\ 0  $ \\ \hline
 $0\ 2\ 1\ 4\ 3$ & $4\ 2\ 3\ 0\ 1  $ \\ \hline
 $0\ 2\ 3\ 4\ 1$ & $0\ 2\ 3\ 4\ 1  $ \\ \hline
 $0\ 2\ 3\ 4\ 1$ & $0\ 2\ 3\ 4\ 1  $ \\ \hline
 $0\ 3\ 1\ 4\ 2$ & $4\ 3\ 2\ 1\ 0  $ \\ \hline
 $0\ 3\ 4\ 1\ 2$ & $4\ 1\ 0\ 3\ 2  $ \\ \hline
 $0\ 3\ 4\ 2\ 1$ & $4\ 2\ 3\ 1\ 0  $ \\ \hline
 $0\ 3\ 4\ 2\ 1$ & $1\ 4\ 0\ 3\ 2  $ \\ \hline
 $0\ 4\ 1\ 2\ 3$ & $3\ 2\ 4\ 0\ 1  $ \\ \hline
 $0\ 4\ 1\ 2\ 3$ & $4\ 3\ 0\ 1\ 2  $ \\ \hline
 $1\ 0\ 3\ 4\ 2$ & $4\ 3\ 1\ 2\ 0  $ \\ \hline
 $1\ 0\ 4\ 2\ 3$ & $1\ 0\ 4\ 2\ 3  $ \\ \hline
 $1\ 4\ 0\ 2\ 3$ & $3\ 0\ 4\ 2\ 1  $ \\ \hline
 $2\ 4\ 0\ 1\ 3$ & $2\ 4\ 0\ 1\ 3  $ \\ \hline  \hline
 $2\ 3\ 4\ 0\ 1$ & $1\ 4\ 2\ 3\ 0 $ \\ \hline
 $2\ 3\ 4\ 0\ 1$ & $1\ 4\ 3\ 2\ 0 $ \\ \hline
 $2\ 3\ 4\ 0\ 1$ & $3\ 4\ 1\ 2\ 0 $ \\ \hline
 $2\ 3\ 4\ 0\ 1$ & $3\ 4\ 2\ 1\ 0 $ \\ \hline
 $2\ 3\ 4\ 0\ 1$ & $4\ 1\ 2\ 3\ 0 $ \\ \hline
 $2\ 3\ 4\ 0\ 1$ & $4\ 1\ 3\ 2\ 0 $ \\ \hline
 $3\ 4\ 0\ 1\ 2$ & $2\ 1\ 4\ 0\ 3 $ \\ \hline
 $3\ 4\ 0\ 1\ 2$ & $4\ 1\ 2\ 0\ 3 $ \\ \hline
 $4\ 0\ 1\ 2\ 3$ & $1\ 2\ 0\ 4\ 3 $ \\ \hline
 $4\ 0\ 1\ 2\ 3$ & $1\ 2\ 4\ 0\ 3 $ \\ \hline
 $4\ 0\ 1\ 2\ 3$ & $1\ 3\ 0\ 4\ 2 $ \\ \hline
 $4\ 0\ 1\ 2\ 3$ & $1\ 3\ 4\ 0\ 2 $ \\ \hline
\end{tabular}

        \end{center}
    \end{table}

\begin{table}
        \begin{center}

            \caption{Case $\varepsilon_0$, $\varepsilon_0$, $\varepsilon_0$.}\label{tab000}

\begin{tabular}{c|c} \hline
 $\pi_4$ & $\pi_5$  \\ \hline
 $0\ 1\ 2\ 4\ 3$ & $2\ 3\ 4\ 1\ 0 $ \\ \hline
 $0\ 1\ 3\ 4\ 2$ & $3\ 2\ 0\ 4\ 1 $ \\ \hline
 $0\ 2\ 1\ 4\ 3$ & $0\ 3\ 4\ 1\ 2 $ \\ \hline
 $0\ 2\ 3\ 4\ 1$ & $0\ 2\ 3\ 4\ 1 $ \\ \hline
 $0\ 2\ 3\ 4\ 1$ & $1\ 3\ 4\ 0\ 2 $ \\ \hline
 $0\ 3\ 1\ 4\ 2$ & $0\ 4\ 3\ 2\ 1 $ \\ \hline \hline
 $2\ 3\ 4\ 0\ 1$ & $1\ 2\ 4\ 3\ 0 $ \\ \hline
 $2\ 3\ 4\ 0\ 1$ & $1\ 3\ 4\ 2\ 0 $ \\ \hline
 $2\ 3\ 4\ 0\ 1$ & $2\ 1\ 4\ 3\ 0 $ \\ \hline
 $2\ 3\ 4\ 0\ 1$ & $4\ 1\ 2\ 3\ 0 $ \\ \hline
 $2\ 3\ 4\ 0\ 1$ & $4\ 1\ 3\ 2\ 0 $ \\ \hline
 $2\ 3\ 4\ 0\ 1$ & $4\ 3\ 1\ 2\ 0 $ \\ \hline
 $3\ 4\ 0\ 1\ 2$ & $3\ 4\ 2\ 1\ 0 $ \\ \hline
 $3\ 4\ 0\ 1\ 2$ & $1\ 4\ 2\ 3\ 0 $ \\ \hline
\end{tabular}

        \end{center}
    \end{table}

\begin{table}
        \begin{center}

            \caption{Case $\varepsilon_0$, $\varepsilon_1$, $\varepsilon_2$.}\label{tab012}

\begin{tabular}{c|c} \hline
 $\pi_4$ & $\pi_5$  \\ \hline
 $0\ 1\ 2\ 4\ 3$ & $0\ 1\ 2\ 4\ 3 $ \\ \hline
 $0\ 1\ 2\ 4\ 3$ & $4\ 0\ 1\ 3\ 2 $ \\ \hline
 $0\ 1\ 3\ 4\ 2$ & $0\ 4\ 2\ 1\ 3 $ \\ \hline
 $0\ 1\ 4\ 2\ 3$ & $1\ 0\ 2\ 4\ 3 $ \\ \hline
 $0\ 1\ 4\ 3\ 2$ & $1\ 2\ 0\ 4\ 3 $ \\ \hline
 $0\ 1\ 4\ 3\ 2$ & $3\ 4\ 2\ 1\ 0 $ \\ \hline
 $0\ 2\ 3\ 4\ 1$ & $2\ 4\ 0\ 1\ 3 $ \\ \hline
 $0\ 2\ 4\ 3\ 1$ & $4\ 2\ 0\ 1\ 3 $ \\ \hline
 $0\ 3\ 1\ 4\ 2$ & $2\ 1\ 0\ 4\ 3 $ \\ \hline
 $0\ 4\ 3\ 1\ 2$ & $0\ 4\ 3\ 1\ 2 $ \\ \hline \hline
 $3\ 4\ 0\ 1\ 2$ & $1\ 2\ 3\ 0\ 4 $ \\ \hline
 $3\ 4\ 0\ 1\ 2$ & $1\ 3\ 2\ 0\ 4 $ \\ \hline
\end{tabular}

        \end{center}
    \end{table}

Other $14$ layer-latin cubes with no transversals consist of
$L[\varepsilon_0,2]$, $L[\varepsilon_0,2]$, $L[\varepsilon_0,2]$,
and $L[\pi_4,2]$, $L[\pi_5,2]$ layers. Two last  latin squares
$L[\pi_4,2]$ and $L[\pi_5,2]$ are represented in Table \ref{tab000}
by permutations of rows.

The remaining $12$ layer-latin cubes with no transversals consist of
$L[\varepsilon_0,2]$, $L[\varepsilon_1,2]$, $L[\varepsilon_2,2]$,
and  $L[\pi_4,2]$, $L[\pi_5,2]$ layers. Two last  latin squares
$L[\pi_4,2]$ and  $L[\pi_5,2]$ are represented in Table
\ref{tab012}.

 Tables \ref{tab001}, \ref{tab000}, \ref{tab012} are divided into
two parts. In the lower part there are $22$ classes, which can be
represented by the second way.

The remaining $67$ classes of layer-latin cubes of order $5$ with no
transversals can be  presented as $L_0,L[\varepsilon_{i_1},2],\dots,
L[\varepsilon_{i_{4}},2]$ (see Lemma \ref{PPVlemma2}). We divide
these classes into sets by the values $i_1,\dots, i_{4}$.

When $i_1=i_2=i_3=i_4=0$, we have $3$ classes obtained by adding
$L_0$ from the following set of nonlinear squares
$$
   \begin{array}{ccccccccccccccccc}
  1 &0 &3 &4 &2 &\qquad & 1 &0 &3 &4 &2 &\qquad & 1 &0 &3 &4 &2 \\
  0 &3 &1 &2 &4 &\qquad & 0 &4 &1 &2 &3 &\qquad & 0 &4 &1 &2 &3 \\
  4 &2 &0 &1 &3 &\qquad & 3 &2 &0 &1 &4 &\qquad & 3 &2 &0 &1 &4 \\
  2 &1 &4 &3 &0 &\qquad & 4 &1 &2 &3 &0 &\qquad & 4 &3 &2 &0 &1 \\
  3 &4 &2 &0 &1 &\qquad & 2 &3 &4 &0 &1 &\qquad & 2 &1 &4 &3 &0
   \end{array}
$$
and $4$ classes obtained by adding $L_0$ from the following set of
linear squares
$$
   \begin{array}{ccccccccccccccccccccccc}
  1 &0 &3 &4 &2 &\qquad & 1 &2 &3 &4 &0 &\qquad & 1 &2 &4 &0 &3 &\qquad & 1 &2 &4 &0 &3 \\
  0 &3 &4 &2 &1 &\qquad & 2 &3 &4 &0 &1 &\qquad & 2 &0 &1 &3 &4 &\qquad & 2 &3 &0 &1 &4 \\
  3 &4 &2 &1 &0 &\qquad & 3 &4 &0 &1 &2 &\qquad & 3 &4 &0 &1 &2 &\qquad & 3 &4 &1 &2 &0 \\
  4 &2 &1 &0 &3 &\qquad & 4 &0 &1 &2 &3 &\qquad & 4 &1 &3 &2 &0 &\qquad & 4 &0 &2 &3 &1 \\
  2 &1 &0 &3 &4 &\qquad & 0 &1 &2 &3 &4 &\qquad & 0 &3 &2 &4 &1 &\qquad & 0 &1 &3 &4 &2
   \end{array}
$$

When $i_1=i_2=i_3=0, \ i_4=1$, we have $12$ classes obtained by
adding $L_0$ from the following set of nonlinear squares
$$
   \begin{array}{ccccccccccccccccc}
  0 &1 &2 &3 &4 &\qquad & 0 &1 &3 &4 &2 &\qquad & 0 &1 &3 &4 &2 \\
  1 &2 &4 &0 &3 &\qquad & 2 &0 &4 &1 &3 &\qquad & 2 &3 &4 &1 &0 \\
  3 &4 &0 &2 &1 &\qquad & 3 &4 &2 &0 &1 &\qquad & 3 &4 &0 &2 &1 \\
  4 &0 &3 &1 &2 &\qquad & 1 &2 &0 &3 &4 &\qquad & 1 &0 &2 &3 &4 \\
  2 &3 &1 &4 &0 &\qquad & 4 &3 &1 &2 &0 &\qquad & 4 &2 &1 &0 &3
   \end{array}
$$
$$
   \begin{array}{ccccccccccccccccc}
  0 &1 &3 &4 &2 &\qquad & 0 &1 &3 &4 &2 &\qquad & 0 &1 &3 &4 &2 \\
  2 &4 &0 &1 &3 &\qquad & 2 &4 &0 &1 &3 &\qquad & 2 &4 &1 &0 &3 \\
  3 &0 &4 &2 &1 &\qquad & 4 &3 &2 &0 &1 &\qquad & 4 &0 &2 &3 &1 \\
  4 &2 &1 &3 &0 &\qquad & 3 &0 &1 &2 &4 &\qquad & 3 &2 &0 &1 &4 \\
  1 &3 &2 &0 &4 &\qquad & 1 &2 &4 &3 &0 &\qquad & 1 &3 &4 &2 &0
   \end{array}
$$
$$
   \begin{array}{ccccccccccccccccc}
  0 &1 &3 &4 &2 &\qquad & 0 &1 &4 &3 &2 &\qquad & 0 &1 &4 &3 &2 \\
  3 &4 &1 &2 &0 &\qquad & 2 &0 &1 &4 &3 &\qquad & 2 &0 &1 &4 &3 \\
  4 &0 &2 &1 &3 &\qquad & 3 &4 &0 &2 &1 &\qquad & 4 &3 &0 &2 &1 \\
  1 &2 &0 &3 &4 &\qquad & 4 &2 &3 &1 &0 &\qquad & 3 &4 &2 &1 &0 \\
  2 &3 &4 &0 &1 &\qquad & 1 &3 &2 &0 &4 &\qquad & 1 &2 &3 &0 &4
   \end{array}
$$
$$
   \begin{array}{ccccccccccccccccc}
  0 &1 &4 &3 &2 &\qquad & 0 &2 &3 &4 &1 &\qquad & 2 &3 &0 &4 &1 \\
  2 &3 &0 &4 &1 &\qquad & 3 &0 &4 &1 &2 &\qquad & 1 &0 &4 &2 &3 \\
  4 &0 &1 &2 &3 &\qquad & 4 &1 &0 &2 &3 &\qquad & 3 &4 &1 &0 &2 \\
  3 &4 &2 &1 &0 &\qquad & 1 &4 &2 &3 &0 &\qquad & 4 &2 &3 &1 &0 \\
  1 &2 &3 &0 &4 &\qquad & 2 &3 &1 &0 &4 &\qquad & 0 &1 &2 &3 &4
   \end{array}
$$
and $6$ classes obtained by adding $L_0$ from the set of following
linear squares
$$
   \begin{array}{ccccccccccccccccc}
  0 &1 &2 &3 &4 &\qquad & 0 &1 &3 &4 &2 &\qquad & 0 &2 &3 &1 &4 \\
  1 &3 &0 &4 &2 &\qquad & 1 &4 &0 &2 &3 &\qquad & 1 &3 &0 &4 &2 \\
  2 &0 &4 &1 &3 &\qquad & 3 &0 &2 &1 &4 &\qquad & 3 &4 &2 &0 &1 \\
  3 &4 &1 &2 &0 &\qquad & 4 &2 &1 &3 &0 &\qquad & 4 &0 &1 &2 &3 \\
  4 &2 &3 &0 &1 &\qquad & 2 &3 &4 &0 &1 &\qquad & 2 &1 &4 &3 &0
   \end{array}
$$
$$
   \begin{array}{ccccccccccccccccc}
  0 &3 &4 &1 &2 &\qquad & 1 &2 &3 &4 &0 &\qquad & 2 &3 &4 &0 &1 \\
  2 &4 &1 &0 &3 &\qquad & 2 &3 &4 &0 &1 &\qquad & 3 &4 &0 &1 &2 \\
  4 &0 &2 &3 &1 &\qquad & 3 &4 &0 &1 &2 &\qquad & 4 &0 &1 &2 &3 \\
  3 &1 &0 &2 &4 &\qquad & 4 &0 &1 &2 &3 &\qquad & 0 &1 &2 &3 &4 \\
  1 &2 &3 &4 &0 &\qquad & 0 &1 &2 &3 &4 &\qquad & 1 &2 &3 &4 &0
   \end{array}
$$

When $i_1=i_2=0, \ i_3=i_4=1$, we have $6$ classes obtained by
adding $L_0$ from the following set of nonlinear squares
$$
   \begin{array}{ccccccccccccccccc}
  0 &1 &2 &3 &4 &\qquad & 0 &1 &2 &4 &3 &\qquad & 0 &1 &2 &4 &3 \\
  1 &3 &0 &4 &2 &\qquad & 1 &2 &0 &3 &4 &\qquad & 1 &2 &3 &0 &4 \\
  2 &4 &1 &0 &3 &\qquad & 3 &4 &1 &2 &0 &\qquad & 4 &3 &0 &1 &2 \\
  3 &0 &4 &2 &1 &\qquad & 2 &3 &4 &0 &1 &\qquad & 2 &0 &4 &3 &1 \\
  4 &2 &3 &1 &0 &\qquad & 4 &0 &3 &1 &2 &\qquad & 3 &4 &1 &2 &0
   \end{array}
$$
$$
   \begin{array}{ccccccccccccccccc}
  0 &1 &2 &4 &3 &\qquad & 0 &1 &2 &4 &3 &\qquad & 0 &1 &2 &4 &3 \\
  1 &4 &0 &3 &2 &\qquad & 1 &4 &3 &0 &2 &\qquad & 2 &3 &0 &1 &4 \\
  3 &0 &4 &2 &1 &\qquad & 4 &3 &1 &2 &0 &\qquad & 1 &4 &3 &0 &2 \\
  2 &3 &1 &0 &4 &\qquad & 2 &0 &4 &3 &1 &\qquad & 3 &0 &4 &2 &1 \\
  4 &2 &3 &1 &0 &\qquad & 3 &2 &0 &1 &4 &\qquad & 4 &2 &1 &3 &0
   \end{array}
$$
and $7$ classes obtained by adding $L_0$ from the following set of
linear squares
$$
   \begin{array}{ccccccccccccccccccccccc}
  0 &1 &2 &4 &3 &\qquad & 0 &1 &4 &2 &3 &\qquad & 0 &1 &4 &2 &3 &\qquad & 0 &2 &1 &4 &3 \\
  1 &2 &3 &0 &4 &\qquad & 1 &2 &0 &3 &4 &\qquad & 1 &3 &2 &0 &4 &\qquad & 1 &3 &2 &0 &4 \\
  2 &3 &4 &1 &0 &\qquad & 2 &3 &1 &4 &0 &\qquad & 2 &0 &3 &4 &1 &\qquad & 2 &4 &3 &1 &0 \\
  3 &4 &0 &2 &1 &\qquad & 3 &4 &2 &0 &1 &\qquad & 3 &4 &0 &1 &2 &\qquad & 3 &0 &4 &2 &1 \\
  4 &0 &1 &3 &2 &\qquad & 4 &0 &3 &1 &2 &\qquad & 4 &2 &1 &3 &0 &\qquad & 4 &1 &0 &3 &2
   \end{array}
$$
$$
   \begin{array}{ccccccccccccccccc}
  0 &2 &1 &4 &3 &\qquad & 0 &3 &4 &2 &1 &\qquad & 2 &3 &4 &0 &1 \\
  1 &4 &2 &3 &0 &\qquad & 1 &4 &0 &3 &2 &\qquad & 3 &4 &0 &1 &2 \\
  4 &0 &3 &1 &2 &\qquad & 2 &0 &1 &4 &3 &\qquad & 4 &0 &1 &2 &3 \\
  2 &3 &4 &0 &1 &\qquad & 3 &1 &2 &0 &4 &\qquad & 0 &1 &2 &3 &4 \\
  3 &1 &0 &2 &4 &\qquad & 4 &2 &3 &1 &0 &\qquad & 1 &2 &3 &4 &0
   \end{array}
$$

When $i_1=i_2=0, \ i_3=1, \ i_4=2$, we have $12$ classes obtained by
adding $L_0$ from the following set of nonlinear squares
$$
   \begin{array}{ccccccccccccccccc}
  0 &1 &2 &4 &3 &\qquad & 0 &1 &2 &4 &3 &\qquad & 0 &1 &2 &4 &3 \\
  1 &0 &3 &2 &4 &\qquad & 1 &2 &3 &0 &4 &\qquad & 2 &0 &1 &3 &4 \\
  3 &2 &4 &1 &0 &\qquad & 2 &4 &0 &3 &1 &\qquad & 1 &3 &4 &0 &2 \\
  4 &3 &1 &0 &2 &\qquad & 4 &3 &1 &2 &0 &\qquad & 4 &2 &3 &1 &0 \\
  2 &4 &0 &3 &1 &\qquad & 3 &0 &4 &1 &2 &\qquad & 3 &4 &0 &2 &1
   \end{array}
$$
$$
   \begin{array}{ccccccccccccccccc}
  0 &1 &2 &4 &3 &\qquad & 0 &1 &2 &4 &3 &\qquad & 0 &1 &2 &4 &3 \\
  2 &0 &4 &3 &1 &\qquad & 2 &0 &4 &3 &1 &\qquad & 4 &0 &3 &2 &1 \\
  1 &2 &3 &0 &4 &\qquad & 3 &2 &0 &1 &4 &\qquad & 1 &2 &0 &3 &4 \\
  4 &3 &1 &2 &0 &\qquad & 1 &4 &3 &2 &0 &\qquad & 2 &3 &4 &1 &0 \\
  3 &4 &0 &1 &2 &\qquad & 4 &3 &1 &0 &2 &\qquad & 3 &4 &1 &0 &2
   \end{array}
$$
$$
   \begin{array}{ccccccccccccccccc}
  0 &1 &2 &4 &3 &\qquad & 0 &1 &3 &4 &2 &\qquad & 0 &1 &3 &4 &2 \\
  4 &0 &3 &2 &1 &\qquad & 2 &0 &1 &3 &4 &\qquad & 4 &0 &1 &2 &3 \\
  1 &4 &0 &3 &2 &\qquad & 3 &4 &2 &1 &0 &\qquad & 1 &2 &0 &3 &4 \\
  3 &2 &4 &1 &0 &\qquad & 1 &2 &4 &0 &3 &\qquad & 2 &3 &4 &1 &0 \\
  2 &3 &1 &0 &4 &\qquad & 4 &3 &0 &2 &1 &\qquad & 3 &4 &2 &0 &1
   \end{array}
$$
$$
   \begin{array}{ccccccccccccccccc}
  0 &2 &1 &4 &3 &\qquad & 1 &0 &3 &4 &2 &\qquad & 0 &1 &2 &3 &4 \\
  4 &0 &3 &2 &1 &\qquad & 4 &2 &1 &0 &3 &\qquad & 4 &2 &1 &0 &3 \\
  1 &4 &0 &3 &2 &\qquad & 2 &3 &4 &1 &0 &\qquad & 2 &3 &0 &4 &1 \\
  2 &3 &4 &1 &0 &\qquad & 3 &4 &0 &2 &1 &\qquad & 1 &4 &3 &2 &0 \\
  3 &1 &2 &0 &4 &\qquad & 0 &1 &2 &3 &4 &\qquad & 3 &0 &4 &1 &2
   \end{array}
$$
and $5$ classes obtained by adding $L_0$ from the following set of
linear squares
$$
   \begin{array}{ccccccccccccccccc}
  0 &1 &2 &3 &4 &\qquad & 0 &1 &3 &4 &2 &\qquad & 0 &2 &1 &4 &3 \\
  1 &2 &4 &0 &3 &\qquad & 1 &3 &2 &0 &4 &\qquad & 2 &1 &3 &0 &4 \\
  2 &4 &3 &1 &0 &\qquad & 3 &2 &4 &1 &0 &\qquad & 3 &4 &0 &1 &2 \\
  3 &0 &1 &4 &2 &\qquad & 4 &0 &1 &2 &3 &\qquad & 4 &0 &2 &3 &1 \\
  4 &3 &0 &2 &1 &\qquad & 2 &4 &0 &3 &1 &\qquad & 1 &3 &4 &2 &0
   \end{array}
$$
$$
   \begin{array}{ccccccccccc}
0 &4 &1 &3 &2 &\qquad & 3 &4 &0 &1 &2 \\
1 &0 &3 &2 &4 &\qquad & 4 &0 &1 &2 &3 \\
3 &1 &2 &4 &0 &\qquad & 0 &1 &2 &3 &4 \\
4 &2 &0 &1 &3 &\qquad & 1 &2 &3 &4 &0 \\
2 &3 &4 &0 &1 &\qquad & 2 &3 &4 &0 &1
   \end{array}
$$

When $i_1=i_2=0, \ i_3=1, \ i_4=4$, we have $6$ classes obtained by
adding $L_0$ from the following set of nonlinear squares
$$
   \begin{array}{ccccccccccccccccc}
  1 &0 &3 &4 &2 &\qquad & 1 &0 &3 &4 &2 &\qquad & 1 &0 &3 &4 &2 \\
  0 &3 &1 &2 &4 &\qquad & 0 &4 &1 &2 &3 &\qquad & 0 &4 &1 &2 &3 \\
  4 &2 &0 &1 &3 &\qquad & 3 &2 &0 &1 &4 &\qquad & 3 &2 &0 &1 &4 \\
  2 &1 &4 &3 &0 &\qquad & 4 &1 &2 &3 &0 &\qquad & 4 &3 &2 &0 &1 \\
  3 &4 &2 &0 &1 &\qquad & 2 &3 &4 &0 &1 &\qquad & 2 &1 &4 &3 &0
   \end{array}
$$
$$
   \begin{array}{ccccccccccccccccc}
  1 &0 &3 &4 &2 &\qquad & 1 &0 &3 &4 &2 &\qquad & 3 &4 &0 &2 &1 \\
  2 &3 &0 &1 &4 &\qquad & 2 &4 &0 &1 &3 &\qquad & 0 &1 &2 &3 &4 \\
  0 &4 &1 &2 &3 &\qquad & 4 &1 &2 &3 &0 &\qquad & 1 &0 &3 &4 &2 \\
  4 &1 &2 &3 &0 &\qquad & 0 &3 &1 &2 &4 &\qquad & 4 &2 &1 &0 &3 \\
  3 &2 &4 &0 &1 &\qquad & 3 &2 &4 &0 &1 &\qquad & 2 &3 &4 &1 &0
   \end{array}
$$
and $2$ classes obtained by adding $L_0$ from the following  set of
linear squares
$$
   \begin{array}{ccccccccccc}
1 &2 &4 &0 &3 &\qquad & 1 &3 &4 &2 &0 \\
2 &0 &1 &3 &4 &\qquad & 2 &4 &1 &0 &3 \\
3 &4 &0 &1 &2 &\qquad & 4 &0 &3 &1 &2 \\
4 &1 &3 &2 &0 &\qquad & 0 &1 &2 &3 &4 \\
0 &3 &2 &4 &1 &\qquad & 3 &2 &0 &4 &1
   \end{array}
$$

When $i_1=0, \ i_2=1, \ i_3=2, \ i_4=3$, we have $3$ classes
obtained by adding $L_0$ from the following set of nonlinear squares
$$
   \begin{array}{ccccccccccccccccc}
  0 &1 &2 &3 &4 &\qquad & 0 &3 &2 &1 &4 &\qquad & 0 &3 &2 &1 &4 \\
  2 &4 &0 &1 &3 &\qquad & 2 &0 &4 &3 &1 &\qquad & 2 &4 &0 &3 &1 \\
  4 &3 &1 &2 &0 &\qquad & 4 &1 &3 &2 &0 &\qquad & 1 &2 &4 &0 &3 \\
  3 &2 &4 &0 &1 &\qquad & 3 &4 &1 &0 &2 &\qquad & 3 &0 &1 &4 &2 \\
  1 &0 &3 &4 &2 &\qquad & 1 &2 &0 &4 &3 &\qquad & 4 &1 &3 &2 &0
   \end{array}
$$
and $1$ class with the linear square $L_0=
   \begin{array}{ccccc}
0 &1 &2 &3 &4  \\
2 &0 &4 &1 &3  \\
4 &2 &3 &0 &1  \\
1 &3 &0 &4 &2  \\
3 &4 &1 &2 &0
   \end{array}
$


\section{Appendix 2}

The list of $10$ representatives of paratopy classes of extendible
but noncomletable latin cuboids of size $2 \times 5 \times 5$.

$$
   \begin{array}{cccccccccccc}
0 &1 &2 &3 &4 &\qquad & 1 &0 &3 &4 &2 & \\
1 &0 &3 &4 &2 &\qquad & 2 &3 &1 &0 &4 & \\
2 &3 &4 &0 &1 &\qquad & 0 &4 &2 &1 &3 & \qquad \text{may be completed to a latin cuboid} \\
3 &4 &1 &2 &0 &\qquad & 4 &2 &0 &3 &1 & \qquad \text{of size $3 \times 5 \times 5$ by $2$ ways,}\\
4 &2 &0 &1 &3 &\qquad & 3 &1 &4 &2 &0 &
   \end{array}
$$
$$
   \begin{array}{cccccccccccc}
0 &1 &2 &3 &4 &\qquad & 1 &0 &4 &2 &3 & \\
1 &0 &3 &4 &2 &\qquad & 2 &3 &0 &1 &4 & \\
2 &3 &4 &0 &1 &\qquad & 0 &4 &1 &3 &2 & \qquad \text{ may be completed to a latin cuboid} \\
3 &4 &1 &2 &0 &\qquad & 4 &2 &3 &0 &1 & \qquad \text{of size $3 \times 5 \times 5$ by 6 ways,}\\
4 &2 &0 &1 &3 &\qquad & 3 &1 &2 &4 &0 &
   \end{array}
$$
$$
   \begin{array}{cccccccccccc}
0 &1 &2 &3 &4 &\qquad & 1 &0 &4 &2 &3 & \\
1 &0 &3 &4 &2 &\qquad & 2 &3 &0 &1 &4 & \\
2 &3 &4 &0 &1 &\qquad & 3 &1 &2 &4 &0 & \qquad \text{may be completed  to a latin cuboid} \\
3 &4 &1 &2 &0 &\qquad & 4 &2 &3 &0 &1 & \qquad \text{of size $3 \times 5 \times 5$ by 6 ways,}\\
4 &2 &0 &1 &3 &\qquad & 0 &4 &1 &3 &2 &
   \end{array}
$$
$$
   \begin{array}{cccccccccccc}
0 &1 &2 &3 &4 &\qquad & 1 &0 &4 &2 &3 & \\
1 &0 &3 &4 &2 &\qquad & 2 &3 &0 &1 &4 & \\
2 &3 &4 &0 &1 &\qquad & 3 &2 &1 &4 &0 & \qquad \text{may be completed  to a latin cuboid} \\
3 &4 &1 &2 &0 &\qquad & 4 &1 &3 &0 &2 & \qquad \text{of size $3 \times 5 \times 5$ by 4 ways,}\\
4 &2 &0 &1 &3 &\qquad & 0 &4 &2 &3 &1 &
   \end{array}
$$
$$
   \begin{array}{cccccccccccc}
0 &1 &2 &3 &4 &\qquad & 1 &0 &4 &2 &3 & \\
1 &0 &3 &4 &2 &\qquad & 2 &3 &1 &0 &4 & \\
2 &3 &4 &0 &1 &\qquad & 3 &4 &2 &1 &0 & \qquad \text{may be completed  to a latin cuboid} \\
3 &4 &1 &2 &0 &\qquad & 4 &2 &0 &3 &1 & \qquad \text{of size $3 \times 5 \times 5$ by 2 ways,}\\
4 &2 &0 &1 &3 &\qquad & 0 &1 &3 &4 &2 &
   \end{array}
$$
$$
   \begin{array}{cccccccccccc}
0 &1 &2 &3 &4 &\qquad & 1 &2 &0 &4 &3 & \\
1 &0 &3 &4 &2 &\qquad & 0 &3 &2 &1 &4 & \\
2 &3 &4 &0 &1 &\qquad & 4 &1 &3 &2 &0 & \qquad \text{may be completed  to a latin cuboid} \\
3 &4 &1 &2 &0 &\qquad & 2 &0 &4 &3 &1 & \qquad \text{of size $3 \times 5 \times 5$ by 6 ways,}\\
4 &2 &0 &1 &3 &\qquad & 3 &4 &1 &0 &2 &
   \end{array}
$$
$$
   \begin{array}{cccccccccccc}
0 &1 &2 &3 &4 &\qquad & 1 &2 &0 &4 &3 & \\
1 &0 &3 &4 &2 &\qquad & 0 &3 &4 &2 &1 & \\
2 &3 &4 &0 &1 &\qquad & 3 &0 &2 &1 &4 & \qquad \text{may be completed  to a latin cuboid} \\
3 &4 &1 &2 &0 &\qquad & 4 &1 &3 &0 &2 & \qquad \text{of size $3 \times 5 \times 5$ by 6 ways,}\\
4 &2 &0 &1 &3 &\qquad & 2 &4 &1 &3 &0 &
   \end{array}
$$
$$
   \begin{array}{cccccccccccc}
0 &1 &2 &3 &4 &\qquad & 1 &2 &0 &4 &3 & \\
1 &0 &3 &4 &2 &\qquad & 2 &3 &4 &1 &0 & \\
2 &3 &4 &0 &1 &\qquad & 0 &1 &3 &2 &4 & \qquad \text{may be completed  to a latin cuboid} \\
3 &4 &1 &2 &0 &\qquad & 4 &2 &2 &3 &1 & \qquad \text{of size $3 \times 5 \times 5$ by 12 ways,}\\
4 &2 &0 &1 &3 &\qquad & 3 &4 &1 &0 &2 &
   \end{array}
$$
$$
   \begin{array}{cccccccccccc}
0 &1 &2 &3 &4 &\qquad & 1 &2 &0 &4 &3 & \\
1 &0 &3 &4 &2 &\qquad & 2 &3 &4 &1 &0 & \\
2 &3 &4 &0 &1 &\qquad & 3 &0 &1 &2 &4 & \qquad \text{may be completed  to a latin cuboid} \\
3 &4 &1 &2 &0 &\qquad & 4 &1 &3 &0 &2 & \qquad \text{of size $3 \times 5 \times 5$ by 3 ways,}\\
4 &2 &0 &1 &3 &\qquad & 0 &4 &2 &3 &1 &
   \end{array}
$$
$$
   \begin{array}{cccccccccccc}
0 &1 &2 &3 &4 &\qquad & 1 &2 &0 &4 &3 & \\
1 &0 &3 &4 &2 &\qquad & 4 &3 &2 &1 &0 & \\
2 &3 &4 &0 &1 &\qquad & 0 &1 &3 &2 &4 & \qquad \text{may be completed  to a latin cuboid} \\
3 &4 &1 &2 &0 &\qquad & 2 &0 &4 &3 &1 & \qquad \text{of size $3 \times 5 \times 5$ by 2 ways.}\\
4 &2 &0 &1 &3 &\qquad & 3 &4 &1 &0 &2 &
   \end{array}
$$

\newpage


\end{document}

By definitions, we obtain the following proposition.

\begin{proposition}\label{PPVprop00}
The number of isotopies of a $k$-layer $n$-cuboid of order $q$
equals $q!((q-1)!)^n$. If $q\neq k$, then the number of
parastrophies of a $k$-layer $n$-cuboid of order $q$ equals
$(n-1)!$. If $q= k$, then the number of parastrophies of a $k$-layer
$n$-cuboid of order $q$ equals $n!$.
\end{proposition}

 A transversal in a {row-latin
rectangle} of size $k\times q$, $k\geq q$, is a set of $q$ cells,
where each from $q$ columns contains one cell, any row contains one
cell at most,  and there is one cell containing each symbol. We
generalize the definition of transversals to $k$-layer latin
$n$-cuboid of order $q$, $k\geq q$. A set of cells is called a
transversal if the set of cells intersects  some $q$ layers and all
hyperplanes of the cuboid, moreover, there is one cell containing
each symbol.

Note that for any even $q$ an analogous statement is trivial because
the Cayley table of iterated group $\mathbb{Z}_{q}$ does not have
transversal in the case of odd dimensions (see \cite{AAT16}).